\documentclass[11pt]{amsart}
\usepackage{amsmath,amsfonts,amsthm,amsopn,amssymb,latexsym,soul}
\usepackage{cite}
\usepackage{color,enumitem,graphicx}
\usepackage[colorlinks=true,urlcolor=blue,
citecolor=red,linkcolor=blue,linktocpage,pdfpagelabels,
bookmarksnumbered,bookmarksopen]{hyperref}
\usepackage[english]{babel}
\usepackage[left=2.61cm,right=2.61cm,top=2.72cm,bottom=2.72cm]{geometry}
\usepackage[hyperpageref]{backref}
\usepackage[colorinlistoftodos]{todonotes}

\makeatletter
\providecommand\@dotsep{5}
\def\listtodoname{List of Todos}
\def\listoftodos{\@starttoc{tdo}\listtodoname}
\makeatother

\numberwithin{equation}{section}
\newtheorem{theorem}{Theorem}[section]
\newtheorem{lemma}[theorem]{Lemma}

\newtheorem{definition}[theorem]{Definition}
\newtheorem{proposition}[theorem]{Proposition}
\newtheorem{remark}[theorem]{Remark}
\newtheorem{corollary}[theorem]{Corollary}
\newcommand{\eps}{\varepsilon}

\newcommand{\R}{\mathbb{R}}

\newcommand{\RN}{{\mathbb{R}^N}}
\newcommand{\RT}{{\mathbb{R}^3}}

\newcommand{\de}{\partial}

 \DeclareMathOperator{\dv}{div}
 
\DeclareMathOperator{\supp}{supp} 
\renewcommand{\le}{\leqslant}
\renewcommand{\ge}{\geqslant}
\renewcommand{\a }{\alpha }

\renewcommand{\d }{\delta }
\newcommand{\vfi}{\varphi}

\newcommand{\n }{\nabla }
\newcommand{\s }{\sigma }

\renewcommand{\O}{\Omega}

\newcommand{\G}{\Gamma}

\newcommand{\I}{\mathcal{I}}

\newcommand{\X}{\mathcal{X}}
\newcommand{\N}{\mathbb{N}}

\renewcommand{\o}{\omega}

\newcommand{\D }{D^{1,2}(\RN)}
\newcommand{\irn }{\int_{\RN}}

\def\bbm[#1]{\mbo\X{\boldmath $#1$}}
\newcommand{\beq }{\begin{equation}}
\newcommand{\eeq }{\end{equation}}

\title[Electrostatic Born-Infeld equation with extended charges]{On the electrostatic Born-Infeld equation\\ with extended charges}

\author[D. Bonheure]{Denis Bonheure}
\address{D\'epartement de Math\'ematique, Universit\'e libre de Bruxelles, 
\newline\indent 
CP 214, Boulevard du Triomphe, B-1050 Bruxelles, Belgium
\newline\indent 
and INRIA - \'Equipe MEPHYSTO}
\email{denis.bonheure@ulb.ac.be}

\author[P. d'Avenia]{Pietro d'Avenia}
\address{Dipartimento di Meccanica, Matematica e Management,
Politecnico di Bari
\newline\indent
Via Orabona 4,  70125  Bari, Italy}
\email{pietro.davenia@poliba.it}

\author[A. Pomponio]{Alessio Pomponio}
\address{Dipartimento di Meccanica, Matematica e Management,
Politecnico di Bari
\newline\indent
Via Orabona 4,  70125  Bari, Italy}
\email{alessio.pomponio@poliba.it}

\thanks{D. Bonheure is supported by INRIA - Team MEPHYSTO, 
MIS F.4508.14 (FNRS), PDR T.1110.14F (FNRS) 
\& ARC AUWB-2012-12/17-ULB1- IAPAS. 
P. d'Avenia and A. Pomponio are supported by GNAMPA Project ``Analisi variazionale di modelli fisici non lineari''}
\subjclass[2010]{35J93,35Q60,78A30}
\keywords{Born-Infeld equation, nonlinear electromagnetism, extended charges, mean curvature operator in the Lorentz-Minkowski space}

\begin{document}
\begin{abstract}
In this paper, we deal with the electrostatic Born-Infeld equation
\begin{equation}\label{eq:BI-abs}
\tag{$\mathcal{BI}$}
\left\{
\begin{array}{ll}
-\operatorname{div}\left(\displaystyle\frac{\nabla \phi}{\sqrt{1-|\nabla \phi|^2}}\right)= \rho \quad\hbox{in }\mathbb{R}^N,
\\ [4mm]
\displaystyle\lim_{|x|\to \infty}\phi(x)= 0,
\end{array}
\right.
\end{equation}
where $\rho$ is an assigned extended charge density. 
We are interested in the existence and uniqueness of the potential $\phi$ and finiteness of the energy of the electrostatic field $-\nabla \phi$. 
We first relax the problem and treat it with the direct method of the Calculus of Variations for a broad class of charge densities. Assuming $\rho$ is radially distributed, we recover the weak formulation of \eqref{eq:BI-abs} and the regularity of the solution of the Poisson equation (under the same smootheness assumptions). 
In the case of a locally bounded charge, we also recover the weak formulation without assuming any symmetry. The solution is even classical if $\rho$ is smooth. Then we analyze the case where the density $\rho$ is a superposition of point charges and discuss the results in \cite{K}.
Other models are discussed, as for instance a system arising from the coupling of the nonlinear Klein-Gordon equation with the Born-Infeld theory. 
\end{abstract}

\maketitle

\tableofcontents

\section{Introduction}

Classically, the relation between matter and electromagnetic field in the space-time $\R\times\R^3$ can be interpreted from two opposite points of view. Following a unitarian approach, we can consider the electromagnetic field as the unique physical entity and the matter is then given by the singularities of the field.
Conversely, according to a dualistic standpoint, field and particle are two different entities: even if the particles generate the field and interact with it,
they are not a part of the field.
In the dualistic approach, however, according to the original Maxwell theory of electromagnetism, there is the so called infinity problem associated with a point charge source \cite{BI,Fey}. 
More precisely, choosing suitably the physical constants, the classical Maxwell equations for the electrostatic case in the vacuum are
\begin{align}
\nabla \times \mathbf{E}&=0, \label{eq:1}
\\
\dv\mathbf{E}&=\rho, \label{eq:2}
\end{align}
where $\bf E$ is the electric field and $\rho$ is the charge density. Since Equation \eqref{eq:1} implies $\mathbf{E}=-\nabla\phi$, Equation \eqref{eq:2} yields the Poisson equation
\beq\label{eq:3}
-\Delta\phi= \rho.
\eeq
Therefore, 
if $\rho=\d$, the solution of \eqref{eq:3} is 
$\phi(x)=1/(4\pi|x|)$,
but its energy is 
\[
\mathcal{H}=\frac{1}{2}\int_{\RT}|{\bf E}|^2\ dx=\frac{1}{2}\int_{\RT}|\n \phi|^2 \ dx=+\infty.
\]
When $\rho\in L^1(\RT)$, which is another relevant physical case, we cannot say, in general, that \eqref{eq:3} admits a solution with finite energy (see e.g. \cite{FOP} for a counterexample). In fact, it is easily seen from the Gagliardo-Nirenberg-Sobolev inequality, see e.g. \cite{LL}, that the mathematical assumption which implies the finiteness of the energy is $\rho\in L^{6/5}(\R^{3})$. This hypothesis does not cover all relevant physical cases. 

Maxwell's equations are variational by which we mean that they can be derived as the Euler equations of a Lagrangian. 
To avoid the violation of the {\em principle of finiteness}, Max Born proposed a nonlinear theory \cite{Bnat,B} starting from a modification of Maxwell's Lagrangian density. This theory is built on in analogy with Einstein's mechanics of special relativity. Indeed, one passes from Newton's mechanics to Einstein's mechanics by replacing the action function $\mathcal{L}_{\rm N} = \frac{1}{2}mv^2$ with $\mathcal{L}_{\rm E} =mc^2(1-\sqrt{1-v^2/c^2})$ as this last expression is one of the simplest which is real only when $v< c$ and gives the classical formulation in the limit of small velocities. 
By analogy, starting from Maxwell's Lagrangian density in the vacuum
\begin{equation}
\label{eq:Maxcc}
\mathcal{L}_{\rm M} 
= -\frac{F_{\mu\nu}F^{\mu\nu}}{4},
\end{equation}
where $F_{\mu\nu}=\partial_\mu A_\nu -\partial_\nu A_\mu $,  
$(A_0, A_1, A_2, A_3)= (\phi, -{\bf A})$ is the electromagnetic potential, $(x_0, x_1, x_2, x_3)= (t,x)$ and $\partial_j$ denotes the partial derivative with respect to $x_j$,
Born introduced the new Lagrangian density
\begin{equation}
\label{eq:Bcc}
\mathcal{L}_{\rm B}
= b^2\left(1-\sqrt{1+\frac{F_{\mu\nu}F^{\mu\nu}}{2b^2}}\right)
\sqrt{-\det(g_{\mu\nu})},
\end{equation}
where $b$ is a constant having the dimensions of $e/r_0^2$, $e$ and $r_{0}$ being respectively the charge and the {\em effective radius} of the electron. In this last formula, $g_{\mu\nu}$ is the Minkowski metric tensor with signature $(+---)$. Since Born's action, as well as Maxwell's action, is invariant only for the Lorentz group of transformations (orthogonal transformations), some months later, Born and Infeld introduced a modified version of the Lagrangian density
\begin{equation}
\label{eq:BIcc}
\mathcal{L}_{\rm BI}
= b^2\left(\sqrt{-\det(g_{\mu\nu})} - \sqrt{-\det\left(g_{\mu\nu} + \frac{F_{\mu\nu}}{b}\right)} \right),
\end{equation}
whose integral is now invariant for general transformations \cite{BInat,BI}.
Since the electromagnetic field $(\mathbf{E},\mathbf{B})$ is given by 
\[
\mathbf{B}=\nabla\times\mathbf{A}
\quad
\hbox{and}
\quad
\mathbf{E}=-\nabla\phi - \partial_t \mathbf{A},
\]
\eqref{eq:Maxcc}, \eqref{eq:Bcc} and \eqref{eq:BIcc} can be written respectively as
\[
\mathcal{L}_{\rm M}
= \frac{|\mathbf{E}|^2-|\mathbf{B}|^2}{2},
\quad
\mathcal{L}_{\rm B} 
= b^2\left(1-\sqrt{1-\frac{|\mathbf{E}|^2-|\mathbf{B}|^2}{b^2}}\right)
\]
and
\[
\mathcal{L}_{\rm BI} = b^2 \left(1-\sqrt{1-\frac{|\mathbf{E}|^2-|\mathbf{B}|^2}{b^2}-\frac{(\mathbf{E}\cdot\mathbf{B})^2}{b^4}}\right).
\]
In the electrostatic case, in which we are interested in this paper, we infer that
\[
\mathcal{L}_{\rm B} =  \mathcal{L}_{\rm BI} 
= b^2\left(1-\sqrt{1-\frac{|\mathbf{E}|^2}{b^2}}\right).
\]
As emphasized above, we recover Newton's classical mechanics from Einstein's special relativity for small velocities or when $c\to+\infty$. The same holds true with Born-Infeld formulation of electromagnetism: if $b\to+\infty$ or for electromagnetic fields having small intensities, both $\mathcal{L}_{\rm B}$ and  $\mathcal{L}_{\rm BI}$ reduce to Maxwell's Lagrangian density $\mathcal{L}_{\rm M}$.

In presence of a charge density $\rho$, we formally get the equation
\beq\label{eq:as}
-\operatorname{div}\left(\frac{\nabla \phi}{\sqrt{1-|\nabla \phi|^2/b^2}}\right)= \rho,
\eeq
which replaces the Poisson equation \eqref{eq:3}. 
This equation can also be obtained observing that the Born-Infeld theory distinguishes between the electric field $\mathbf{E}$ and the electric induction $\mathbf{D}$: the field $\mathbf{D}$ satisfies
\begin{equation}
\label{eq:divD}
\operatorname{div} \mathbf{D}= \rho
\end{equation}
and the fields $\mathbf{E}$ and $\mathbf{D}$ are related by
\begin{equation}
\label{eq:DE}
\mathbf{D}=\frac{\mathbf{E}}{\sqrt{1-(|\mathbf{E}|/b)^2}}.
\end{equation}
Substituting \eqref{eq:DE} in \eqref{eq:divD}, we recover \eqref{eq:as}.
Finite energy point particle solutions with $\delta$-function sources have been called {\em BIons} (see for example \cite{Gibb98}). When $\rho=\delta$, one can easily explicitly compute the solution,  see for example \cite{P}. 

\medbreak

From now on, for simplicity and without loss of generality, we fix $b=1$.
It is worth mentioning that the operator $Q^{-}$, defined as
\begin{equation}\label{Q-}
Q^{-}(\phi)=-\operatorname{div}\left(\frac{\nabla \phi}{\sqrt{1-|\nabla \phi|^2}}\right),
\end{equation}
also naturally appears in string theory, in particular in the study of $D$-branes (see e.g. \cite{Gibb98}) and, in classical relativity, where $Q^{-}$ represents the mean curvature operator in Lorentz-Minkowski space, see for instance \cite{BS,CY}.  In this last context, the following definition is standard.

\begin{definition}\label{def:spacelike}
	Let $\phi\in C^{0,1}(\Omega)$, with $\Omega\subset\RN$. We say that $\phi$ is
	\begin{itemize}
		\item {\em weakly spacelike} if $|\n \phi|\le 1$ a.e. in $\Omega$;
		\item {\em spacelike} $|\phi(x)-\phi(y)|<|x-y|$ whenever $x,y\in\Omega$, $x\neq y$ and the line segment $\overline{xy}\subset\Omega$;
		\item {\em strictly spacelike} if $\phi$ is spacelike, $\phi\in C^1(\Omega)$ and $|\n \phi|< 1$ in $\Omega$.
	\end{itemize}
\end{definition}

\medbreak
Our motivation in this paper is to study rigorously 
the boundary value problem
\begin{equation}\label{eq:BI}
\tag{$\mathcal{BI}$}
\left\{
\begin{array}{ll}
-\operatorname{div}\left(\displaystyle\frac{\nabla \phi}{\sqrt{1-|\nabla \phi|^2}}\right)= \rho \quad \hbox{in }\mathbb{R}^N,
\\[8mm]
\displaystyle\lim_{|x|\to \infty}\phi(x)= 0, 
\end{array}
\right.
\end{equation}
for general non-trivial charge distributions.
Assuming $N\ge 3$, we work in the functional space 
\begin{equation}\label{eq:spaceX}
\X=D^{1,2}(\RN)\cap \{\phi\in C^{0,1}(\RN) \mid \|\nabla \phi\|_\infty \le 1\},
\end{equation}
equipped with the norm 
defined by
\begin{equation*} \|\phi\|_{\X}:=\left(\int_{\RN}|\nabla\phi|^2 \ dx\right)^{1/2}.
\end{equation*}
More properties of this space are given in Section \ref{se:fs}. We recall that $D^{1,2}(\RN)$ is the completion of $C_c^\infty (\RN)$ with respect to above norm and we anticipate that $\X^*$, the dual space of $\X$, contains Radon measures as for instance superpositions of point charges or $L^1(\RN)$ densities.\\

For a $\rho\in \X^*$, {\em weak solutions} are understood in the following sense.
\begin{definition}\label{def:ws}
A {\em weak solution} of \eqref{eq:BI} is a function $\phi_\rho\in \X$ 
such that for all $\psi \in\X$, we have
\begin{equation}\label{eq:weakBI}
\irn \frac{\n \phi_\rho \cdot \n \psi}{\sqrt{1-|\nabla \phi_\rho|^2}}\, dx
=\langle \rho, \psi\rangle, 
\end{equation}
where $\langle\ , \ \rangle$ denotes the duality pairing between $\X^*$ and $\X$.
\end{definition}
Observe that the boundary condition at infinity is encoded in the functional space. 
We also emphasize that if $\rho$ is a distribution, the weak formulation of \eqref{eq:weakBI} extends to any test function $\psi \in C^{\infty}_{c}(\RN)$.

As Born-Infeld equation is formally the Euler equation of the action functional $I:\X\to \R$ defined by
\begin{equation}\label{eq:functionalBI}
I(\phi)=\irn \Big(1 - \sqrt{1-|\nabla \phi|^2}\Big) dx
- \langle \rho, \phi\rangle,
\end{equation}
we expect that one can derive existence and uniqueness of the solution from a variational principle. Furthermore, since $I$ is  bounded from below in $\X$ and strictly convex, one can look for the solution as the minimizer of $I$ in $\X$ by the direct methods of the Calculus of Variations. However, one needs to pay attention to the lack of regularity of the functional when $ \|\nabla \phi\|_\infty = 1$. Hence, as in convex optimization or in the Calculus of Variations for non smooth functionals (see for example \cite{EkTe,S}), it is natural to relax the notion of critical point: more precisely we say that $\phi_\rho\in \X$ is 
a {\em critical point in  weak sense} for the functional $I$ if $0$ belongs to the subdifferential of $I$ at $\phi_\rho$ (see Definition \ref{def}), which, in our case, simply amounts to ask that $\phi_\rho$ is a minimum for the functional $I$, see Remark \ref{rema}. 
We refer to Subsection \ref{se:fs} for more details. 

\medbreak

We first prove (see Subsection \ref{se:fs}) existence and uniqueness for the relaxed problem.
\begin{theorem}\label{thm:relaxedBI}
For any  $\rho\in \X^{*}$, there exists a unique $\phi_\rho$ which minimizes $I$. This is the unique critical point in weak sense of the functional $I$.
\end{theorem}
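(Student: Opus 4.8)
The plan is to run the direct method of the Calculus of Variations on $\X$, using crucially that $I$ is convex. First I would record the elementary inequalities $\tfrac12 t\le 1-\sqrt{1-t}\le t$ for $t\in[0,1]$ and apply them with $t=|\nabla\phi|^2$ to obtain $\tfrac12\|\phi\|_\X^2\le\int_{\RN}\big(1-\sqrt{1-|\nabla\phi|^2}\big)\,dx\le\|\phi\|_\X^2$ for every $\phi\in\X$. This shows in particular that the first integral in \eqref{eq:functionalBI} is finite, and, combined with $|\langle\rho,\phi\rangle|\le\|\rho\|_{\X^*}\|\phi\|_\X$, it gives $I(\phi)\ge\tfrac12\|\phi\|_\X^2-\|\rho\|_{\X^*}\|\phi\|_\X$, so that $I$ is bounded from below and coercive on $\X$.

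Next I would check that $\X$ is a closed convex subset of the Hilbert space $\D$, hence weakly closed. Convexity follows from the convexity of the constraint $\|\nabla\phi\|_\infty\le1$; for closedness, if $\phi_n\to\phi$ in $\D$ then a subsequence satisfies $\nabla\phi_n\to\nabla\phi$ a.e., so $|\nabla\phi|\le1$ a.e., and a $\D$-function with essentially bounded distributional gradient admits a $1$-Lipschitz representative, whence $\phi\in\X$. I would also observe that the linear span of $\X$ is dense in $\D$ (it contains $C^\infty_c(\RN)$), so every $\rho\in\X^*$ is identified with an element of $\D^*$ and $\phi\mapsto\langle\rho,\phi\rangle$ is weakly continuous on $\D$. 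Then weak lower semicontinuity of $I$ on $\X$ follows: the functional $\phi\mapsto\int_{\RN}\big(1-\sqrt{1-|\nabla\phi|^2}\big)\,dx$ is convex and strongly continuous (by dominated convergence, the integrands being controlled by $|\nabla\phi_n|^2$, which converges in $L^1(\RN)$ along a strongly convergent sequence), hence weakly lower semicontinuous, while the linear part is weakly continuous. Taking a minimizing sequence, coercivity gives boundedness in $\D$, reflexivity gives a weakly convergent subsequence whose weak limit $\phi_\rho$ lies in $\X$, and lower semicontinuity shows $\phi_\rho$ minimizes $I$.

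For uniqueness I would invoke strict convexity. The map $\xi\mapsto 1-\sqrt{1-|\xi|^2}$ is strictly convex on $\overline{B_1}\subset\RN$: its Hessian is positive definite in the interior, and any chord joining two distinct points of $\overline{B_1}$ enters the interior. Moreover, if $\phi_1\neq\phi_2$ in $\X$, then $\nabla\phi_1\neq\nabla\phi_2$ on a set of positive measure, since otherwise $\phi_1-\phi_2$ would be constant, hence $0$ in $\D$. Therefore $I$ is strictly convex on $\X$ and its minimizer is unique. Finally, since $I$ is convex, $\phi_\rho$ being a critical point in weak sense, i.e. $0\in\partial I(\phi_\rho)$, is equivalent to $\phi_\rho$ being a minimizer (Remark~\ref{rema}), which yields the last assertion of the statement.

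I expect the only delicate points to be the weak closedness of $\X$ in $\D$ — that is, transferring the pointwise constraint $|\nabla\phi|\le1$ to a weak limit while retaining Lipschitz regularity, for which one uses that a $\D$-function with bounded distributional gradient is Lipschitz with the same constant — and the bookkeeping identifying the pairing $\langle\rho,\cdot\rangle$ with a weakly continuous functional on $\D$; the remaining arguments are routine convex optimization, and the real substance of the paper lies in the subsequent recovery of the weak formulation and the regularity theory, not here.
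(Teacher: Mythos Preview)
Your proposal is correct and follows essentially the same route as the paper: the elementary inequality $\tfrac12 t\le 1-\sqrt{1-t}\le t$ gives coercivity and boundedness below, convexity plus strong closedness of $\X$ in $\D$ gives weak closedness, convexity plus strong continuity of $I$ gives weak lower semicontinuity, and strict convexity of the integrand yields uniqueness, with Remark~\ref{rema} handling the critical-point-in-weak-sense assertion. Your treatment is in fact slightly more explicit than the paper's in two places---you spell out the passage from pointwise strict convexity on $\overline{B_1}$ to strict convexity of the integral, and you justify weak continuity of the linear part by extending $\rho$ to $\D^*$ (a step the paper simply asserts)---but the overall architecture is identical.
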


Up to our knowledge, it is not known in the literature whether the weak formulation \eqref{eq:weakBI}  holds or not for critical point in weak sense under the mere assumption $\rho\in \X^{*}$. This question has motivated several publications in the past years. In \cite{FOP}, the authors deal with the second order expansion of the non smooth part of the functional, assuming $\rho\in L^{1}(\RN)$, see Subsection \ref{sse:fop}. In \cite{K}, the author considers the special case $\rho = 4\pi\sum_{i=1}^k \a_i \d_{x_i}$, however there is a gap in the proof, see Section \ref{se:delte} for more details.
 
\medbreak

The first case in which we can deduce the weak formulation is when $\rho\in \X^{*}$ is a radially distributed charge, see Section \ref{Sec:radial} for the precise statement. 

\begin{theorem}
\label{thm:exsol}
If $\rho\in  \X^*$ is radially distributed, then there exists a unique (radial) weak solution $\phi_\rho\in \X$ of \eqref{eq:BI}.
\end{theorem}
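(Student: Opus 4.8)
The plan is to split the statement into uniqueness and existence, treating existence by integrating the radial equation explicitly. For uniqueness, the crucial remark is that every weak solution minimizes $I$ on $\X$ (equivalently, is a critical point in weak sense). Indeed, if $\phi\in\X$ satisfies \eqref{eq:weakBI}, testing with $\psi=\phi$ gives $\irn\frac{|\n\phi|^2}{\sqrt{1-|\n\phi|^2}}\,dx=\langle\rho,\phi\rangle<\infty$, which forces $|\n\phi|<1$ a.e.; convexity of $v\mapsto 1-\sqrt{1-|v|^2}$ then yields, for every $\psi\in\X$,
\[
I(\psi)-I(\phi)\ \ge\ \irn\frac{\n\phi\cdot\n(\psi-\phi)}{\sqrt{1-|\n\phi|^2}}\,dx-\langle\rho,\psi-\phi\rangle\ =\ 0 .
\]
Hence $\phi$ is the minimizer $\phi_\rho$ furnished by Theorem~\ref{thm:relaxedBI}, so there is at most one weak solution and it suffices to exhibit a radial one.

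To build it, I would integrate the equation using the radial structure of $\rho$. A radially distributed charge determines its charge function $\sigma(r)=$ ``total charge inside the ball of radius $r$'', and by Gauss' theorem the radial field
\[
\mathbf{D}(x):=-\frac{\sigma(|x|)}{\omega_{N-1}}\,\frac{x}{|x|^{N}},\qquad \omega_{N-1}:=|S^{N-1}|,
\]
satisfies $\dv\mathbf{D}=\rho$ in $\mathcal D'(\RN)$. Solving pointwise $\n\phi/\sqrt{1-|\n\phi|^2}=\mathbf{D}$ forces $\phi(x)=u(|x|)$ with
\[
u'(r)=-\frac{h(r)}{\sqrt{1+h(r)^{2}}},\qquad h(r):=\frac{\sigma(r)}{\omega_{N-1}r^{N-1}},\qquad u(r)=\int_{r}^{\infty}\frac{h(s)}{\sqrt{1+h(s)^{2}}}\,ds ,
\]
the primitive being chosen so that $u(+\infty)=0$. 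By construction $|u'(r)|<1$ for every $r>0$, so the degeneracy of $I$ is confined to the single point $x=0$.

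Next I would verify that $\phi\in\X$. The Lipschitz bound $\|\n\phi\|_\infty\le 1$ and the decay at infinity are built into the formulas, while membership in $D^{1,2}(\RN)$ amounts to $\int_{0}^{\infty}|u'(r)|^{2}r^{N-1}\,dr<\infty$. Since $|u'(r)|\le\min\{|h(r)|,1\}$, the contribution near the origin is bounded by $\int_{0}r^{N-1}\,dr$, while near infinity, the total charge being finite, $h(r)\sim r^{-(N-1)}$, so $|u'(r)|^{2}r^{N-1}\sim r^{-(N-1)}$ is integrable precisely because $N\ge 3$ — the same dimensional threshold that governs the Poisson equation. (Under the mere hypothesis $\rho\in\X^{*}$ one argues with the precise notion of radial distribution of Section~\ref{Sec:radial}; it is exactly in this estimate at infinity that $\rho\in\X^{*}$ enters.)

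Finally, that $\phi$ solves \eqref{eq:weakBI} follows from a one-dimensional integration by parts. Writing $\overline{\psi}(r):=\frac{1}{\omega_{N-1}}\int_{S^{N-1}}\psi(r\omega)\,d\omega$ and passing to polar coordinates, the radiality of $\mathbf{D}$ gives, for every $\psi\in\X$,
\[
\irn\frac{\n\phi\cdot\n\psi}{\sqrt{1-|\n\phi|^{2}}}\,dx=\irn\mathbf{D}\cdot\n\psi\,dx=-\int_{0}^{\infty}\sigma(r)\,\overline{\psi}'(r)\,dr=\langle\rho,\overline{\psi}\rangle=\langle\rho,\psi\rangle ,
\]
where the boundary terms vanish because $\overline{\psi}$ inherits the Lipschitz bound and the decay of $\psi$, and the last equality holds precisely because $\rho$ is radially distributed. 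Together with the uniqueness remark this proves the theorem and shows, as a by-product, that the minimizer $\phi_\rho$ of Theorem~\ref{thm:relaxedBI} is the radial function $u(|x|)$. I expect the main obstacle to be entirely at the level of the radial primitive: giving a rigorous meaning to $\sigma$ (equivalently $h$) and to the one-dimensional integration by parts under the sole assumption $\rho\in\X^{*}$, in particular tracking a possible atom of $\rho$ at the origin — which is what produces $|u'(0^{+})|=1$ — and pinning down the behaviour of $\sigma$ near $0$ and at infinity that places $u$ in $D^{1,2}(\RN)$.
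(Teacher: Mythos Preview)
Your approach is genuinely different from the paper's and more explicit, but the obstacle you identify at the end is real and, under the sole hypothesis $\rho\in\X^{*}$, more serious than you suggest. Two concrete issues:

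First, the assertion ``the total charge being finite, $h(r)\sim r^{-(N-1)}$'' is unjustified. The space $\X^{*}$ is strictly larger than the space of finite Radon measures; for instance $\rho=-\Delta f$ with $f\in D^{1,2}_{\rm rad}(\RN)$ acts by $\langle\rho,\psi\rangle=\int\nabla f\cdot\nabla\psi$, which is bounded on $\X$, yet such a $\rho$ need not have finite (or even well-defined) total mass, and then $h(r)=-f'(r)$ need not decay like $r^{-(N-1)}$. Your $D^{1,2}$ estimate must come directly from $\rho\in\X^{*}$, and you have not indicated how.

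Second, and more fundamentally, for general $\rho\in\X^{*}$ there is no obvious way to define $\sigma(r)$: the indicator of $B_r$ is not in $\X$, Lipschitz cutoffs give only smoothed versions, and since $\X$ is not a linear space one cannot invoke Riesz representation on $\X^{*}$ to write $\rho$ as the divergence of an $L^2$ field. Your final identity $-\int_0^\infty\sigma\,\overline{\psi}'\,dr=\langle\rho,\overline{\psi}\rangle$ is then circular: it is precisely what needs to be established.

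The paper bypasses all of this by working variationally with the minimizer. Knowing $\phi_\rho$ is radial (by uniqueness), it builds a dense family of \emph{admissible} radial variations: given $\psi\in\X_{\rm rad}\cap C_c^\infty$, set $\psi_k(r)=-\int_r^\infty\psi'(s)[1-\chi_{E_k}(s)]\,ds$ where $E_k=\{|\phi_\rho'|\ge 1-\tfrac1k\}$. Then $\phi_\rho+t\psi_k\in\X$ for small $t$, so one may differentiate $I$ honestly along this direction; since $|E_k|\to0$ one passes to the limit $k\to\infty$ by dominated convergence (using \eqref{eq:L1}), obtaining \eqref{eq:weakBI} first for radial test functions and then for all of $\X$ via Remark~\ref{rem:nehary}. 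This Serra--Tilli trick exploits precisely that the radial problem is one-dimensional, so the bad set can be excised from $\psi'$ without losing density. Crucially, $\rho$ is used only as a functional; no representation formula is needed. Your construction is attractive for measures and $L^p$ densities, where $\sigma$ is unambiguous, but to match the paper's generality you would need the variational route.
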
 

Under stronger assumptions on $\rho$, still assuming radial symmetry of the source, we investigate the regularity of the solution and we partially recover the regularity of Poisson equation, for which we refer to \cite{LL}, (see Theorem \ref{thm:reg}).

\medbreak

In the same Section \ref{Sec:radial}, without symmetry assumptions, we consider the case of locally bounded source. 
\begin{theorem}\label{th:limitato}
If $\rho\in L^\infty_{\rm loc}(\mathbb{R}^N)\cap\X^*$, then $\phi_\rho$ is a (locally strictly) space-like weak solution of \eqref{eq:BI}.
\end{theorem}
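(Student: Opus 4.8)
The plan is to obtain $\phi_\rho$ as a limit of solutions of a penalized problem in which the gradient constraint $\|\n\phi\|_\infty\le1$ is dropped, to exploit the local boundedness of $\rho$ to prove a uniform interior gradient bound strictly below $1$, and then to recover \eqref{eq:BI} and the strict spacelikeness by passing to the limit.

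For $\eps\in(0,1)$ replace the integrand $s\mapsto 1-\sqrt{1-s}$ on $[0,1)$ by the convex function $f_\eps\in C^1([0,+\infty))$ which coincides with it on $[0,1-\eps]$ and is continued by its tangent line at $1-\eps$ thereafter, and set
\[
J_\eps(\phi)=\irn f_\eps(|\n\phi|^2)\,dx-\langle\rho,\phi\rangle,\qquad \phi\in\D,
\]
where we use that $\rho\in\X^*$ extends uniquely to the dual of $\D$. Since $f_\eps$ is strictly convex with $f_\eps(s)\ge\tfrac14 s$ for $\eps$ small, $J_\eps$ is strictly convex, coercive and weakly lower semicontinuous on $\D$, so it has a unique minimizer $\phi_\eps$, with $\|\phi_\eps\|_{\D}\le4\|\rho\|_{\X^*}$ uniformly in $\eps$; moreover $\phi_\eps$ is a weak solution of $-\dv\!\left(2f_\eps'(|\n\phi_\eps|^2)\,\n\phi_\eps\right)=\rho$, and this operator is monotone and uniformly elliptic from below, since $2f_\eps'\ge1$.

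The core of the proof — and the step I expect to be the main obstacle — is a uniform interior gradient estimate: for every ball $B$ with $\overline{2B}\subset\RN$ there is $\delta=\delta(B)>0$, independent of $\eps$, such that $\|\n\phi_\eps\|_{L^\infty(B)}\le1-\delta$ for all small $\eps$. To establish it I would first derive, from the $\D$-bound together with $\rho\in L^\infty_{\rm loc}(\RN)$ and elliptic estimates, uniform local $L^\infty$ (and oscillation) bounds on $\phi_\eps$; then, on a ball $B_{2R}(x_0)$ where $|\rho|\le M$, I would compare $\phi_\eps$ with explicit radial barriers — translates and rescalings of the BIon profile and of the radial solution of $-\dv(\n u/\sqrt{1-|\n u|^2})=\pm M$, all vanishing at infinity and with gradient strictly below $1$ in the relevant region — via the comparison principle for the monotone penalized operator, and read off the barriers' gradients at $x_0$. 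This is exactly where the hypothesis on $\rho$ enters: the estimate is false for point charges, where $|\n\phi|\to1$ at the charge, and it is here that one must keep the degeneration of the ellipticity ratio as $|\n\phi_\eps|\to1$ under control (this is, in spirit, the Bartnik--Simon regularity for spacelike graphs of bounded mean curvature).

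Granted the gradient estimate, the rest is routine. Up to a subsequence $\phi_\eps\weakto\phi_0$ in $\D$; since $\{v\in L^2(B):|v|\le1-\delta(B)\text{ a.e.}\}$ is convex and closed, hence weakly closed, we get $|\n\phi_0|\le1-\delta(B)$ on $B$, so $\phi_0\in\X$ and $|\n\phi_0|<1$ everywhere. On each ball the equation for $\phi_\eps$ is then uniformly elliptic with locally bounded right-hand side, which upgrades the convergence to strong convergence in $D^{1,2}_{\rm loc}(\RN)$ and gives $\phi_0\in W^{2,p}_{\rm loc}(\RN)\subset C^{1,\a}_{\rm loc}(\RN)$ for all $p<\infty$ (and $\phi_0\in C^\infty$ when $\rho$ is smooth, by bootstrap). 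One may then pass to the limit in the weak formulation of the penalized equation to obtain $\irn\n\phi_0\cdot\n\psi/\sqrt{1-|\n\phi_0|^2}\,dx=\langle\rho,\psi\rangle$ first for $\psi\in C^\infty_c(\RN)$, and then — since $\n\phi_0/\sqrt{1-|\n\phi_0|^2}\in L^2(\RN)$ and $C^\infty_c(\RN)$ is dense in $\D$ — for every $\psi\in\X$. Thus $\phi_0$ is a critical point in weak sense of $I$, so by Theorem~\ref{thm:relaxedBI} it coincides with $\phi_\rho$; and the local gradient bound shows $\phi_\rho$ is strictly spacelike on every bounded domain, the strict inequality $|\phi_\rho(x)-\phi_\rho(y)|<|x-y|$ on a segment $\overline{xy}$ following by integrating $\n\phi_\rho$ along it. This is the asserted conclusion.
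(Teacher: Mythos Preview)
Your penalization route differs substantially from the paper's and pushes all the difficulty into the uniform-in-$\eps$ interior gradient estimate, which you rightly flag as the crux but only sketch. There is a real obstruction your outline does not meet: the penalized operator $-\dv\!\bigl(2f_\eps'(|\n\phi_\eps|^2)\n\phi_\eps\bigr)$ has ellipticity ratio of order $\eps^{-1/2}$ (since $2f_\eps'$ ranges from $1$ up to $\eps^{-1/2}$), so the De~Giorgi--Nash--Moser local $L^\infty$ and oscillation estimates you invoke as a preliminary step degenerate as $\eps\to0$; yet the barrier comparison you propose next needs precisely that oscillation control as input in order to pin the boundary values of $\phi_\eps$ on $\partial B_{2R}(x_0)$. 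You are, in effect, proposing to reprove the Bartnik--Simon interior gradient estimate for spacelike graphs of bounded mean curvature, with the added complication of uniformity in the penalization parameter. A minor side remark: the extension of $\rho$ from $\X^*$ to $(D^{1,2}(\RN))^*$ is indeed correct---apply the $\X^*$ bound to $\phi/\max(1,\|\n\phi\|_\infty)\in\X$ and use linearity---but this deserves a line of justification rather than an assertion.

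The paper avoids approximation entirely. Since $\phi_\rho$ already minimizes $I$ on $\X$, its restriction to any bounded smooth domain $\Omega$ minimizes the localized functional $I_\Omega$ among weakly spacelike competitors with boundary trace $\phi_\rho|_{\partial\Omega}$. One then applies \cite[Corollary~4.2]{BS} as a black box: $\phi_\rho$ is strictly spacelike and solves $Q^-(\phi_\rho)=\rho$ on $\Omega\setminus K$, where $K$ is the union of light segments $\overline{xy}\subset\Omega$ with $x,y\in\partial\Omega$ and $|\phi_\rho(x)-\phi_\rho(y)|=|x-y|$, and along each such segment $\phi_\rho$ is affine with unit slope. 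The entire remaining task is to show $K=\emptyset$. If a light segment existed, one enlarges $\Omega$ to a large ball $B_R$; \cite[Theorem~3.2]{BS} forces the affine behaviour to persist across $B_R$, so $\phi_\rho$ would grow linearly along an arbitrarily long line, contradicting $\phi_\rho\in L^\infty(\RN)$. The hypothesis $\rho\in L^\infty_{\rm loc}$ enters only to make \cite{BS} applicable on each bounded $\Omega$; no penalization, no passage to the limit, and no uniform-in-$\eps$ estimate is needed.
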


The case of a superposition of charges, namely 
$$\rho=\sum_{i=1}^k a_i \d_{x_i},$$  
where $a_i\in \R$ and $x_i\in \RN$, for $i=1,\ldots,k$, $k\in\N_{0}$, 
is studied in Section \ref{se:delte}. We first identify, as in \cite{K}, the possible singular points of the solutions. Basically,  the minimum $\phi_\rho$ of $I$
is always strictly spacelike on $\RN\setminus\G$, where 
$$ \G=\bigcup_{ i\neq j}\overline{x_i x_j}.$$ 
We then prove that the minimizer is a distributional solution away from the charges and if the intensities are small or if the charges are sufficiently far away from each other, then $\phi_{\rho}$ is strictly spacelike on $\RN\setminus\{x_1,\ldots,x_k\}$ and singular only and exactly at the location of the charges, i.e.
\[
\lim_{x\to x_i}|\n \phi_\rho(x)|=1. 
\]
\begin{theorem}\label{th:deltafinale}
Assume $\rho=\sum_{i=1}^k a_i \d_{x_i}$, where $a_i\in \R$ and $x_i\in \RN$, for all $i=1,\ldots,k$. 
Then $\phi_\rho$ is a distributional solution of the Euler-Lagrange equation in $\RN\setminus\{x_1,\ldots,x_k\}$.
Namely, for every $\psi \in C_{c}^\infty(\RN\setminus\{x_1,\ldots,x_k\})$, we have
\begin{equation*} 
\irn \frac{\n \phi_\rho \cdot \n \psi}{\sqrt{1-|\nabla \phi_\rho|^2}}\, dx
=0.
\end{equation*}
It is a classical solution of the equation in $\RN\setminus\G$, namely $\phi_\rho\in C^\infty(\RN\setminus\Gamma)$ and 
$$-\dv \left( \dfrac{\n \phi}{\sqrt{1-|\n \phi|^2}}\right)=0$$
in the classical sense in $\RN\setminus\G$.
Moreover, 
\begin{enumerate}
\item \label{it:t161}for any fixed $x_i\in \RN$, $i=1,\ldots,k$, there exists $\s=\s(x_1,\ldots,x_k)>0$ such that if 
$$\max_{ i=1,\ldots,k}|a_i|<\s,$$ then $\phi_\rho$ is a classical solution in $\RN\setminus\{x_1,\ldots,x_k\}$; 
\item \label{it:t162}for any $a_i\in \R$, $i=1,\ldots,k$, there exists $\tau=\tau(a_1,\ldots,a_k)>0$ such that if
$$\min_{ i,j=1,\ldots,k, \ i\neq j}|x_i-x_j|>\tau,$$ 
then $\phi_\rho$ is a classical solution in $\RN\setminus\{x_1,\ldots,x_k\}$. 
\end{enumerate}
In these last cases, 
$\phi_\rho\in C^\infty (\RN\setminus\{x_1,\ldots,x_k\})$, it is strictly spacelike on $\RN\setminus\{x_1,\ldots,x_k\}$ and 
\[
\lim_{x\to x_i}|\n \phi_\rho(x)|=1. 
\]
\end{theorem}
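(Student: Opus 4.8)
The plan is to exploit that, away from the charges, $\phi_\rho$ solves a \emph{source-free} problem, and then to read off the various statements from the regularity and structure theory for the Lorentz--Minkowski mean curvature operator. Let $\phi_\rho\in\X$ be the minimizer of $I$ given by Theorem~\ref{thm:relaxedBI}. First I would note that for every ball $B$ with $\overline B\subset\RN\setminus\{x_1,\dots,x_k\}$ and every $w\in\X$ with $w=\phi_\rho$ on $\RN\setminus B$ one has $\langle\rho,w-\phi_\rho\rangle=\sum_{i=1}^k a_i\big(w(x_i)-\phi_\rho(x_i)\big)=0$, so that
\[
I(w)-I(\phi_\rho)=\int_B\Big(\sqrt{1-|\nabla\phi_\rho|^2}-\sqrt{1-|\nabla w|^2}\,\Big)\,dx\ \ge\ 0;
\]
thus $\phi_\rho$ is, on $B$, a minimizer of the pure area functional $\int_B(1-\sqrt{1-|\nabla\phi|^2})$ among weakly spacelike functions with the same boundary trace. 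Invoking the theory of such minimizers (see \cite{BS} and the references therein), one obtains, locally on $\RN\setminus\{x_1,\dots,x_k\}$: (i) $\mathbf{D}_{\phi_\rho}:=\nabla\phi_\rho/\sqrt{1-|\nabla\phi_\rho|^2}\in L^1_{\mathrm{loc}}$ and $\operatorname{div}\mathbf{D}_{\phi_\rho}=0$ distributionally; (ii) the set where $\phi_\rho$ fails to be strictly spacelike is a union of light segments, along which $\phi_\rho$ is affine with slope $1$, and which do not terminate inside $\RN\setminus\{x_1,\dots,x_k\}$. Statement (i), patched over $\RN\setminus\{x_1,\dots,x_k\}$, gives the first assertion. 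For (ii), a light segment cannot escape to infinity, since $\phi_\rho$ varies at unit rate along it while $\phi_\rho(x)\to0$ as $|x|\to\infty$; hence each maximal light segment has both endpoints in $\{x_1,\dots,x_k\}$ and so lies in $\Gamma=\bigcup_{i\ne j}\overline{x_i x_j}$ — this is where the argument of \cite{K} must be completed — and $\phi_\rho$ is strictly spacelike on $\RN\setminus\Gamma$. On that set $\nabla\phi_\rho$ stays locally away from the light cone, so the operator is uniformly elliptic with smooth coefficients; the distributional identity makes $\phi_\rho$ a weak solution of a uniformly elliptic quasilinear equation on each $\overline B\subset\RN\setminus\Gamma$, and De Giorgi--Nash--Moser plus Schauder bootstrap give $\phi_\rho\in C^\infty(\RN\setminus\Gamma)$, a classical solution of $-\operatorname{div}\big(\nabla\phi/\sqrt{1-|\nabla\phi|^2}\big)=0$ there.

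For the criteria \eqref{it:t161}--\eqref{it:t162} I would use an a priori bound. Since $I(\phi_\rho)\le I(0)=0$, using $1-\sqrt{1-t^2}\ge t^2/2$ and the interpolation estimate valid on $\X$ (the $1$-Lipschitz bound combined with the $D^{1,2}$ embedding, cf. Subsection~\ref{se:fs}),
\[
\tfrac12\|\nabla\phi_\rho\|_2^2\ \le\ \irn\Big(1-\sqrt{1-|\nabla\phi_\rho|^2}\,\Big)\,dx\ \le\ \langle\rho,\phi_\rho\rangle\ \le\ \Big(\sum_{i=1}^k|a_i|\Big)\|\phi_\rho\|_\infty\ \le\ C\Big(\sum_{i=1}^k|a_i|\Big)\|\nabla\phi_\rho\|_2^{2/N},
\]
whence $\|\phi_\rho\|_\infty\le c\big(\sum_i|a_i|\big)$ with $c$ continuous, increasing, $c(0)=0$. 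If some line $\overline{x_i x_j}$ carried a light segment, its endpoints would be two of the $x_l$, say $p\ne q$, with $|p-q|=|\phi_\rho(p)-\phi_\rho(q)|\le 2\|\phi_\rho\|_\infty\le 2c(\sum_l|a_l|)$. Hence: with the $x_i$ fixed and $d_0:=\min_{i\ne j}|x_i-x_j|>0$, any $\sigma$ with $2c(k\sigma)<d_0$ works for \eqref{it:t161}; with the $a_i$ fixed, $\tau:=2c(\sum_l|a_l|)$ works for \eqref{it:t162}. In either case $\Gamma\setminus\{x_1,\dots,x_k\}$ contains no light segment, so $\phi_\rho$ is strictly spacelike on all of $\RN\setminus\{x_1,\dots,x_k\}$ and, by the argument above applied on $\RN\setminus\{x_1,\dots,x_k\}$, belongs to $C^\infty(\RN\setminus\{x_1,\dots,x_k\})$ and solves the equation classically there.

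Finally, in these cases I would check $\lim_{x\to x_i}|\nabla\phi_\rho(x)|=1$ for $a_i\ne0$. Since $\phi_\rho$ is now a classical source-free solution on a punctured ball $B_r(x_i)\setminus\{x_i\}$, the flux $\int_{\partial B_\varepsilon(x_i)}\mathbf{D}_{\phi_\rho}\cdot\nu\,dS$ is independent of $\varepsilon\in(0,r)$; testing $I$ at $\phi_\rho+t\psi$ with $\psi\in C^\infty_c(B_r(x_i))$, $\psi\equiv1$ near $x_i$ (admissible for small $t$, as $\nabla\psi$ is supported where $\phi_\rho$ is strictly spacelike while $\psi$ is constant where $\phi_\rho$ is singular) and using minimality identifies this flux with $-a_i\ne0$. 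A bounded field would give flux $\to0$, so $\mathbf{D}_{\phi_\rho}$ is unbounded near $x_i$ and $\limsup_{x\to x_i}|\nabla\phi_\rho(x)|=1$. To upgrade to a genuine limit, I would compare with the explicit radial BIon $w_i$ solving $-\operatorname{div}\big(\nabla w/\sqrt{1-|\nabla w|^2}\big)=a_i\delta_{x_i}$, whose field behaves like $\tfrac{a_i}{\sigma_{N-1}}\tfrac{x-x_i}{|x-x_i|^N}$ near $x_i$: a barrier comparison gives $|\mathbf{D}_{\phi_\rho}(x)|\lesssim|x-x_i|^{-(N-1)}$, so $\mathbf{D}_{\phi_\rho}-\mathbf{D}_{w_i}$ is divergence-free and of strictly lower order near $x_i$, hence bounded there; therefore $\mathbf{D}_{\phi_\rho}(x)/|\mathbf{D}_{\phi_\rho}(x)|\to\operatorname{sgn}(a_i)(x-x_i)/|x-x_i|$ and, since $\nabla\phi_\rho=\mathbf{D}_{\phi_\rho}/\sqrt{1+|\mathbf{D}_{\phi_\rho}|^2}$ with $|\mathbf{D}_{\phi_\rho}|\to\infty$, one gets $\nabla\phi_\rho(x)\to\operatorname{sgn}(a_i)(x-x_i)/|x-x_i|$, in particular $|\nabla\phi_\rho(x)|\to1$.

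The main obstacle is Step (ii): rigorously establishing that the non–strictly-spacelike set of the minimizer is contained in $\Gamma$ — the light-segment structure away from the charges, which is precisely the gap in \cite{K} — together with extracting from the Bartnik--Simon-type theory the distributional Euler--Lagrange equation with an $L^1_{\mathrm{loc}}$ momentum field on $\RN\setminus\{x_1,\dots,x_k\}$. The near-charge comparison in the last step is more routine once $\phi_\rho$ is known to be classical on the punctured balls, but it still requires a careful barrier argument for the degenerate operator.
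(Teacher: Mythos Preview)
Your overall architecture matches the paper's: use that $\phi_\rho$ is a local minimizer of the \emph{source-free} functional on every domain avoiding the charges, invoke Bartnik--Simon to get classical solvability off a light-ray set $K$, confine $K$ to $\Gamma$ by the decay (boundedness) of $\phi_\rho$ at infinity, and then use an $L^\infty$ bound on $\phi_\rho$ to rule out light segments on $\Gamma$ under smallness/separation hypotheses. Two points deserve comment.

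\medskip

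\textbf{The distributional equation on $\RN\setminus\{x_1,\dots,x_k\}$.} This is the one place where your sketch has a genuine gap and the paper supplies a concrete mechanism you do not. From Bartnik--Simon one gets that $\phi_\rho$ is strictly spacelike and classically solves $Q^-(\phi_\rho)=0$ only on $\Omega\setminus K$; on $K$ one has $|\nabla\phi_\rho|=1$, so $\mathbf D_{\phi_\rho}$ is a priori undefined there, and it is \emph{not} immediate that the equation holds in $\mathcal D'(\Omega)$ across $K$. You flag this as an obstacle but offer no resolution. The paper's device is to freeze the coefficient $a(x)=(1-|\nabla\phi_\rho|^2)^{-1/2}$ and appeal to Trudinger's existence--uniqueness theorem for divergence-form linear operators with measurable coefficients: on each bounded $\Omega\subset\RN\setminus\{x_1,\dots,x_k\}$ the linear problem $-\operatorname{div}(a(x)\nabla\phi)=0$, $\phi=\phi_\rho$ on $\partial\Omega$, has a unique distributional solution; since $\phi_\rho$ already solves this classically on $\Omega\setminus\Gamma$, uniqueness forces agreement, and one reads off that $\phi_\rho$ itself is distributional on $\Omega$. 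This is the missing step, and it is not obtainable just ``from \cite{BS} and the references therein''. (Incidentally, the gap in \cite{K} that the paper identifies is not the confinement of light rays to $\Gamma$ --- which your decay argument handles correctly and is the same as the paper's --- but rather the unjustified differentiation of the functional in all directions; so your parenthetical locates it in the wrong place.)

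\medskip

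\textbf{Where your route is actually cleaner.} For \eqref{it:t161}--\eqref{it:t162} the paper splits the $L^\infty$ control into two lemmas: a direct bound $\|\phi_\rho\|_\infty\le C(a_1,\dots,a_k)$ from $I(\phi_\rho)\le0$ (your estimate, essentially), plus a separate \emph{contradiction} argument using a convergence theorem (their Theorem~\ref{th:convergenza}) to show $\|\phi_\rho\|_\infty\to0$ as the $a_i\to0$. Your single quantitative estimate handles both cases at once and avoids the detour through approximation. For the singular behaviour $|\nabla\phi_\rho|\to1$ at the charges, the paper simply cites a theorem of Ecker \cite{E}; your flux computation (testing with a plateau $\psi$ supported where $\phi_\rho$ is strictly spacelike) correctly identifies the flux as $-a_i$ and gives $\limsup_{x\to x_i}|\nabla\phi_\rho|=1$ cleanly, though the BIon comparison you propose to upgrade this to a full limit is considerably more work than invoking \cite{E}.
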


Moreover, in some cases, even if we do not know that minimum $\phi_\rho$ of the functional $I$ is actually a weak solution of \eqref{eq:BI}, we can say that it
is the {\em limit} of solutions of approximated problems obtained by modifying the differential operator or mollifying the charge density. This is the concern of Section \ref{se:approx}.

We conclude in Section \ref{sec:conclusion} by additional results and comments. In particular, we mention how our method completes some previous studies \cite{DP,M,Wang,yu} of a field, governed by the nonlinear Klein-Gordon equation, coupled with the electromagnetic field whose Lagrangian density is given by \eqref{eq:Bcc} or \eqref{eq:BIcc}, by means of the Weil covariant derivatives.

\medbreak

We finally mention that the operator $Q^{-}$ has been studied in other situations by many authors in the recent years. We refer to \cite{A,BDD} for some results in $\R^{N}$ and to \cite{Maw} which provides further references for boundary value problems in a bounded domain. In particular Bartnik and Simon have been among the first to deal with this type of differential operator and some of the ideas from \cite{BS} are fundamental in our arguments. Observe that the results of Bartnik and Simon have been used to deal with the Dirichlet problem in bounded or unbounded 
domains, see \cite{Kl1,Kl2,KM}, assuming the existence of prescribed singularities inside the domain. The results therein can be used for a superposition of charges, if the intensities of the charges are sufficiently small, but they are restricted to domains with boundary and with a given Dirichlet condition.

\medbreak

We close the introduction with some notations: $C$ denotes a generic positive constants which can change from line to line, $B_R$ is the ball centered in $0$ with radius $R>0$ and $\o_N$ denotes the measure of the $(N-1)$-dimensional unitarian sphere. For every $1\le p<N$, $p^*$ is the critical Sobolev exponent in the Sobolev inequality, namely $p^*= Np/(N-p)$.

\subsection*{Acknowledgement}
This work has been partially carried out during a stay of P.D. and A.P. in Bruxelles. They would like to
express their deep gratitude to the D\'epar\-te\-ment de Math\'ematique, Universit\'e libre de Bruxelles, for the support and warm hospitality. D.B. acknowledge the support of INDAM for his visits at the Politecnico di Bari where parts of this work have been achieved.

\section{A relaxed formulation via nonsmooth analysis }\label{se:relaxed}

This section is devoted to the proof of Theorem \ref{thm:relaxedBI} and to useful properties of the minimum $\phi_\rho$. We start with the functional setting and we recall some well-known facts from convex analysis and non smooth critical point theory.

\subsection{Functional setting and the existence of the critical point in weak sense}\label{se:fs}

We start with some properties of the ambient space $\X$ defined in \eqref{eq:spaceX}. The proof follows from standard arguments that we give for completeness.

\begin{lemma}
\label{lemma21}
The following assertions hold:
\begin{enumerate}[label=(\roman*),ref=\roman*]
\item \label{it:w1p}$\X$ is continuously embedded in $W^{1,p}(\RN)$, for all $p\ge 2^*=2N/(N-2)$;
\item \label{it:embLinf}  $\X$ is continuously embedded in $L^\infty(\RN)$;
\item \label{it:C0} if $\phi\in \X$, then $\lim_{|x|\to \infty} \phi(x)=0$;
\item \label{it:wc} $\X$ is weakly closed;
\item \label{it:compact}  if $(\phi_n)_n\subset\X$ is bounded, there exists $\bar \phi\in \X$ such that, up to a subsequence, $\phi_{n}\rightharpoonup \bar \phi$ weakly in $\X$ and uniformly on compact sets.
\end{enumerate}
\end{lemma}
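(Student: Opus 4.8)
The plan is to establish the five assertions essentially in order, leaning on the Gagliardo–Nirenberg–Sobolev inequality and the Lipschitz constraint $\|\nabla\phi\|_\infty\le 1$ built into $\X$. For (\ref{it:w1p}), I would start from the Sobolev inequality $\|\phi\|_{2^*}\le C\|\nabla\phi\|_2$ valid on $D^{1,2}(\RN)$ for $N\ge 3$, which gives $\phi\in L^{2^*}(\RN)$ with control by $\|\phi\|_\X$. Since $|\nabla\phi|\le 1$ a.e., we also have $|\nabla\phi|^p\le|\nabla\phi|^2$ for every $p\ge 2$, so $\nabla\phi\in L^p(\RN)$ with $\|\nabla\phi\|_p\le\|\nabla\phi\|_2^{2/p}$; interpolating $\phi$ between $L^{2^*}$ and the (eventual) higher integrability, or more simply bounding $\int|\phi|^p\le\int|\phi|^{2^*}$ once $p\ge 2^*$ after noting $\phi$ is bounded (see (\ref{it:embLinf})), yields the continuous embedding into $W^{1,p}(\RN)$ for $p\ge 2^*$.

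For (\ref{it:embLinf}) I would combine the two pieces of information: $\phi\in L^{2^*}(\RN)$ and $\phi$ is globally $1$-Lipschitz. A standard argument shows a function that is simultaneously in some $L^q$ and uniformly Lipschitz must be bounded: if $|\phi(x_0)|=M$ were large, Lipschitz continuity forces $|\phi|\ge M/2$ on a ball of radius $\sim M$ around $x_0$, giving $\|\phi\|_{2^*}^{2^*}\gtrsim M^{2^*}\cdot M^N\to\infty$, a contradiction; tracking constants gives $\|\phi\|_\infty\le C\|\phi\|_{2^*}^{\theta}\le C\|\phi\|_\X^{\theta}$ for the appropriate exponent $\theta=2^*/(2^*+N)$, hence the continuous embedding. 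Assertion (\ref{it:C0}) follows from the same mechanism: $\phi\in L^{2^*}(\RN)$ together with uniform continuity (again the Lipschitz bound) forces $\phi(x)\to 0$ as $|x|\to\infty$, since otherwise $|\phi|$ would stay bounded below on a sequence of disjoint unit balls and $L^{2^*}$-integrability would fail.

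For (\ref{it:wc}) and (\ref{it:compact}) I would argue together. Given a bounded sequence $(\phi_n)_n\subset\X$, boundedness in $D^{1,2}(\RN)$ (a Hilbert space) gives a weakly convergent subsequence $\phi_n\weakto\bar\phi$ in $D^{1,2}(\RN)$; by the uniform Lipschitz bound and $\phi_n(0)$ being bounded (from (\ref{it:embLinf})), Arzelà–Ascoli yields, up to a further subsequence, uniform convergence on compact sets to some continuous function, which must coincide with $\bar\phi$. The Lipschitz constraint passes to the limit under uniform convergence, so $\|\nabla\bar\phi\|_\infty\le 1$ and $\bar\phi\in\X$; this proves (\ref{it:compact}), and (\ref{it:wc}) is the special case where $\phi_n\weakto\phi$ already — the limit lies in $\X$ — so $\X$ is weakly closed (equivalently, $\X$ is convex and strongly closed in $D^{1,2}(\RN)$, since the constraint $\|\nabla\phi\|_\infty\le 1$ is convex and closed under $D^{1,2}$-convergence via a.e. convergence of a subsequence of gradients).

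I expect the only mild obstacle to be the quantitative $L^\infty$ bound in (\ref{it:embLinf}): one must be a little careful to get a clean inequality $\|\phi\|_\infty\le C\|\phi\|_\X$ rather than merely qualitative boundedness, by correctly balancing the radius of the ball on which $|\phi|$ is large against the Lipschitz constant $1$. Everything else is routine once the Sobolev inequality and the Lipschitz constraint are in hand; in particular the interplay ``$L^{2^*}\cap\mathrm{Lip}\Rightarrow L^\infty\cap C_0$'' is the single mechanism driving (\ref{it:embLinf}) and (\ref{it:C0}).
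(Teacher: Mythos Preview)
Your proposal is correct, and for (\ref{it:wc}) and (\ref{it:compact}) it matches the paper almost verbatim (convexity plus strong closedness for (\ref{it:wc}), weak compactness in $D^{1,2}(\RN)$ plus Arzel\`a--Ascoli for (\ref{it:compact})). The difference lies in the logical ordering of (\ref{it:w1p})--(\ref{it:C0}). The paper first proves (\ref{it:w1p}) directly: since $|\nabla\phi|\le 1$ and $|\nabla\phi|\in L^2(\RN)$, one has $|\nabla\phi|\in L^q(\RN)$ for every $q\ge 2$, and applying the Sobolev inequality at each level $q\in[2,N)$ gives $\phi\in L^{q^*}(\RN)$ for all $q^*\in[2^*,\infty)$, hence $\phi\in W^{1,p}(\RN)$ for every $p\ge 2^*$. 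Assertions (\ref{it:embLinf}) and (\ref{it:C0}) then follow in one line from (\ref{it:w1p}) via the Morrey--Sobolev embedding $W^{1,p}(\RN)\hookrightarrow C_0(\RN)$ for $p>N$. You instead apply Sobolev only at $q=2$ to get $\phi\in L^{2^*}(\RN)$, then prove (\ref{it:embLinf}) and (\ref{it:C0}) by the direct ``Lipschitz $\cap\, L^{2^*}\Rightarrow L^\infty\cap C_0$'' argument, and finally bootstrap to (\ref{it:w1p}) using the $L^\infty$ bound. Both routes are elementary; the paper's is slightly shorter because Morrey handles (\ref{it:embLinf}) and (\ref{it:C0}) for free, while yours is more self-contained since it avoids invoking Morrey. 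One small slip: your inequality $\int|\phi|^p\le\int|\phi|^{2^*}$ should read $\int|\phi|^p\le\|\phi\|_\infty^{\,p-2^*}\int|\phi|^{2^*}$; this is harmless since you are using (\ref{it:embLinf}) at that point anyway.
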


\begin{proof}
By definition, if $\phi\in\X$, then for every $q\ge 2$, we have $|\n \phi|\in L^q(\RN)$ whence  
$\phi\in L^{q^*}(\RN)$ for $q\in [2,N)$ by Sobolev inequality. Hence $\phi\in W^{1,p}(\RN)$ for $p\ge 2^*$. The continuity of the imbedding is clear and then the  proof of assertion $(\ref{it:w1p})$ is complete.
\\
Assertions $(\ref{it:embLinf})$ and $(\ref{it:C0})$ are direct consequences of $(\ref{it:w1p})$, Morrey-Sobolev inequality.  
\\
As regards $(\ref{it:wc})$, since $\X$ is convex,  it is sufficient to show that $\X$ is closed with respect to the strong topology. Take $(\phi_n)_{n}\subset \X$ such that $\phi_n \to \phi$ in $\X$. Then we have 
$$|\phi_{n}(x)-\phi_{n}(y)|\le |x-y|,$$
for all $x,y\in \RN$. Since $\phi_n \to \phi$ uniformly in $\RN$ by $(\ref{it:embLinf})$, we conclude that $\|\n\phi\|_\infty\le 1$.
We finally prove $(\ref{it:compact})$. Since $(\phi_{n})_{n}$ is a bounded sequence in $D^{1,2}(\RN)$, it contains a weakly converging subsequence that we still denote by $(\phi_{n})_{n}$. By $(\ref{it:wc})$, the weak limit $\bar\phi$ belongs to $\X$ and by Ascoli-Arzel\`a Theorem, the convergence is uniform on compact sets. 
\end{proof}

We now give some properties of the functional $I$. The simple inequality
\beq\label{ineq}
\frac 12 t\le 1-\sqrt{1-t}\le t, \quad \hbox{ for all }t\in [0,1]
\eeq
will be useful. 

\begin{lemma}\label{blabla}
The functional $I: \X\to \mathbb{R}$ is 
\begin{enumerate}[label=(\roman*), ref=\roman*]
\item \label{it:bb} bounded from below,
\item \label{it:coer} coercive,
\item \label{it:cont} continuous,
\item \label{it:conv} strictly convex,
\item \label{it:wlsc} weakly lower semi-continuous.
\end{enumerate} 
\end{lemma}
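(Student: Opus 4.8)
The plan is to verify the five properties of $I$ in the order they are listed, using the elementary inequality \eqref{ineq} together with the embedding properties collected in Lemma \ref{lemma21}. First I would handle boundedness from below and coercivity simultaneously. For any $\phi\in\X$ we have $\|\n\phi\|_\infty\le 1$, so the integrand $1-\sqrt{1-|\n\phi|^2}$ is well-defined and, by the left-hand side of \eqref{ineq} applied with $t=|\n\phi|^2$, we get $\irn(1-\sqrt{1-|\n\phi|^2})\,dx\ge \frac12\irn|\n\phi|^2\,dx=\frac12\|\phi\|_\X^2$. Since $\rho\in\X^*$, we have $|\langle\rho,\phi\rangle|\le\|\rho\|_{\X^*}\|\phi\|_\X$, hence
\[
I(\phi)\ge \tfrac12\|\phi\|_\X^2-\|\rho\|_{\X^*}\|\phi\|_\X,
\]
which is bounded below (by $-\frac12\|\rho\|_{\X^*}^2$) and tends to $+\infty$ as $\|\phi\|_\X\to\infty$. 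This settles (\ref{it:bb}) and (\ref{it:coer}) at once.

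For continuity (\ref{it:cont}), I would show that if $\phi_n\to\phi$ in $\X$, then $I(\phi_n)\to I(\phi)$. The linear term $\langle\rho,\phi\rangle$ is continuous by definition of $\X^*$. For the nonlinear term, note $\n\phi_n\to\n\phi$ in $L^2(\RN)$ and, passing to a subsequence, $\n\phi_n\to\n\phi$ a.e.\ and dominated by some $L^2$ function; since $t\mapsto 1-\sqrt{1-t}$ is continuous on $[0,1]$ and bounded there by the identity (right-hand side of \eqref{ineq}), the integrand $1-\sqrt{1-|\n\phi_n|^2}$ is dominated by $|\n\phi_n|^2$, and dominated convergence gives $\irn(1-\sqrt{1-|\n\phi_n|^2})\,dx\to\irn(1-\sqrt{1-|\n\phi|^2})\,dx$. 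A standard subsequence argument upgrades this to convergence of the full sequence.

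For strict convexity (\ref{it:conv}), it suffices to observe that the map $\R^N\ni\xi\mapsto j(\xi):=1-\sqrt{1-|\xi|^2}$ is strictly convex on the closed unit ball. Indeed, writing $j(\xi)=h(|\xi|^2)$ with $h(s)=1-\sqrt{1-s}$, one checks $h'>0$ and $h''>0$ on $[0,1)$, and $s\mapsto|\xi|^2$ is convex, so $j$ is convex; strict convexity on the open ball follows from $h''>0$ together with strict convexity of $|\xi|^2$ in the radial direction and, along a segment lying on a sphere, from strict convexity of $|\xi|$. Hence $\phi\mapsto\irn j(\n\phi)\,dx$ is strictly convex on $\X$ (two functions in $\X$ with the same gradient a.e.\ and the correct decay coincide), and subtracting the linear functional $\langle\rho,\cdot\rangle$ preserves strict convexity. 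Finally, (\ref{it:wlsc}) is immediate: a convex, strongly continuous functional on a Banach space is weakly lower semi-continuous; alternatively, the integral term is weakly lower semi-continuous by convexity of the integrand and Ioffe's theorem, while the linear term is weakly continuous by definition of $\X^*$.

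The only mildly delicate point is the strict convexity of the integrand $j$ along directions tangent to the sphere $|\xi|=\text{const}$, where the naive argument ``$h''>0$'' does not directly apply; this is handled by noting that on such a segment $j$ is affine in $|\xi|^2$ only if the segment degenerates to a point, because $|\xi|$ itself is strictly convex unless the two endpoints are positively proportional. In any case, for the purposes of uniqueness of the minimizer in Theorem \ref{thm:relaxedBI} one could even settle for strict convexity of the map $\xi\mapsto j(\xi)$ restricted to the directions that matter, but the full statement follows from the elementary computation just sketched. I expect no real obstacle here; all steps are routine applications of \eqref{ineq}, Lemma \ref{lemma21}, and standard lower semi-continuity theory.
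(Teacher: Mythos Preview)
Your proof is correct and follows essentially the same route as the paper's: both derive (\ref{it:bb})--(\ref{it:coer}) from inequality \eqref{ineq}, prove (\ref{it:cont}) via dominated convergence after passing to an a.e.-convergent subsequence with an $L^1$ majorant, and deduce (\ref{it:wlsc}) from convexity plus strong continuity. The paper is terser on (\ref{it:conv}), simply asserting strict convexity of $y\mapsto 1-\sqrt{1-|y|^2}$ on the unit ball without your case analysis (which is slightly muddled in its phrasing about ``strict convexity of $|\xi|$'' but correct in substance), so the content is the same.
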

\begin{proof}
Using \eqref{ineq}, we infer that 
\[
I(\phi)
\ge \frac{1}{2} \|\n \phi\|_2^2 -\|\rho\|_{\X^*} \|\n \phi\|_2
\]
for every $\phi\in\X$ and this yields $(\ref{it:bb})$ and $(\ref{it:coer})$.
As regards $(\ref{it:cont})$, we only need to prove  that $J:\X \to \R$, defined by
\[
J(\phi)=\irn  \Big(1 - \sqrt{1-|\nabla \phi|^2}\Big) dx,
\]
is continuous. To this end, we consider a sequence $(\phi_n)_n\subset\X$ that converges to $\phi$ in $\X$. Then, up to a subsequence, $\n \phi_n \to \n \phi$ a.e. in $\RN$ and there exists $w\in L^1(\RN)$ such that $|\n \phi_n|^2\le w$,  $|\n \phi|^2\le w$ a.e. in $\RN$. Thus, by \eqref{ineq},
\[
\left|(1-\sqrt{1-|\n \phi_n|^2}) - (1-\sqrt{1-|\n \phi|^2})\right| 
\le
|\n \phi_n|^2 + |\n \phi|^2 \le 2 w
\]
and then, by Lebesgue's Dominated Convergence Theorem we have that $J(\phi_n)\to J(\phi)$. It is straightforward to check that the convergence holds for the whole sequence.
\\
The strict convexity of $J$ follows from the strict convexity of the real function $y\in B_1\mapsto 1-\sqrt{1-|y|^2}$. Since $J$ is continuous 
and convex whereas $\rho$ is continuous with respect to the weak convergence, the weak lower semi-continuity holds.
\end{proof}

As a consequence of the previous properties, we get the existence of a unique minimizer.
\begin{proposition}\label{pr:min}
The infimum $m=\inf_{\phi\in \X}I(\phi)$ is achieved by a unique  $\phi_\rho \in \X\setminus\{0\}$.
\end{proposition}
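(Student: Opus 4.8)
The plan is to apply the direct method of the Calculus of Variations, using the properties of $I$ collected in Lemma \ref{blabla} together with the topological properties of $\X$ from Lemma \ref{lemma21}. First I would take a minimizing sequence $(\phi_n)_n\subset\X$, i.e.\ $I(\phi_n)\to m$. By the coercivity statement $(\ref{it:coer})$ of Lemma \ref{blabla} — which follows from the explicit lower bound $I(\phi)\ge\frac12\|\n\phi\|_2^2-\|\rho\|_{\X^*}\|\n\phi\|_2$ — the sequence $(\|\n\phi_n\|_2)_n$ is bounded, so $(\phi_n)_n$ is bounded in $\X$. By Lemma \ref{lemma21}$(\ref{it:compact})$ there exists $\phi_\rho\in\X$ such that, along a subsequence, $\phi_n\weakto\phi_\rho$ weakly in $\X$ (and uniformly on compact sets); in particular $\phi_\rho$ lies in $\X$, so it is an admissible competitor. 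Then the weak lower semicontinuity $(\ref{it:wlsc})$ of Lemma \ref{blabla} gives $I(\phi_\rho)\le\liminf_n I(\phi_n)=m$, and since $\phi_\rho\in\X$ we also have $I(\phi_\rho)\ge m$; hence $I(\phi_\rho)=m$ and the infimum is attained.

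For uniqueness I would invoke the strict convexity $(\ref{it:conv})$ of $I$ on the convex set $\X$: if $\phi_1,\phi_2$ were two distinct minimizers, then $\frac12(\phi_1+\phi_2)\in\X$ would satisfy $I\big(\tfrac12(\phi_1+\phi_2)\big)<\tfrac12 I(\phi_1)+\tfrac12 I(\phi_2)=m$, contradicting the minimality of $m$. Thus the minimizer $\phi_\rho$ is unique.

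It remains to check $\phi_\rho\neq 0$, which is the only point requiring a small computation rather than a direct appeal to the lemmas. If $\rho=0$ then $I(\phi)=J(\phi)\ge 0=I(0)$, but then by strict convexity $0$ is the unique minimizer, so the statement $\phi_\rho\in\X\setminus\{0\}$ implicitly presumes $\rho\neq 0$ (one should read it that way, or note $0$ is excluded precisely when $\rho\neq0$). Assuming $\rho\neq0$, pick $\psi\in\X$ with $\langle\rho,\psi\rangle>0$ (possible since $\rho\not\equiv0$ on $\X$); for $t>0$ small, using the upper bound in \eqref{ineq}, $I(t\psi)\le t^2\|\n\psi\|_2^2-t\langle\rho,\psi\rangle<0=I(0)$ once $t<\langle\rho,\psi\rangle/\|\n\psi\|_2^2$. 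Hence $m<0=I(0)$, so the minimizer cannot be $0$.

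The argument is essentially routine once Lemmas \ref{lemma21} and \ref{blabla} are in hand; the only mild subtlety — and thus the ``main obstacle'' — is making sure the weak limit genuinely stays in the constraint set $\{\|\n\phi\|_\infty\le1\}$, which is exactly what the weak closedness $(\ref{it:wc})$ and the compactness statement $(\ref{it:compact})$ of Lemma \ref{lemma21} provide, so no extra work is needed there.
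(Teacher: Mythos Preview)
Your proof is correct and follows essentially the same approach as the paper: the paper simply states that existence and uniqueness follow from Lemma \ref{blabla} (i.e.\ the direct method plus strict convexity) and then verifies $m<0$ via exactly the estimate $I(t\phi)\le t^2\|\n\phi\|_2^2 - t\langle\rho,\phi\rangle<0$ for $t>0$ small. Your version spells out the direct method in more detail and correctly flags that the conclusion $\phi_\rho\neq 0$ tacitly requires $\rho\neq 0$, which the paper assumes throughout.
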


\begin{proof}
The existence and uniqueness follow from Lemma \ref{blabla}. Therefore, we only have to show that $\phi_\rho$ is nontrivial or equivalently that $m<0$. Taking $\phi\in \X$ such that $\langle \rho, \phi \rangle>0$, we compute
\[
I(t \phi)\le t^2 \|\n \phi\|_2^2 - t\langle \rho, \phi \rangle<0, 
\]
for $t>0$ small enough, whence $m<0$.
\end{proof}

We now recall some classical definitions from convex analysis, see \cite{S}. 
\begin{definition}\label{def}
Let $X$ be a real Banach space and $\Psi:X\to (-\infty,+\infty]$ be a convex lower semicontinuous function. Let $D(\Psi)=\{u\in X\mid \Psi(u)<+\infty\}$ be the effective domain of $\Psi$. For $u \in D(\Psi)$, the set
\[
\de \Psi(u)=\{u^*\in X^* \mid \Psi(v)-\Psi(u)\ge \langle u^*,v-u\rangle, \ \forall v\in X\}
\]
is called the {\em subdifferential} of $\Psi$ at $u$. If, moreover, we consider a functional $I=\Psi+\Phi$, with $\Psi$ as above and $\Phi\in C^1(X,\R)$, then $u\in D(\Psi)$ is said to be {\em critical in weak sense} if $-\Phi'(u)\in \de \Psi(u)$, that is
\[
\langle\Phi'( u),v-u\rangle+ \Psi(v)-\Psi(u)\ge 0, \ \forall v\in X.
\]
\end{definition}

\begin{remark}\label{rema}
Observe that, according to the previous definition, $\phi_\rho$ is a critical point in weak sense for the functional $I$ if and only if, for any $\phi \in \X$ we get
\[
\irn \Big(1 - \displaystyle\sqrt{1-|\nabla \phi|^2}\Big) dx -\irn \Big(1 - \sqrt{1-|\nabla \phi_\rho|^2}\Big) dx 
\ge  \langle \rho,\phi - \phi_\rho \rangle,
\]
which is simply equivalent to requiring that $\phi_\rho$ is a minimum for $I$.
\end{remark}

Proposition \ref{pr:min} and Remark \ref{rema} leads easily to Theorem \ref{thm:relaxedBI}.

\subsection{Further properties}

Here we present some properties that are useful in the sequel. We start by recalling the convexity of the functional implies that weak solutions are minimizers and therefore we deduce the uniqueness of the weak solution (if any). As before, for $\rho\in\X^{*}$, we denote the unique minimizer of $I$ in $\X$ by $\phi_\rho$.

\begin{proposition}\label{pr:unicita}
Assume $\rho\in\X^{*}$. If $\phi\in \X$ is a weak solution of \eqref{eq:BI}, then $\phi = \phi_\rho$.
\end{proposition}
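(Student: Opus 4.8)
The plan is to exploit convexity of the functional $I$ to show that any weak solution is automatically a minimizer, and then invoke the uniqueness of the minimizer from Proposition \ref{pr:min}. So suppose $\phi\in\X$ is a weak solution of \eqref{eq:BI} in the sense of Definition \ref{def:ws}, i.e. $\irn \frac{\n\phi\cdot\n\psi}{\sqrt{1-|\n\phi|^2}}\,dx=\langle\rho,\psi\rangle$ for all $\psi\in\X$. I want to conclude that $I(\varphi)\ge I(\phi)$ for every $\varphi\in\X$, which by the strict convexity and Proposition \ref{pr:min} forces $\phi=\phi_\rho$.

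The key step is the pointwise convexity inequality for the integrand $g(y)=1-\sqrt{1-|y|^2}$ on the closed unit ball $\overline{B_1}\subset\RN$. For $|y|<1$ the function is differentiable with $\n g(y)=\frac{y}{\sqrt{1-|y|^2}}$, and since $g$ is convex we have the subgradient inequality $g(z)-g(y)\ge \n g(y)\cdot(z-y)$ for all $z\in\overline{B_1}$ (this extends to $|z|=1$ by continuity of $g$, even though $\n g$ blows up there — note $y$ is the base point, where we only need $|y|<1$). Now take any $\varphi\in\X$; then $\n\varphi(x),\n\phi(x)\in\overline{B_1}$ a.e. Applying the inequality with $z=\n\varphi(x)$ and $y=\n\phi(x)$ at each point where $|\n\phi(x)|<1$, and integrating, gives
\[
\irn\Big(1-\sqrt{1-|\n\varphi|^2}\Big)dx - \irn\Big(1-\sqrt{1-|\n\phi|^2}\Big)dx \ge \irn \frac{\n\phi\cdot(\n\varphi-\n\phi)}{\sqrt{1-|\n\phi|^2}}\,dx.
\]
A point worth checking: on the set $\{|\n\phi|=1\}$ we simply use $g(z)-g(\n\phi)\ge 0$ (since $g\le 1$ on $\overline{B_1}$ and $g(\n\phi)=1$ there), so that set contributes nonnegatively and does not obstruct the inequality — we just do not claim a gradient term there, which is fine since we will see the right-hand integrand makes sense as written on $\{|\n\phi|<1\}$ and we can regard it as zero (or simply restrict the integral) on the null-looking set, consistently with the weak formulation being stated for $\phi\in\X$. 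Using $\psi=\varphi-\phi\in\X$ in the weak formulation, the right-hand side equals $\langle\rho,\varphi-\phi\rangle$. Hence $I(\varphi)=\irn(1-\sqrt{1-|\n\varphi|^2})dx-\langle\rho,\varphi\rangle \ge \irn(1-\sqrt{1-|\n\phi|^2})dx-\langle\rho,\phi\rangle = I(\phi)$.

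Therefore $\phi$ minimizes $I$ over $\X$, and by Proposition \ref{pr:min} (strict convexity of $I$ from Lemma \ref{blabla}\eqref{it:conv}) the minimizer is unique, so $\phi=\phi_\rho$. The main obstacle — really the only delicate point — is handling the degenerate set $\{x:|\n\phi(x)|=1\}$ correctly: one must be sure that the subgradient inequality is applied only with base point in the open ball, and that the contribution of the degenerate set to the convexity estimate is nonnegative rather than problematic, so that the passage from the pointwise inequality to the integrated one, together with the weak formulation \eqref{eq:weakBI}, is rigorous. Everything else — the Sobolev integrability ensuring all integrals converge (Lemma \ref{lemma21}), and the legitimacy of $\varphi-\phi$ as a test function — is routine.
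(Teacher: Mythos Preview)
Your overall strategy---convexity of the integrand plus the weak formulation to show that any weak solution is a minimizer, then uniqueness from Proposition~\ref{pr:min}---is exactly the paper's approach. However, your handling of the degenerate set $\{|\nabla\phi|=1\}$ contains a genuine error. You write that on this set ``$g(z)-g(\nabla\phi)\ge 0$ (since $g\le 1$ on $\overline{B_1}$ and $g(\nabla\phi)=1$ there)'', but this reasoning gives the \emph{opposite} inequality: if $g(\nabla\phi)=1$ is the maximum of $g$, then $g(\nabla\varphi)-g(\nabla\phi)\le 0$. So the contribution of that set is nonpositive, not nonnegative, and your integrated inequality does not follow as stated.

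The paper resolves this at the outset by observing that any weak solution $\phi$ automatically satisfies $|\nabla\phi|<1$ a.e.\ in $\RN$. This is forced by the weak formulation itself: taking $\psi=\phi$ in \eqref{eq:weakBI} gives
\[
\irn \frac{|\nabla\phi|^2}{\sqrt{1-|\nabla\phi|^2}}\,dx=\langle\rho,\phi\rangle<\infty,
\]
which is impossible if $|\nabla\phi|=1$ on a set of positive measure. Once you know the degenerate set is null, your pointwise subgradient inequality with base point $y=\nabla\phi(x)$ is valid a.e., the integration is clean, and the rest of your argument goes through verbatim. You were aware this was ``the only delicate point''; you just need to replace the incorrect sign argument by this observation.
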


\begin{proof}
If $\phi\in \X$ satisfies \eqref{eq:weakBI}, it is easy to see that $|\nabla\phi(x)|<1$ for a.e. $x\in\mathbb{R}^N$. 
By convexity, we get
\begin{equation}
	\label{eq:puntconv}
	1-\sqrt{1-|\nabla\phi_\rho(x)|^2} 
	\ge 
	1- \sqrt{1-|\nabla\phi(x)|^2}
	+\frac{\nabla\phi(x)\cdot(\nabla\phi_\rho(x)-\nabla\phi(x))}{ \sqrt{1-|\nabla\phi(x)|^2}}
\end{equation}
	for a.e. $x\in\mathbb{R}^N$. 
	Moreover, again since $\phi$ satisfies \eqref{eq:weakBI}, we have
	\begin{equation}
	\label{eq:ineqsolperphistar}
	\irn \frac{|\nabla\phi|^2}{\sqrt{1-|\nabla\phi|^2}}\ dx
	- \irn \frac{\nabla\phi\cdot \nabla\phi_\rho}{\sqrt{1-|\nabla\phi|^2}}\ dx
	=
	\langle \rho, \phi - \phi_\rho \rangle.
	\end{equation}
Combining \eqref{eq:puntconv} and \eqref{eq:ineqsolperphistar}, we conclude that $I(\phi_\rho) \ge I(\phi)$.
Uniqueness of the minimizer $\phi_\rho$ of $I$ in $\X$ leads to the conclusion.
\end{proof}

Proposition \ref{pr:unicita} relies on the fact that weak solutions are minimizers. 
Actually, the fact that the weak solutions of \eqref{eq:BI} with Dirichlet boundary conditions on bounded domains with regular sources are unique minimizers of $I$, was already known in \cite{BS}.

A question now arises naturally: does the reverse statement hold? Namely, is it true that the unique minimizer $\phi_\rho$ is always a weak solution of \eqref{eq:BI}? We are not able to answer this question in its full generality but we conjecture a positive answer and the following statement 
goes in that direction.

\begin{proposition}\label{pr:sol-deb}
Assume $\rho\in \X^{*}$ and let $\phi_\rho$ be the unique minimizer of $I$ in $\X$. Then
 $$E=\{x\in \RN \mid |\n \phi_\rho|=1\}$$ is a null set (with respect to Lebesgue measure) and the function $\phi_\rho$ satisfies 
\beq\label{plipli}
\irn \frac{  |\n  \phi_\rho|^2}{ \sqrt{1- |\n \phi_\rho|^2}}\, dx
\le \langle \rho,\phi_\rho \rangle.
\eeq
Moreover, for all $\psi\in \X $, we have the variational inequality
\begin{equation}
\label{eq:ineqcc}
\irn \frac{  |\n  \phi_\rho|^2}{ \sqrt{1- |\n \phi_\rho|^2}}\, dx
-\irn \frac{ \n \phi_\rho \cdot \n \psi}{ \sqrt{1- |\n \phi_\rho|^2}}\, dx
\le \langle \rho,\phi_\rho - \psi \rangle.
\end{equation}
\end{proposition}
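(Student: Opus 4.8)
The plan is to exploit the minimality of $\phi_\rho$ by comparing it with the competitors
\[
\phi_t := (1-t)\phi_\rho + t\psi, \qquad \psi\in\X,\ t\in(0,1],
\]
which belong to $\X$ because $\X$ is convex. Write $J(\phi)=\irn\big(1-\sqrt{1-|\n\phi|^2}\big)\,dx$ and $f(y)=1-\sqrt{1-|y|^2}$ for $|y|\le1$ (so $f$ is convex on $\overline{B_1}$ and $\n f(y)=y/\sqrt{1-|y|^2}$ on $B_1$). From $I(\phi_\rho)\le I(\phi_t)$ and $\langle\rho,\phi_t\rangle=(1-t)\langle\rho,\phi_\rho\rangle+t\langle\rho,\psi\rangle$ we get, after dividing by $t>0$,
\[
\frac{J(\phi_\rho)-J(\phi_t)}{t}\ \le\ \langle\rho,\phi_\rho-\psi\rangle \qquad\text{for every }t\in(0,1].
\]
Everything then reduces to letting $t\to0^+$ on the left; the whole difficulty lies in the non‑differentiability of $J$ on the set $E$.

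I would first take $\psi=0$, so that $\phi_t=(1-t)\phi_\rho$ and $|\n\phi_t|=(1-t)|\n\phi_\rho|\le 1-t<1$ a.e. For a.e.\ fixed $x$, the quantity $t\mapsto\big(f(\n\phi_\rho(x))-f((1-t)\n\phi_\rho(x))\big)/t$ is a chord slope of the convex function $s\mapsto f(s\,\n\phi_\rho(x))$; hence it is nonnegative (radial monotonicity of $f$), non‑increasing in $t$, and it converges as $t\to0^+$ to $|\n\phi_\rho(x)|^2/\sqrt{1-|\n\phi_\rho(x)|^2}$ where $|\n\phi_\rho(x)|<1$, and to $+\infty$ where $x\in E$. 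Monotone convergence then shows that the nonnegative, $[0,+\infty]$‑valued limit function $g$ (equal to $|\n\phi_\rho|^2/\sqrt{1-|\n\phi_\rho|^2}$ off $E$ and to $+\infty$ on $E$) satisfies
\[
\irn g\,dx\ =\ \lim_{t\to0^+}\frac{J(\phi_\rho)-J((1-t)\phi_\rho)}{t}\ \le\ \langle\rho,\phi_\rho\rangle\ <\ \infty .
\]
Finiteness of the right‑hand side forces $|E|=0$ and, simultaneously, gives the estimate \eqref{plipli}.

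For general $\psi\in\X$ I would return to the comparison inequality and pass to the limit by Fatou's lemma. Since $|E|=0$ and $f$ is smooth on $B_1$, one has, for a.e.\ $x\in\RN$,
\[
\lim_{t\to0^+}\frac{f(\n\phi_\rho)-f(\n\phi_t)}{t}\ =\ \n f(\n\phi_\rho)\cdot(\n\phi_\rho-\n\psi)\ =\ \frac{|\n\phi_\rho|^2-\n\phi_\rho\cdot\n\psi}{\sqrt{1-|\n\phi_\rho|^2}},
\]
while convexity of $f$ gives $f(\n\phi_t)\le(1-t)f(\n\phi_\rho)+tf(\n\psi)$, hence the uniform‑in‑$t$ integrable minorant
\[
\frac{f(\n\phi_\rho)-f(\n\phi_t)}{t}\ \ge\ f(\n\phi_\rho)-f(\n\psi)\ \ge\ -|\n\psi|^2\ \in L^1(\RN).
\]
Fatou's lemma together with the comparison bound (valid for every $t$) then yields
\[
\irn\frac{|\n\phi_\rho|^2-\n\phi_\rho\cdot\n\psi}{\sqrt{1-|\n\phi_\rho|^2}}\,dx\ \le\ \liminf_{t\to0^+}\frac{J(\phi_\rho)-J(\phi_t)}{t}\ \le\ \langle\rho,\phi_\rho-\psi\rangle,
\]
which is exactly \eqref{eq:ineqcc} once one records that $\irn\big(\n\phi_\rho\cdot\n\psi\big)/\sqrt{1-|\n\phi_\rho|^2}\,dx$ converges absolutely (this follows from \eqref{plipli} and $\|\n\psi\|_\infty\le1$, splitting $\RN$ according to whether $|\n\phi_\rho|\le\tfrac12$ or not).

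The main obstacle is precisely the bad set $E$ where $J$ is not differentiable: the argument is arranged so that one never differentiates there. The rescaled competitor $(1-t)\phi_\rho$ pushes the gradient strictly below $1$, and $E$ becomes the locus where the (monotone, nonnegative) difference quotient blows up — this is the very mechanism that forces $|E|=0$. The second delicate point is the interchange of limit and integral: for $\psi=0$ monotone convergence applies directly, but for general $\psi$ one genuinely needs the integrable minorant $f(\n\phi_\rho)-f(\n\psi)$ coming from convexity in order to invoke Fatou. One should also keep track of the elementary ingredients used throughout: $\X$ is convex (so $\phi_t\in\X$) and $\langle\rho,\cdot\rangle$ is finite and linear on $\X$ since $\rho\in\X^*$, which is what makes all the bounds meaningful.
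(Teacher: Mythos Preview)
Your proof is correct and follows the same overall strategy as the paper: compare $\phi_\rho$ with the convex combinations $\phi_t=(1-t)\phi_\rho+t\psi$ and let $t\to0^+$. The technical implementation differs slightly. For $\psi=0$, the paper computes the difference quotient explicitly, isolating a term $\sqrt{(2-t)/t}\,|E|$ whose boundedness forces $|E|=0$, and then applies Fatou; you instead recognise the difference quotient as a monotone family of chord slopes and invoke monotone convergence directly, which is a little cleaner. For general $\psi$, the paper passes to the limit by dominated convergence, building an explicit $L^1$ majorant via the splitting $\{|\nabla\phi_\rho|\le\tfrac12\}$ versus its complement together with \eqref{plipli}; you instead use Fatou with the integrable minorant $f(\nabla\phi_\rho)-f(\nabla\psi)\ge -|\nabla\psi|^2$ coming from convexity. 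Your route avoids the need for a two-sided bound on the difference quotient and uses only the structural convexity of $f$, at the cost of obtaining the inequality via $\liminf$ rather than an honest limit; the paper's dominated-convergence argument actually shows that the limit exists. Both arrive at \eqref{eq:ineqcc}, and your final remark on the absolute convergence of $\int \nabla\phi_\rho\cdot\nabla\psi/\sqrt{1-|\nabla\phi_\rho|^2}$ matches the paper's splitting argument exactly.
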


\begin{proof}
Since for every $t\in [0,1]$ and $\psi\in \X$, $\phi_t=\phi_\rho +t (\psi - \phi_\rho)\in \X$, we have $I(\phi_\rho)\le I(\phi_t)$, namely
\begin{equation}
\label{eq:juut}
\irn \Big( 1-\sqrt{1-|\nabla\phi_\rho(x)|^2} \Big) dx -  \irn \Big( 1-\displaystyle\sqrt{1-|\nabla\phi_{t}(x)|^2} \Big) dx
\le t \langle \rho,\phi_\rho-\psi \rangle.
 \end{equation}
In the particular case $\psi=0$, we have 
\[
\int_{E^c} \frac{\sqrt{1-(1-t)^2|\n \phi_\rho|^2}-\sqrt{1-|\n \phi_\rho|^2}}{t}\, dx
+\frac{\sqrt{2-t}}{\sqrt{t}}|E|
\le \langle \rho,\phi_\rho \rangle
\]
for all $t\in(0,1]$. Since both terms in the left hand side are nonnegative, we infer that $|E|=0$
and moreover
\[
\irn \frac{  (2-t)|\n  \phi_\rho|^2}{ \sqrt{1- |\n \phi_\rho|^2}+ \sqrt{1- (1-t)^2|\n \phi_\rho|^2}}\, dx
\le \langle \rho,\phi_\rho \rangle.
\]
By Fatou's Lemma, letting $t$ go to zero, we get \eqref{plipli} and therefore
\beq\label{eq:L1}
\frac{  |\n  \phi_\rho|^2}{ \sqrt{1- |\n \phi_\rho|^2}}\in L^1(\RN). 
\eeq
Considering now $\psi\neq 0$ in \eqref{eq:juut}, as $|E|=0$, we deduce that
\begin{equation}
\label{eq:uut}
\irn \frac{ (2-t) |\n \phi_\rho|^2  - 2(1-t)\n \phi_\rho \cdot \n \psi - t  |\n \psi |^2}{ \sqrt{1- |\n \phi_\rho|^2}+ \sqrt{1- |\n \phi_t|^2}}\, dx\le
\langle \rho,\phi_\rho- \psi \rangle.
\end{equation}
Aiming to apply Lebesgue's Dominated Convergence Theorem in \eqref{eq:uut}, we first notice that
\[
 \left|\frac{ (2-t) |\n \phi_\rho|^2  - 2(1-t)\n \phi_\rho \cdot \n \psi - t  |\n \psi |^2}{ \sqrt{1- |\n \phi_\rho|^2}+ \sqrt{1- |\n \phi_t|^2}}\right| 
 \le C\left(\frac{ |\n \phi_\rho |^2}{ \sqrt{1- |\n \phi_\rho|^2}}
+ \frac{ |\n \psi|^2}{ \sqrt{1- |\n \phi_\rho|^2}}\right) 
\]
The first term of the right hand side is $L^1(\RN)$ by \eqref{eq:L1}. To estimate the second one, we observe that 
$$
\int_{A_{\rho}} \frac{ |\n \psi|^2 }{ \sqrt{1- |\n \phi_\rho|^2}}\, dx
\le  C\int_{\RN} |\n \psi|^2\, dx
$$
and 
$$\int_{A_{\rho}^{c}} \frac{ |\n \psi|^2}{ \sqrt{1- |\n \phi_\rho|^2}}\, dx \le C \int_{\RN} \frac{ |\n \phi_\rho |^2}{ \sqrt{1- |\n \phi_\rho|^2}}\, dx,$$
where we have set $A_{\rho}:=\{x\in \R^{N}\mid |\n \phi_\rho|\le 1/2\}$.
Since we are now allowed to take the limit as $t\to 0^+$ in \eqref{eq:uut}, we get \eqref{eq:ineqcc}.
\end{proof}

\begin{remark}
\label{rem:nehary}
If $\phi_\rho$ satisfies further
\begin{equation*}
\irn \frac{  |\n  \phi_\rho|^2}{ \sqrt{1- |\n \phi_\rho|^2}}\, dx
= \langle \rho,\phi_\rho \rangle,
\end{equation*}
then, by \eqref{eq:ineqcc}, it is easy to see that $\phi_\rho$ is a weak solution of \eqref{eq:BI}.
\end{remark}

\begin{remark}
It is clear from the proof of Proposition \ref{pr:sol-deb} that it is enough to assume $\psi\in \D $ together with $|\nabla\psi|\in L^\infty(\RN)$ to get
	\[
	\irn \frac{\nabla \phi_\rho \cdot\nabla\psi}{\sqrt{1-|\n\phi_\rho|^2}}\ dx\in \R.
	\]
\end{remark}

The next lemma states a useful convergence to prove Theorem \ref{thm:exsol}.

\begin{lemma}\label{le:cont}
Assume $\rho\in \X^{*}$ and let $\phi_\rho$ be the unique minimizer of $I$ in $\X$. If $(\psi_n)_n\subset \D $ is such that $\|\n \psi_n\|_\infty\le C$ for some $C>0$ and $\psi_{n}\to\psi$ in $\D$ then, up to a subsequence, 
\[
\lim_{n\to\infty}\irn \frac{ \n \phi_\rho \cdot \n \psi_n}{ \sqrt{1- |\n \phi_\rho|^2}} \, dx
= \irn \frac{ \n \phi_\rho \cdot \n \psi}{ \sqrt{1- |\n \phi_\rho|^2}} \, dx.
\]
\end{lemma}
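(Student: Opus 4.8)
The plan is to exploit the key integrability fact \eqref{eq:L1} established in Proposition \ref{pr:sol-deb}, namely that $|\n\phi_\rho|^2/\sqrt{1-|\n\phi_\rho|^2}\in L^1(\RN)$, together with the fact that the gradients $\n\psi_n$ are uniformly bounded in $L^\infty$ and converge in $L^2$. First I would pass to a subsequence so that $\n\psi_n\to\n\psi$ a.e.\ in $\RN$ (this uses $\psi_n\to\psi$ in $\D$, hence in $L^2$ with a.e.-convergent subsequence) and so that there is a dominating function $h\in L^2(\RN)$ with $|\n\psi_n|\le h$ a.e., with $|\n\psi_n|\le C$ a.e.\ also retained. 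The integrand $\n\phi_\rho\cdot\n\psi_n/\sqrt{1-|\n\phi_\rho|^2}$ then converges a.e.\ to $\n\phi_\rho\cdot\n\psi/\sqrt{1-|\n\phi_\rho|^2}$, so it remains to produce an $L^1$ dominating function independent of $n$ and invoke Lebesgue's Dominated Convergence Theorem.

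The construction of the dominating function is the heart of the argument and is really just the same splitting used at the end of the proof of Proposition \ref{pr:sol-deb}. Pointwise,
\[
\left|\frac{\n\phi_\rho\cdot\n\psi_n}{\sqrt{1-|\n\phi_\rho|^2}}\right|
\le \frac{|\n\phi_\rho|\,|\n\psi_n|}{\sqrt{1-|\n\phi_\rho|^2}}.
\]
On the set $A_\rho=\{|\n\phi_\rho|\le 1/2\}$ one has $\sqrt{1-|\n\phi_\rho|^2}\ge \sqrt{3}/2$, so the right-hand side is bounded by $C|\n\phi_\rho|\,|\n\psi_n|\le C|\n\phi_\rho|\,h$, which lies in $L^1(\RN)$ since both factors are in $L^2(\RN)$ (recall $\phi_\rho\in\X\subset\D$). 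On the complement $A_\rho^c=\{|\n\phi_\rho|>1/2\}$ one uses $|\n\psi_n|\le C$ and $|\n\phi_\rho|\le 2|\n\phi_\rho|^2$ to bound the integrand by $2C\,|\n\phi_\rho|^2/\sqrt{1-|\n\phi_\rho|^2}$, which is in $L^1(\RN)$ by \eqref{eq:L1}. Adding the two bounds gives an $n$-independent dominating function in $L^1(\RN)$, and DCT yields the claimed limit along the chosen subsequence.

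The main (mild) obstacle is purely bookkeeping: one must be careful that passing to a subsequence is enough, since the statement of the lemma only claims convergence up to a subsequence — so no Urysohn-type argument to recover the full sequence is needed, and in fact the limit is the same for every subsequential limit because the right-hand side is independent of the subsequence, so a standard subsequence-of-subsequence argument would in fact give convergence of the whole sequence if desired. A second point to keep in mind is that $\phi_\rho$ itself is only weakly spacelike a priori, so $\sqrt{1-|\n\phi_\rho|^2}$ may vanish on a set; but by Proposition \ref{pr:sol-deb} that set $E$ is Lebesgue-null, so all the pointwise manipulations above are valid a.e., and no further regularity of $\phi_\rho$ is required.
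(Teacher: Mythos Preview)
Your proof is correct and follows essentially the same route as the paper: both arguments split $\RN$ into $A_\rho=\{|\n\phi_\rho|\le 1/2\}$ and its complement, use \eqref{eq:L1} to dominate on $A_\rho^c$, and pass to a subsequence to apply Lebesgue's Dominated Convergence Theorem. The only cosmetic difference is that on $A_\rho$ the paper estimates the integral of the difference directly by Cauchy--Schwarz (using $\|\n(\psi_n-\psi)\|_2\to 0$), whereas you invoke an $L^2$ majorant $h$ together with DCT; both are standard and equally valid.
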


\begin{proof}
Keeping the notation $A_{\rho}=\{x\in \R^{N}\mid |\n \phi_\rho|\le 1/2\}$, we have
$$
\int_{A_{\rho}} \frac{ \n \phi_\rho \cdot (\n \psi_n - \n \psi) }{ \sqrt{1- |\n \phi_\rho|^2}} \, dx
\le C \left(\irn |\n \phi_\rho |^{2} \, dx\right)^{1/2} \left(\irn |\n (\psi_n - \psi)|^2 \, dx\right)^{1/2}$$
whereas, on $A_{\rho}^{c}$,
$$
\left|\frac{ \n \phi_\rho \cdot (\n \psi_n - \n \psi) }{ \sqrt{1- |\n \phi_\rho|^2}} \right|\le 
 C \frac{ |\n \phi_\rho |^{2}}{ \sqrt{1- |\n \phi_\rho|^2}}.
$$
Recalling that 
$$\frac{ |\n \phi_\rho |^2 }{ \sqrt{1- |\n \phi_\rho|^2}} \in L^1(\RN),$$
we can apply Lebesgue's Dominated Convergence Theorem on $A_{\rho}^{c}$ so that the conclusion follows.
\end{proof}

In the spirit of \cite[Lemma 1.2]{BS}, we next provide a monotonicity property of the map $\rho\in \X^* \mapsto \phi_\rho$, where $\phi_\rho$ is the unique minimum associated to $\rho$. We first order the elements of $\X^{*}$.

\begin{definition}
\label{def:rhopos}
Let $\rho_1,\rho_2\in \X^*$. We say that  $\rho_1\le \rho_2$, if we have $\langle \rho_1,\vfi\rangle\le \langle \rho_2,\vfi\rangle$ for any $\varphi\in \X$ with $\vfi\ge 0$.
\end{definition}

The next lemma is a {\em comparison principle} for minimizers.

\begin{lemma}\label{monotonia}
If $\rho_1,\rho_2\in \X^*$ are such that $\rho_1\le \rho_2$, then $\phi_{\rho_{1}}(x)\le \phi_{\rho_{2}} (x)$ for all $x\in \RN$.
\end{lemma}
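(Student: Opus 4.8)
The plan is to adapt the classical argument for comparison of minimizers of convex functionals: truncate $\phi_{\rho_1}$ against $\phi_{\rho_2}$ and exploit minimality together with the ordering $\rho_1\le\rho_2$. Concretely, set $w=(\phi_{\rho_1}-\phi_{\rho_2})^+$, i.e. $w=\max\{\phi_{\rho_1}-\phi_{\rho_2},0\}$. First I would check that $w\in\X$: since $\phi_{\rho_1},\phi_{\rho_2}\in\X$, their difference lies in $D^{1,2}(\RN)$ and is Lipschitz with $\|\n(\phi_{\rho_1}-\phi_{\rho_2})\|_\infty\le 2$; the positive part of a $D^{1,2}$ function is again in $D^{1,2}$ (with $\n w=\n(\phi_{\rho_1}-\phi_{\rho_2})\mathbf 1_{\{\phi_{\rho_1}>\phi_{\rho_2}\}}$ a.e.), and taking the positive part does not increase the Lipschitz constant, so in particular one has the slightly more delicate point that $\phi_{\rho_2}+w$ and $\phi_{\rho_1}-w$ are again weakly spacelike — this needs a short argument because $\|\n\phi_{\rho_1}\|_\infty\le 1$ and $\|\n\phi_{\rho_2}\|_\infty\le 1$ do not immediately give $\|\n(\phi_{\rho_2}+w)\|_\infty\le 1$. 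The key observation is that on the set $\{\phi_{\rho_1}>\phi_{\rho_2}\}$ one has $\n(\phi_{\rho_2}+w)=\n\phi_{\rho_1}$ and on the complement $\n(\phi_{\rho_2}+w)=\n\phi_{\rho_2}$, so the gradient of $\phi_{\rho_2}+w$ equals, a.e., either $\n\phi_{\rho_1}$ or $\n\phi_{\rho_2}$, hence has modulus $\le1$ a.e.; similarly $\phi_{\rho_1}-w$ has gradient equal a.e. to $\n\phi_{\rho_2}$ or $\n\phi_{\rho_1}$. Thus both $\phi_{\rho_2}+w$ and $\phi_{\rho_1}-w$ belong to $\X$, and moreover they are admissible competitors.

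Next I would write down the two minimality inequalities. Comparing $\phi_{\rho_1}$ with $\phi_{\rho_1}-w$ gives $I_{\rho_1}(\phi_{\rho_1})\le I_{\rho_1}(\phi_{\rho_1}-w)$, and comparing $\phi_{\rho_2}$ with $\phi_{\rho_2}+w$ gives $I_{\rho_2}(\phi_{\rho_2})\le I_{\rho_2}(\phi_{\rho_2}+w)$, where $I_{\rho_i}$ denotes the functional \eqref{eq:functionalBI} associated to $\rho_i$. Adding these and using that $J(\phi)=\irn(1-\sqrt{1-|\n\phi|^2})\,dx$ satisfies the pointwise identity $J(\phi_{\rho_1})+J(\phi_{\rho_2})=J(\phi_{\rho_1}-w)+J(\phi_{\rho_2}+w)$ — this holds because, pointwise a.e., the unordered pair of gradients $\{\n\phi_{\rho_1},\n\phi_{\rho_2}\}$ coincides with the unordered pair $\{\n(\phi_{\rho_1}-w),\n(\phi_{\rho_2}+w)\}$ (they are equal on $\{\phi_{\rho_1}>\phi_{\rho_2}\}$ after swapping, and identical elsewhere), so the integrands match — the two $J$-contributions cancel and one is left with a pure inequality on the linear terms:
\[
0\le -\langle\rho_1,w\rangle+\langle\rho_2,w\rangle=\langle\rho_2-\rho_1,w\rangle.
\]
Actually this inequality is automatically true since $w\ge 0$ and $\rho_1\le\rho_2$, so the cancellation argument alone does not yet force $w=0$; I would therefore instead combine the two minimality inequalities to get a strict contradiction unless $w\equiv 0$. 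Precisely, from $I_{\rho_1}(\phi_{\rho_1})\le I_{\rho_1}(\phi_{\rho_1}-w)$ and $I_{\rho_2}(\phi_{\rho_2})\le I_{\rho_2}(\phi_{\rho_2}+w)$ together with the $J$-cancellation identity we obtain $\langle\rho_2-\rho_1,w\rangle\ge 0$ with equality forced by summation being an equality; more usefully, uniqueness of the minimizer (Proposition \ref{pr:min}) enters: if $w\not\equiv 0$ then either $\phi_{\rho_1}-w\ne\phi_{\rho_1}$ is a strictly-better-or-equal competitor for $I_{\rho_1}$, forcing $\phi_{\rho_1}-w=\phi_{\rho_1}$ by uniqueness, a contradiction, or the analogous statement for $\rho_2$. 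Combining the two inequalities with the identity shows both must be equalities, and strict convexity of $J$ (Lemma \ref{blabla}\eqref{it:conv}) then yields $\phi_{\rho_1}-w=\phi_{\rho_1}$, i.e. $w\equiv 0$, hence $\phi_{\rho_1}\le\phi_{\rho_2}$ a.e., and then everywhere by continuity (Lemma \ref{lemma21}\eqref{it:C0}).

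The main obstacle I anticipate is the careful verification that truncated competitors stay inside $\X$ — i.e. that taking positive parts does not break the pointwise spacelike constraint $|\n\phi|\le1$ — and the bookkeeping of the pointwise gradient identity that makes the nonlinear terms cancel exactly; once that algebra is in place, the conclusion follows cleanly from uniqueness of the minimizer and strict convexity. An alternative, perhaps cleaner, route would be to run the same truncation argument at the level of the approximate problems where the functional is smooth (using the variational inequality \eqref{eq:ineqcc} and an integration-by-parts / Stampacchia-type argument against the test function $w$) and then pass to the limit, but the direct convexity argument above avoids introducing approximations.
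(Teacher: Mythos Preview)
Your approach is essentially identical to the paper's: both arguments introduce the competitors $\psi_1=\min(\phi_{\rho_1},\phi_{\rho_2})=\phi_{\rho_1}-w$ and $\psi_2=\max(\phi_{\rho_1},\phi_{\rho_2})=\phi_{\rho_2}+w$, exploit the pointwise gradient-swap identity to cancel the nonlinear $J$-terms, apply $\rho_1\le\rho_2$ on the linear part, and conclude via uniqueness of the minimizer. One sign slip to fix: after the $J$-cancellation, summing the two minimality inequalities actually yields $\langle\rho_2-\rho_1,w\rangle\le 0$ (not $\ge 0$), which together with the hypothesis $\langle\rho_2-\rho_1,w\rangle\ge 0$ forces both inequalities to be equalities and hence $w\equiv 0$ by uniqueness---this is precisely the contradiction the paper obtains, just organized slightly differently.
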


\begin{proof}
Let $\phi_i= \phi_{\rho_{i}}$. Suppose by contradiction that the open set $\O^+:=\{x\in \RN \mid \phi_1(x)>\phi_2(x)\}$ is non-empty. Of course, by continuity, we have $\phi_1=\phi_2$ on $\de \O^+$. We define $\O^-:=\RN \setminus \O^+$, which may be the empty set and we introduce the functionals $I_{1},I_{2}:\X\to\R$, where
$$
I_i(\phi)=\irn \Big(1-\sqrt{1-|\nabla \phi|^2}\Big) \ dx- \langle\rho_i, \phi\rangle.
$$
We also fix the notation 
$$J_\pm(\phi)=\int_{\O^\pm} \Big(1-\sqrt{1-|\nabla \phi|^2}\Big)\ dx.$$
Let
\[
\psi_1(x)=\left\{
\begin{array}{ll}
\phi_2 (x)& \hbox{in }\O^+,
\\ 
\phi_1 (x)& \hbox{in }\O^-,
\end{array}\right.
\]
and
\[
\psi_2(x)=\left\{
\begin{array}{ll}
\phi_1(x) & \hbox{in }\O^+,
\\ 
\phi_2(x) & \hbox{in }\O^-.
\end{array}\right.
\]
Clearly, $\psi_i\in \X$ and $\psi_2 - \phi_2 = \phi_1 - \psi_1 \ge 0$. Then
\begin{align*}
I_2(\psi_2)
&=
J_+(\phi_1) + J_-(\phi_2) - \langle\rho_2,\psi_2\rangle\\
&=
I_2(\phi_2) 
+J_+(\phi_1) -J_+(\phi_2)   - \langle\rho_2,\phi_1 - \psi_1 \rangle\\
&  \le I_2(\phi_2) 
+J_+(\phi_1) -J_+(\phi_2)   - \langle\rho_1,\phi_1 - \psi_1 \rangle \\
& =
I_2(\phi_2) 
+I_1(\phi_1)  -I_1(\psi_1).
\end{align*}
Now, since $\phi_{1}$ is the unique minimizer of $I_{1}$ and $\psi_{1}$ does not coincide with $\phi_{1}$ on an open set, we conclude that 
$$I_2(\phi_2) 
+I_1(\phi_1)  -I_1(\psi_1)< I_2(\phi_2),$$
and we reach a contradiction with the minimality of $\phi_{2}$. 
\end{proof}

We conclude this section with the following property which will be useful in the sequel.

\begin{lemma}\label{compactset}
Let $(\rho_n)_n \subset \X^*$ be a bounded sequence. Then there exists $\bar \phi\in \X$ such that  $\phi_{\rho_n}\rightharpoonup \bar \phi$ weakly in $\X$ and uniformly on compact sets.
\end{lemma}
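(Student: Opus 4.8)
\textbf{Proof plan for Lemma \ref{compactset}.}

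The plan is to reduce the statement to Lemma \ref{lemma21}(\ref{it:compact}), which already provides the desired conclusion for an arbitrary bounded sequence in $\X$; thus it suffices to show that the sequence of minimizers $(\phi_{\rho_n})_n$ is bounded in $\X$. First I would observe that, since $(\rho_n)_n$ is bounded in $\X^*$, there is a constant $M>0$ with $\|\rho_n\|_{\X^*}\le M$ for all $n$. Using the lower bound obtained in the proof of Lemma \ref{blabla}, namely
\[
I(\phi)\ge \tfrac12\|\n\phi\|_2^2-\|\rho_n\|_{\X^*}\|\n\phi\|_2\ge \tfrac12\|\phi\|_\X^2-M\|\phi\|_\X,
\]
together with the fact that $\phi_{\rho_n}$ minimizes the corresponding functional $I_{\rho_n}$, I would compare $I_{\rho_n}(\phi_{\rho_n})$ with $I_{\rho_n}(0)=0$: from $I_{\rho_n}(\phi_{\rho_n})\le 0$ and the displayed inequality we get $\tfrac12\|\phi_{\rho_n}\|_\X^2-M\|\phi_{\rho_n}\|_\X\le 0$, hence $\|\phi_{\rho_n}\|_\X\le 2M$ for every $n$. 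This is the only quantitative step; everything else is soft.

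Once the uniform bound $\|\phi_{\rho_n}\|_\X\le 2M$ is in hand, I would invoke Lemma \ref{lemma21}(\ref{it:compact}) applied to the bounded sequence $(\phi_{\rho_n})_n\subset\X$: there exists $\bar\phi\in\X$ such that, up to a subsequence, $\phi_{\rho_n}\rightharpoonup\bar\phi$ weakly in $\X$ and $\phi_{\rho_n}\to\bar\phi$ uniformly on compact subsets of $\RN$ (the latter coming from the equi-Lipschitz bound $\|\n\phi_{\rho_n}\|_\infty\le1$ and the Ascoli--Arzel\`a theorem, combined with the uniform decay at infinity from Lemma \ref{lemma21}(\ref{it:C0})). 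Since $\X$ is weakly closed by Lemma \ref{lemma21}(\ref{it:wc}), the limit $\bar\phi$ indeed lies in $\X$, which completes the argument.

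There is essentially no serious obstacle here: the lemma is a routine compactness statement, and the only mild point to be careful about is that the constant $M$ bounding $\|\rho_n\|_{\X^*}$ is independent of $n$, so the resulting bound $\|\phi_{\rho_n}\|_\X\le 2M$ is uniform and Lemma \ref{lemma21}(\ref{it:compact}) applies. I would also note in passing that the statement as given asserts existence of a convergent subsequence (the ``up to a subsequence'' is implicit, exactly as in Lemma \ref{lemma21}(\ref{it:compact})), so no uniqueness of the limit is claimed and none needs to be proved; different subsequences could in principle select different limits, although in fact the monotonicity and uniqueness results above would let one identify $\bar\phi$ when $(\rho_n)_n$ itself converges in $\X^*$.
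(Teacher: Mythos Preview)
Your proposal is correct and follows essentially the same route as the paper: both reduce to Lemma~\ref{lemma21}(\ref{it:compact}) by showing $(\phi_{\rho_n})_n$ is bounded in $\X$, using $I_{\rho_n}(\phi_{\rho_n})\le 0$ together with the coercivity estimate $\tfrac12\|\n\phi\|_2^2\le I_{\rho_n}(\phi)+\|\rho_n\|_{\X^*}\|\n\phi\|_2$ from \eqref{ineq}. The paper writes the chain of inequalities slightly more directly, but the argument is the same.
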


\begin{proof}
For short, we set  $\phi_n = \phi_{\rho_n}$ and
$$
I_n(\phi)=\irn \Big(1-\sqrt{1-|\nabla \phi|^2}\Big) \ dx- \langle\rho_n, \phi\rangle.
$$ 
By Assertion $(\ref{it:compact})$ of Lemma \ref{lemma21}, it is enough to show that $(\phi_{n})_{n}$ is bounded. 
Since $(\rho_n)_n$ is a bounded sequence in $\X^*$ and  $I_{n}(\phi_n)\le 0$, we have
\[
\frac{1}{2}\|\n \phi_n\|^2_2
\le \irn \Big( 1- \sqrt{1-|\n \phi_n|^2}\Big) \ dx
\le \langle \rho_n ,\phi_n \rangle
\le C\|\n \phi_n\|_2, 
\]
namely $(\phi_n)_n$ is a bounded sequence in $\X$. 
\end{proof}

\section{Weak formulation for radially distributed or bounded charge densities}\label{Sec:radial}

\subsection{The case of a radial charge}
We now turn our attention to radially distributed charges. In this case, we are able to recover the weak formulation from the relaxed equation. Indeed we build a dense set, within the set of test functions, of admissible variations. The argument is borrowed from \cite{ST} where it was used to handle a monotonicity constraint. 

\medbreak

We first precise the meaning of radially distributed charge density. For $\tau \in O(N)$,  $\phi\in \X$ and $\rho\in  \X^*$, we define $\phi^\tau\in \X$ as 
$\phi^\tau(x)=\phi(\tau x)$, for all $x\in \RN$, and $\rho^\tau\in  \X^*$ as $\langle \rho^\tau,\psi \rangle=\langle \rho,\psi^\tau \rangle$,
for all $\psi\in \X$.

\begin{definition}\label{def:rad}
We say that $\rho\in \X^*$ is radially distributed if $\rho^\tau=\rho$, for any $\tau \in O(N)$.
\end{definition} 
We next define
\[
\X_{\rm rad}=\{\phi\in \X \mid \phi^{\tau} = \phi \text{ for every }\tau \in O(N)\}
\]
and if $\phi\in \X_{\rm rad}$, in order to simplify the notations, we keep $\phi$ to denote the function $r=|x|\in\R_+\mapsto \phi(r)$ of a single real variable. We make furthermore a similar identification for the radially distributed maps $\rho \in \X^*$.

\medbreak 

Given these definitions, we are ready to prove that radial minimizers are weak solutions.

\begin{proof}[Proof of Theorem \ref{thm:exsol}]
As starting point, let us show that $\phi_\rho \in \X_{\rm rad}$. Indeed, for any $\tau \in O(N)$, since $\rho$ radially distributed, we have
$I(\phi_\rho^\tau)=I(\phi_\rho)$ and so we conclude by the uniqueness of the minimum.

\medbreak

\noindent We now prove that $\phi_{\rho}$ is a weak solution of \eqref{eq:BI}. Define, for $k\in\N\setminus\{0\}$, the sets
\[
E_k=\left\{r\ge 0\;\vline\;|\phi_\rho'(r)| \ge 1-\frac{1}{k}\right\}.
\]
Since 
$$E=\{r\ge 0 \mid |\phi_\rho'(r)|=1\}$$ 
is a null set, we have that $\left|\bigcap_{k\ge 1} E_k\right|=0$.
\\
Take $\psi\in \X_{\rm rad}\cap C^\infty_c(\RN)$ with $\supp \psi\subset [0,R]$ and let
\[
\psi_k(r)=-\int_{r}^{+\infty} \psi'(s) [1-\chi_{E_k}(s)] ds.
\]
Of course, by construction, $\supp \psi_k\subset [0,R]$, for any $k\ge 1$. Moreover, if $|t|$ is sufficiently small, then $\phi_\rho+t\psi_k\in \X$.
Indeed, since $(\phi_\rho+t\psi_k)'= \phi_\rho'+t\psi'[1-\chi_{E_k}]$,
then, if $r\in E_k$
\[
|(\phi_\rho+t\psi_k)'(r)|=|\phi_\rho'(r)|\le 1,
\]
otherwise, taking  $|t|\le\frac{1}{k\|\psi'\|_\infty}$, we have
\[
|(\phi_\rho+t\psi_k)'(r)|
\le
|\phi_\rho'(r)| + |t| \|\psi'\|_\infty
<
1 - \frac{1}{k} + |t| \|\psi'\|_\infty
\le 1.
\]
Now, since $\phi_\rho$ is the minimizer of $I$, arguing as in the proof of Proposition \ref{pr:sol-deb}, we infer that
\begin{equation}\label{eq:ELeq}
\lim_{t\to 0} \frac{I(\phi_\rho+t\psi_k)-I(\phi_\rho)}{t}
=\o_N\int_0^{+\infty} \frac{\phi_\rho'\psi'}{\sqrt{1-|\phi_\rho'|^2}}[1-\chi_{E_k}] r^{N-1} dr
- \langle \rho,\psi_k\rangle = 0,
\end{equation}
for every $k\ge 1$. Since $E_{k+1}\subset E_k$ and $|E_k|\to 0$, as $k\to +\infty$, we have that $\chi_{E_k}\to 0$ a.e. in $\mathbb{R}^N$ and so, by Lebesgue's Dominated Convergence Theorem, 
\[
\int_0^{+\infty} \frac{\phi_\rho'\psi'}{\sqrt{1-|\phi_\rho'|^2}}[1-\chi_{E_k}] r^{N-1} dr
\to
\int_0^{+\infty} \frac{\phi_\rho'\psi'}{\sqrt{1-|\phi_\rho'|^2}} r^{N-1} dr.
\]
Moreover, since it is easily seen that $\psi_k \to \psi$ in $\X$, we have
\[
\langle \rho,\psi_k\rangle \to \langle \rho,\psi\rangle,
\]
as $k\to \infty$.
Hence, for any $\psi\in \X_{\rm rad}\cap C^\infty_c(\RN)$, taking the limit in \eqref{eq:ELeq} as $k\to\infty$, we conclude that
\beq\label{solorad}
\irn \frac{\n \phi_\rho \cdot \n \psi}{\sqrt{1-|\n \phi_\rho|^2}} \ dx
= \langle \rho,\psi\rangle.
\eeq
We finally show that \eqref{solorad} holds also for any $\psi \in \X_{\rm rad}$. 
Given $\psi \in \X_{\rm rad}$, it is easy to see that there exists $(\psi_n)_n\subset C^\infty_c(\RN)$, $\psi_n$ radially symmetric such that $\psi_n\to \psi$ in $\D$ and with $\|\n \psi_n\|_\infty\le C$. Indeed it is sufficient to consider $\psi_n =\zeta_n \ast (\chi_n \psi)$, where  $\zeta_n$ are smooth radially symmetric mollifiers with compact support, and $\chi_n=\chi(\cdot/n)$, where $\chi:\RN \to \R$ is a smooth radially symmetric function such that
\[
\chi(x)=\left\{
\begin{array}{ll}
1 & \hbox{if }|x|\le 1,
\\
0 & \hbox{if }|x|\ge 2.
\end{array}
\right.
\]
Then, by \eqref{solorad}, we have
\[
\irn \frac{\n \phi_\rho \cdot \n \psi_n}{\sqrt{1-|\n \phi_\rho|^2}} dx
= \langle \rho,\psi_n\rangle,
\]
for any $n\ge 1$, and since $\psi_n\to \psi$ in $\D$, with $\|\n \psi_n\|_\infty\le C$, we infer from Lemma \ref{le:cont} that \eqref{solorad} holds for any $\psi \in \X_{\rm rad}$. To conclude, it remains to show that \eqref{solorad} holds for any $\psi \in \X$. Since $\phi_\rho$ is radially symmetric, we can take $\psi=\phi_\rho$ in \eqref{solorad} and therefore Remark \ref{rem:nehary} allows to conclude.
\end{proof}

Assuming further hypotheses on $\rho$, we can prove that the solution is $C^{1}$.
\begin{theorem}
\label{thm:reg}
Assume that $\rho$ is a radially symmetric function such that $\rho\in L^s(\RN) \cap L^\s(B_\d(0))$, for some $s\ge 1$, $\s\ge N$ and $\d>0$. Then the weak solution $\phi_\rho$ of \eqref{eq:BI} is $C^1(\RN;\R)$.
\end{theorem}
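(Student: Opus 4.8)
The plan is to start from the radial ODE satisfied by $\phi_\rho$ in the weak sense and bootstrap regularity using the finiteness of $\irn |\n\phi_\rho|^2/\sqrt{1-|\n\phi_\rho|^2}\,dx$ established in Proposition~\ref{pr:sol-deb}. Writing $\phi_\rho$ as a function of $r=|x|$, the weak formulation \eqref{solorad} with radial test functions says that the function
\[
g(r):=\frac{\phi_\rho'(r)}{\sqrt{1-|\phi_\rho'(r)|^2}}\,r^{N-1}
\]
has a distributional derivative on $(0,\infty)$ given by (a constant times) $\rho(r)r^{N-1}$, i.e. $g$ is absolutely continuous on compact subsets of $(0,\infty)$ and $-g'(r)=\o_N^{-1}\rho(r)r^{N-1}$ there (in fact one first gets an integral identity $g(r_2)-g(r_1)=-\o_N^{-1}\int_{r_1}^{r_2}\rho(s)s^{N-1}\,ds$). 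From this one reads off, for $r>0$,
\[
\frac{\phi_\rho'(r)}{\sqrt{1-|\phi_\rho'(r)|^2}}
= -\frac{1}{\o_N r^{N-1}}\int_0^r \rho(s)\,\o_N s^{N-1}\,ds=:H(r),
\]
where the integral converges at $0$ because $\rho\in L^\s(B_\d(0))$ with $\s\ge N$ (Hölder bounds $\int_0^r|\rho(s)|s^{N-1}ds\le \|\rho\|_{L^\s(B_r)}\big(\int_0^r s^{(N-1)\s'}ds\big)^{1/\s'}$, finite since $(N-1)\s'>-1$). Inverting the strictly increasing bijection $t\mapsto t/\sqrt{1-t^2}$ from $(-1,1)$ onto $\R$ gives
\[
\phi_\rho'(r)=\frac{H(r)}{\sqrt{1+H(r)^2}},
\]
so $\phi_\rho'$ is determined explicitly by $H$.

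The continuity of $\phi_\rho'$ on $(0,\infty)$ is then immediate from continuity of $H$ there, since $r\mapsto \int_0^r\rho(s)s^{N-1}ds$ is continuous (indeed absolutely continuous) and $r^{-(N-1)}$ is continuous away from $0$; and because $t\mapsto t/\sqrt{1+t^2}$ is smooth and bounded, no cancellation issue arises even where $|H|$ is large. The only delicate point is continuity \emph{at the origin}: we must show $\phi_\rho'(r)\to \phi_\rho'(0)$ as $r\to 0^+$, with the natural candidate $\phi_\rho'(0)=0$ forced by radial symmetry and $C^1$-ness. For this I would estimate $|H(r)|\le r^{1-N}\int_0^r|\rho(s)|s^{N-1}ds$ and again apply Hölder with exponent $\s\ge N$:
\[
|H(r)|\le \|\rho\|_{L^\s(B_r(0))}\Big(\int_0^r s^{(N-1)\s'}\,ds\Big)^{1/\s'} r^{1-N}
= C\,\|\rho\|_{L^\s(B_r(0))}\, r^{\,N/\s'+1-N}.
\]
A short computation shows the exponent equals $1-N/\s\ge 0$, and it is $>0$ when $\s>N$, giving $H(r)\to 0$; in the borderline case $\s=N$ the exponent is $0$ but $\|\rho\|_{L^N(B_r(0))}\to 0$ as $r\to 0$ by absolute continuity of the integral, so again $H(r)\to0$. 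Hence $\phi_\rho'(r)\to 0=\phi_\rho'(0)$, and $\phi_\rho\in C^1(\R_+)$; translating back, $\phi_\rho\in C^1(\RN;\R)$ since a radial function that is $C^1$ in $r$ with vanishing derivative at $r=0$ is $C^1$ across the origin.

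The main obstacle is precisely this behaviour at $r=0$, where the weight $r^{1-N}$ blows up and the naive bound on $H$ need not be finite; the hypothesis $\s\ge N$ is exactly what is needed to beat the singularity (strict inequality giving a positive Hölder gain, equality being rescued by absolute continuity of the $L^N$ norm on shrinking balls). A secondary technical point is justifying that the weak radial identity \eqref{solorad} indeed yields the pointwise ODE for $g$ on $(0,\infty)$: this follows by a standard localization argument, choosing radial test functions supported in compact subsets of $(0,\infty)$ (where $|\phi_\rho'|<1$ is not even needed, only the a.e. statement), integrating by parts, and using the fundamental lemma of the calculus of variations to get absolute continuity of $g$ and its derivative; the global integrability \eqref{eq:L1} ensures all the integrals make sense. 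Once $\phi_\rho'$ is known to be continuous, no further bootstrapping is required for the claimed $C^1$ regularity, though under the smoothness of $\rho$ one could iterate to get higher regularity away from the origin. The role of the hypothesis $\rho\in L^s(\RN)$ for some $s\ge1$ is merely to guarantee $\rho\in\X^*$ (together with the local higher integrability), so that Theorem~\ref{thm:exsol} applies and $\phi_\rho$ is the weak solution to begin with.
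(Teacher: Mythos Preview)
Your argument follows essentially the same route as the paper's: derive from the radial weak formulation that $g(r):=\phi_\rho'(r)\,r^{N-1}/\sqrt{1-|\phi_\rho'(r)|^2}$ is locally absolutely continuous with derivative $-\rho(r)r^{N-1}$, integrate, and then use H\"older with exponent $\s\ge N$ to show $H(r)\to 0$ as $r\to 0^+$. The paper splits off the statement $\phi_\rho\in C^1(0,+\infty)$ as a separate proposition (Proposition~\ref{pr:C}), while you get it directly from the explicit formula $\phi_\rho'=H/\sqrt{1+H^2}$, which is equally clean.

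There is, however, one step that needs more care. From test functions supported in compact subsets of $(0,\infty)$ you only obtain the increment identity $g(r_2)-g(r_1)=-\int_{r_1}^{r_2}\rho(s)s^{N-1}\,ds$, hence $g(r)=c_0-\int_0^r\rho(s)s^{N-1}\,ds$ for some constant $c_0$ (the limit $g(0^+)$ exists by your H\"older bound on $\int_0^r|\rho|s^{N-1}ds$). You then write down the formula with $c_0=0$ without justification. Nothing you have invoked so far forces $c_0=0$: in particular, the integrability \eqref{eq:L1} is compatible with $c_0\ne 0$, since in that case $|\phi_\rho'|^2 r^{N-1}/\sqrt{1-|\phi_\rho'|^2}\sim |c_0|$ near $0$, which is integrable. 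The paper settles this by testing with the cone $\psi_R(x)=\max\{R-|x|,0\}$, which does not vanish at the origin, and using the Mean Value Theorem to recover $g(0^+)$. Equivalently, plugging $g=c_0-\int_0^r\rho\, s^{N-1}ds$ into \eqref{solorad} with any $\psi\in\X_{\rm rad}$ satisfying $\psi(0)\ne 0$ and integrating by parts produces a boundary term $-c_0\psi(0)$ that must vanish.

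A minor computational slip: in your H\"older display the factor $\big(\int_0^r s^{(N-1)\s'}ds\big)^{1/\s'}$ pairs with the \emph{unweighted} one-dimensional norm $\big(\int_0^r|\rho|^\s ds\big)^{1/\s}$, not with $\|\rho\|_{L^\s(B_r(0))}$, and yields exponent $1/\s'$ rather than the $N/\s'+1-N=1-N/\s$ you announce. The splitting that produces the $N$-dimensional norm and the correct exponent is $\int_0^r|\rho|s^{N-1}ds\le\big(\int_0^r|\rho|^\s s^{N-1}ds\big)^{1/\s}\big(\int_0^r s^{N-1}ds\big)^{1/\s'}$, which is exactly what the paper uses.
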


We start from the following proposition that basically states that $|\phi_\rho'(r)|=1$ can only happen at $r=0$.
\begin{proposition}\label{pr:C}
Assume that $\rho\in L^s(\RN)$ with $s\ge 1$ and $\rho$ is radially symmetric. Then, $\phi_\rho\in C^1(0,+\infty)$. Moreover, for every $0<r_0<R$ there exists $\eps>0$ such that $|\phi_\rho'(r)| \le 1-\eps$, 
for every $r\in [r_0,R]$.
\end{proposition}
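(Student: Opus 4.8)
The plan is to work from the integral form of the radial equation and bootstrap regularity. First I would exploit the result of Theorem~\ref{thm:exsol}: $\phi_\rho$ is the radial weak solution, so testing \eqref{solorad} against radial functions supported away from the origin and using the fundamental lemma of the calculus of variations, the vector field $W(r):=\frac{\phi_\rho'(r)}{\sqrt{1-|\phi_\rho'(r)|^2}}\,r^{N-1}$ satisfies, in the sense of distributions on $(0,+\infty)$,
\[
-\o_N W'(r)=\o_N\,\rho(r)\,r^{N-1}\quad\text{i.e.}\quad W(r)=-\int_0^r\rho(\s)\s^{N-1}\,d\s=:-\Phi(r).
\]
Since $\rho\in L^s(\RN)$ with $s\ge 1$, the function $\Phi$ is well defined, continuous, and in fact $\Phi(r)=O(r^{N})$ near $0$ and bounded on $[r_0,R]$; more precisely $|\Phi(r)|\le C r^{N/s'}$ on bounded intervals by Hölder, where $s'$ is the conjugate exponent. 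Inverting the strictly increasing map $t\mapsto t/\sqrt{1-t^2}$ gives, for a.e.\ $r>0$,
\begin{equation}\label{eq:formuladeriv}
\phi_\rho'(r)=\frac{-\Phi(r)/r^{N-1}}{\sqrt{1+\Phi(r)^2/r^{2(N-1)}}}=\frac{-\Phi(r)}{\sqrt{r^{2(N-1)}+\Phi(r)^2}}.
\end{equation}

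Next I would read off the regularity. On any interval $[r_0,R]\subset(0,+\infty)$ the denominator in \eqref{eq:formuladeriv} is bounded below by $r_0^{N-1}>0$, and $\Phi$ is continuous there (it is an absolutely continuous function of $r$, being the integral of an $L^1_{\rm loc}$ function once we note $\rho\s^{N-1}\in L^1_{\rm loc}(0,\infty)$). Hence the right-hand side of \eqref{eq:formuladeriv} is a continuous function of $r$ on $[r_0,R]$, so after modification on a null set $\phi_\rho'$ is continuous there; since $[r_0,R]$ is arbitrary, $\phi_\rho\in C^1(0,+\infty)$. For the quantitative bound $|\phi_\rho'(r)|\le 1-\eps$ on $[r_0,R]$, observe from \eqref{eq:formuladeriv} that
\[
1-|\phi_\rho'(r)|^2=\frac{r^{2(N-1)}}{r^{2(N-1)}+\Phi(r)^2}\ge\frac{r_0^{2(N-1)}}{r_0^{2(N-1)}+\|\Phi\|_{L^\infty([r_0,R])}^2}=:\eps'>0,
\]
so $|\phi_\rho'(r)|\le\sqrt{1-\eps'}\le 1-\eps$ for a suitable $\eps>0$; here I use that $\Phi$ is bounded on $[r_0,R]$, which follows from $\rho\in L^s$ and Hölder.

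The step I expect to require the most care is the passage from the weak formulation \eqref{solorad} — valid with test functions in $\X_{\rm rad}$, hence a priori only after integrating against $r^{N-1}\,dr$ — to the pointwise identity \eqref{eq:formuladeriv} holding for a.e.\ $r$. One must justify that a radial $\psi\in C^\infty_c(\RN)$ with $\supp\psi\subset\{0<r_0'\le|x|\le R'\}$ is admissible (it is, by Theorem~\ref{thm:exsol}), extract $W'=-\rho r^{N-1}$ in $\mathcal D'(0,\infty)$, and then invoke that $W$ thus has an absolutely continuous representative to write $W(r)=-\Phi(r)+{\rm const}$; the constant is forced to be $0$ because $W(r)=\phi_\rho'(r)(1-|\phi_\rho'(r)|^2)^{-1/2}r^{N-1}\to 0$ as $r\to 0^+$ (since $|\phi_\rho'|\le 1$ bounds $W$ by $r^{N-1}/\sqrt{1-|\phi_\rho'|^2}$, but one needs a little argument — e.g.\ testing near the origin and using that $\Phi(r)\to 0$, together with monotonicity of $t\mapsto t/\sqrt{1-t^2}$, to trap the boundary term). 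This normalization at $r=0$ is the genuinely delicate point; everything after it is the elementary inversion and continuity argument sketched above.
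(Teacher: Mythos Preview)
Your approach is correct and essentially the same as the paper's: both use the radial weak formulation to see that $W(r)=\dfrac{\phi_\rho'(r)\,r^{N-1}}{\sqrt{1-|\phi_\rho'(r)|^2}}$ has distributional derivative $-\rho(r)r^{N-1}\in L^1_{\rm loc}(0,\infty)$, hence admits a continuous representative, and then invert to bound $|\phi_\rho'|$ away from $1$ on $[r_0,R]$.

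Two remarks on where you diverge from the paper. First, the normalization ``constant $=0$'' that you flag as the delicate point is \emph{not needed} for this proposition. If $W(r)=-\Phi(r)+c$ with $c$ unknown, your inversion still gives
\[
\phi_\rho'(r)=\frac{-\Phi(r)+c}{\sqrt{r^{2(N-1)}+(-\Phi(r)+c)^2}},
\]
which is continuous on $(0,\infty)$ and satisfies $1-|\phi_\rho'(r)|^2\ge r_0^{2(N-1)}\big/\big(r_0^{2(N-1)}+\|{-}\Phi+c\|_{L^\infty([r_0,R])}^2\big)$. The determination $c=0$ is precisely what the paper establishes in the \emph{next} result (Theorem~\ref{thm:reg}), using the test function $\psi_R(x)=R-|x|$ and the mean value theorem; you are anticipating that argument here unnecessarily. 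Second, to make the passage ``$W'\in L^1_{\rm loc}\Rightarrow W$ has an absolutely continuous representative'' rigorous you need $W\in L^1_{\rm loc}$ to begin with. The paper secures this (and in fact $W\in L^1(0,R)$) by invoking \eqref{eq:L1}, i.e.\ $\dfrac{|\nabla\phi_\rho|^2}{\sqrt{1-|\nabla\phi_\rho|^2}}\in L^1(\RN)$ from Proposition~\ref{pr:sol-deb}, and splitting on $\{|\phi_\rho'|\le 1/2\}$ and its complement. With that in hand one has $W\in W^{1,1}(0,R)\hookrightarrow C([0,R])$, hence $|W(r)|\le C(R)$ on $(0,R]$, and the bound $|\phi_\rho'(r)|\le 1-\eps$ on $[r_0,R]$ follows immediately from $\dfrac{|\phi_\rho'(r)|}{\sqrt{1-|\phi_\rho'(r)|^2}}\le C(R)/r^{N-1}$ --- which is exactly the paper's route, and is slightly cleaner than carrying the explicit formula.
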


\begin{proof}
From the weak formulation written in radial coordinates, we have
\begin{equation}\label{eqrad}
\int_{0}^{+\infty}\frac{\phi_\rho'(r) r^{N-1}}{\sqrt{1-|\phi_\rho'(r)|^2}}\psi'(r)\, dr
=\int_{0}^{+\infty} r^{N-1}\rho(r)\psi(r)\, dr, 
\end{equation} 
for every $\psi\in\X_{rad}$.
We claim that for any $R>0$, the function 
\beq\label{COR}
h_{R}(r):=\frac{\phi_\rho'(r) r^{N-1}}{\sqrt{1-|\phi_\rho'(r)|^2}}
\eeq
is continuous on $[0,R]$. From this claim we get the regularity of $\phi_\rho$ and we deduce that 
for every $R>0$, there exists a constant $C=C(R)>0$ such that, for every $r\in (0,R]$
\[
\frac{|\phi_\rho'(r)| }{\sqrt{1-|\phi_\rho'(r)|^2}} \le \frac{C(R)}{r^{N-1}}.
\]
Consequently, for every $r_0\in (0,R]$, there exists $\eps>0$ such that
\[
|\phi_\rho'(r)| \le 1-\eps,
\]
for every $r\in [r_0,R]$ .

\medbreak

\noindent To prove our claim, observe that as $\rho\in L^s(\RN)$ for some $s\ge 1$, we deduce from \eqref{eqrad}, that
\[
\left(\frac{\phi_\rho'(r) r^{N-1}}{\sqrt{1-|\phi_\rho'(r)|^2}}\right)'\in L^1(0,R),
\]
for any $R>0$.
Moreover, by \eqref{eq:L1}, we have
\begin{align*}
\int_0^R \frac{|\phi_\rho'(r)| r^{N-1}}{\sqrt{1-|\phi_\rho'(r)|^2}}\,  dr
&=\int_{[0,R]\cap \{|\phi_\rho'(r)|\le \frac 12\}} \frac{|\phi_\rho'(r)| r^{N-1}}{\sqrt{1-|\phi_\rho'(r)|^2}}\, dr
+\int_{[0,R]\cap \{|\phi_\rho'(r)|> \frac 12\}} \frac{|\phi_\rho'(r)| r^{N-1}}{\sqrt{1-|\phi_\rho'(r)|^2}}\, dr
\\
&\le  C\left(\int_0^R r^{N-1} dr + \int_0^R \frac{|\phi_\rho'(r)|^2 r^{N-1}}{\sqrt{1-|\phi_\rho'(r)|^2}} \, dr\right) <+\infty.
\end{align*}
It follows that $h_{R}\in W^{1,1}(0,R)$ and the claim is therefore a consequence of the continuous embedding of $W^{1,1}(0,R)$ in $C([0,R])$.
\end{proof}

Assuming moreover that $\rho \in L^\s(B_\d(0))$ for some $\s\ge N$ and $\d>0$, we can improve the regularity up to the origin.

\begin{proof}[Proof of Theorem \ref{thm:reg}]
Let $R>0$ and $\psi_R:\RN\to \R$ be defined as
\[
\psi_R(x)=
\begin{cases}
R-|x| &|x|\le R,
\\
0 & |x|>R.
\end{cases}
\]
By \eqref{eq:weakBI}, we get
\beq\label{eq:cacca}
-\frac{1}{R} \int_0^R  \frac{\phi_\rho'(r) r^{N-1}}{\sqrt{1-|\phi_\rho'(r)|^2}}\, dr
=\int_0^R \rho(r) r^{N-1}\, dr - \frac{1}{R} \int_0^R \rho(r) r^{N}\, dr.
\eeq
By the Mean Value Theorem (remember that $h_{R}$ defined in \eqref{COR} is continuous), we have
\[
\lim_{R\to 0^+}-\frac{1}{R} \int_0^R  \frac{\phi_\rho'(r) r^{N-1}}{\sqrt{1-|\phi_\rho'(r)|^2}}\, dr
=-\lim_{R\to 0^+} \frac{\phi_\rho'(R) R^{N-1}}{\sqrt{1-|\phi_\rho'(R)|^2}} =c_0\in\R.
\]
Moreover, if $R$ tends to $0$, the right hand side of \eqref{eq:cacca} tends to $0$ by the absolute continuity of the integral.
Therefore $c_0=0$.\\
The equation \eqref{eqrad} means that the weak derivative of $h_{R}(r)$ is equal to $-\rho(r) r^{N-1}$. Since $\rho$ is $L^{1}_{\rm loc}$, we can integrate $h_{R}'$ on $[0,R]$ to get 

\[
\frac{\phi_\rho'(R) R^{N-1}}{\sqrt{1-|\phi_\rho'(R)|^2}}=-\int_0^R \rho(r) r^{N-1} \, dr,
\]
since $c_{0}=0$.
Then, if $R<\delta$,
\[
\frac{ |\phi_\rho'(R)|}{\sqrt{1-|\phi_\rho'(R)|^2}}\le \frac{1}{R^{N-1}}\int_0^R |\rho(r)| r^{N-1} \, dr
\le C R^{(\s-N)/\s} \|\rho\|_{L^\s(B_R(0))}. 
\]
Since $\s\ge N$, we deduce again from the absolute continuity of the integral that $\phi_\rho'(R)\to 0$, as $R\to 0^+$. Hence $\phi_\rho\in C^1([0,\infty))$ with $\phi_\rho'(0)=0$. We conclude therefore that $\phi_\rho\in C^1(\RN;\R)$.
\end{proof}

\subsection{The case of a bounded charge}\label{se:cbg}

Here, keeping in mind Definition \ref{def:spacelike}, we prove Theorem \ref{th:limitato}, whose assumption is $\rho\in L^\infty_{\rm loc}(\mathbb{R}^N)\cap\X^*$.
Let $\Omega$ be an arbitrary bounded domain with smooth boundary in $\mathbb{R}^N$. We set
\[
C_{\phi_\rho}^{0,1}(\Omega)=
\left\{\phi\in C^{0,1}(\Omega) \mid \phi|_{\partial \Omega}=\phi_\rho|_{\partial \Omega}, |\nabla \phi| \le 1\right\}, \]
\begin{equation}
\label{eq:setK}
K=\left\{\overline{xy}\subset\Omega\mid x,y\in\partial\Omega, x\neq y, |\phi_\rho(x)-\phi_\rho(y)|=|x-y|\right\},
\end{equation}
and define $I_\Omega:C_{\phi_\rho}^{0,1}(\Omega)\to\mathbb{R}$ by
\[
I_\Omega(\phi)=\int_{\Omega} \Big(1-\sqrt{1-|\nabla \phi|^2}\Big) \ dx- \int_{\Omega} \rho \phi \ dx .
\]

\begin{proof}[Proof of Theorem \ref{th:limitato}]
Using the above  notations, for any fixed $\O$,  it is easy to see that $\phi_\rho|_{\Omega}$ is a minimizer for $I_\Omega$ in $C_{\phi_\rho}^{0,1}(\Omega)$. 
By \cite[Corollary 4.2]{BS}, we have that $\phi_\rho$ is strictly spacelike in $\Omega\setminus K$ and $Q^-(\phi_\rho)=\rho$ in $\Omega\setminus K$,
where $Q^-$ is defined in \eqref{Q-}. Furthermore,
\[
\phi_\rho(tx+(1-t)y)=t\phi_\rho(x)+(1-t)\phi_\rho(y),
\qquad
0<t<1
\]
for every $x,y\in\partial\Omega$ such that $|\phi_\rho(x)-\phi_\rho(y)|=|x-y|$ and $\overline{xy}\subset\Omega$.
If $K=\emptyset$, then $\phi_\rho$ is strictly spacelike in $\Omega$.
\\
Assume by contradiction that $K\neq\emptyset$. Then there exist $x,y\in\partial\Omega$ such that $x\neq y$, $\overline{xy}\subset\Omega$ and $|\phi_\rho(x)-\phi_\rho(y)|=|x-y|$.
Without loss of generality we can assume that $\phi_\rho(x)>\phi_\rho(y)$. It is easy to see that for all $t\in (0,1)$
\begin{equation}\label{eq:ray}
\phi_\rho (tx +(1-t)y)=\phi_\rho(y)+t|x-y|.
\end{equation}
Since, for any $R>0$ such that $\Omega\subset B_R$, ${\phi_\rho}|_{B_R} $ is a minimizer of $I_{B_R}$ in $C_{\phi_\rho}^{0,1}(B_R)$, then, by \cite[Theorem 3.2]{BS}, we have that \eqref{eq:ray} holds for all $t\in\mathbb{R}$ such that $tx +(1-t)y\in B_R$. Now we reach a contradiction with the boundedness of $\phi_\rho$, for an $R$ sufficiently large.
\end{proof}

\begin{remark}
As observed in \cite[Remark p. 147]{BS}, if $\rho\in C^k(\RN)$, then $\phi_\rho\in C^{k+1}(\RN)$.
\end{remark}

\section{The electric potential produced by a distribution of $k$ point charges}\label{se:delte}

This section is devoted to the proof of Theorem \ref{th:deltafinale}. In all this section we set 
$$\rho=\sum_{i=1}^k a_i \d_{x_i},$$ 
where $a_i\in \R$ and $x_i\in \RN$, for $i=1,\ldots,k$, $k\in\N_{0}$. We consider the problem
\begin{equation}\label{eq:k}
	\left\{
	\begin{array}{ll}
	-\dv \left( \dfrac{\n \phi}{\sqrt{1-|\n \phi|^2}}\right)=\displaystyle\sum_{i=1}^k a_i \d_{x_i}, & \hbox{in }\RN,
	\\
	\phi(x)\to 0, &\hbox{as }x\to \infty.
	\end{array}
	\right.
 \end{equation}

\medbreak

In the recent contribution \cite{K}, the author claims the existence \cite[Proposition 2.1]{K} of a weak solution $v_{\infty}$ of \eqref{eq:k} under the assumption that $x_{i}\in\RT$ and $a_{i}\in\R$ for $i=1,\ldots,k$. However, the proof of \cite[Step 2.6, page 515]{K} is incomplete because even if the Lebesgue measure of the set $\{x\in\R^{3}\mid |\nabla v_{\infty}(x)|=1\}$ is zero, yet one cannot compute the variation $\mathcal F^{(1)}[v_{\infty}](\psi)$ for all test functions $\psi$. Therefore, \cite[(22) page 516]{K} does only vanish for a restricted set of test functions which is not necessarily a dense set of $C_c^\infty (\RN)$. This means that one cannot conclude that $v_{\infty}$ weakly solves \eqref{eq:k}.

\medbreak

The existence of a unique minimizer of the associated energy functional
$$
I(\phi)=\irn \Big(1 - \sqrt{1-|\nabla \phi|^2}\Big) dx
- \sum_{i=1}^{k}a_{i}\phi(x_{i}),
$$
is given in \cite{K} or 
follows  from Theorem \ref{thm:relaxedBI}. As before, we denote this minimizer by $\phi_\rho$. Our concern in this section consists in proving that this minimizer solves \eqref{eq:k} in a weak or a strong sense. We can prove this fact in some particular cases only. 

\medbreak

We need some intermediate steps. First we prove that $\phi_\rho$ satisfies strongly \eqref{eq:k} in $\RN\setminus\G$, where 
$$ \G=\bigcup_{ i\neq j}\overline{x_i x_j}$$ 
and this is true whithout any restriction on the coefficients $a_{i}$ and the location of the charges. This argument is already included in \cite{K}. We give it for completeness. 

\begin{lemma}\label{lem:delta2}
The minimum $\phi_\rho$ of $I$ satisfies strongly
\begin{equation*}
\left\{
\begin{array}{ll}
-\dv \left( \dfrac{\n \phi}{\sqrt{1-|\n \phi|^2}}\right)=0, & \hbox{in }\RN\setminus\G,
\\
\phi(x)\to 0, &\hbox{as }x\to \infty.
\end{array}
\right.
\end{equation*}
Furthermore, we have that 
\begin{enumerate}[label=(\roman*),ref=\roman*]
\item \label{it:411}$\phi_\rho\in C^\infty (\RN\setminus\G)\cap C(\RN)$;
\item \label{it:412}$\phi_\rho$ is strictly spacelike on $\RN\setminus\G$;
\item \label{it:413}for $i\ne j$, either $\phi_{\rho}$ is a classical solution on $\overline{x_i x_j}$, or 
\[
\phi_\rho(tx_{i}+(1-t)x_{j})=t\phi_\rho(x_{i})+(1-t)\phi_\rho(x_{j}),
\qquad
0<t<1.
\]
\end{enumerate}
\end{lemma}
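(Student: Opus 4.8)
The plan is to localise away from the charges and invoke the Bartnik--Simon regularity theory exactly as in the proof of Theorem~\ref{th:limitato}, the only new feature being that the ``light segments'' produced by that theory are now confined to $\G$ instead of being absent. First I would fix a bounded smooth domain $\Omega$ whose closure contains no $x_i$. For any $\phi\in C^{0,1}(\Omega)$ with $\phi|_{\partial\Omega}=\phi_\rho|_{\partial\Omega}$ and $|\n \phi|\le 1$, gluing $\phi$ on $\overline\Omega$ with $\phi_\rho$ on $\RN\setminus\Omega$ produces a competitor $\tilde\phi\in\X$ that coincides with $\phi_\rho$ near every charge, so $\langle\rho,\tilde\phi\rangle=\langle\rho,\phi_\rho\rangle$; since $\phi_\rho$ minimises $I$ on $\X$, this forces $\int_\Omega(1-\sqrt{1-|\n \phi|^2})\,dx\ge\int_\Omega(1-\sqrt{1-|\n \phi_\rho|^2})\,dx$, i.e.\ $\phi_\rho|_\Omega$ minimises $\phi\mapsto\int_\Omega(1-\sqrt{1-|\n \phi|^2})\,dx$ among such $\phi$ (the source being zero on $\Omega$). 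Then \cite[Corollary~4.2]{BS} gives that $\phi_\rho$ is strictly spacelike on $\Omega\setminus K_\Omega$ with $Q^-(\phi_\rho)=0$ classically there, and that $\phi_\rho$ is affine on each segment of $K_\Omega:=\{\overline{xy}\subset\Omega\mid x,y\in\partial\Omega,\ x\neq y,\ |\phi_\rho(x)-\phi_\rho(y)|=|x-y|\}$. Uniform ellipticity of the linearised operator where $|\n \phi_\rho|<1$, together with the bootstrap remark in \cite{BS} ($\rho\in C^k\Rightarrow\phi_\rho\in C^{k+1}$, here $\rho\equiv 0$ on $\Omega$), upgrades this to $\phi_\rho\in C^\infty(\Omega\setminus K_\Omega)$; and $\phi_\rho\in C(\RN)$ together with the decay at infinity are immediate from $\phi_\rho\in\X$ and Lemma~\ref{lemma21}.

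The heart of the matter is to show $K_\Omega\subseteq\G$. Let $\overline{xy}\in K_\Omega$, say with $\phi_\rho(x)>\phi_\rho(y)$, so $\phi_\rho$ is affine of slope $1$ along $\overline{xy}$. Choosing $R$ with $\Omega\subset B_R$, the same gluing argument (all charges lie in $B_R$, and the point-charge contributions are untouched by modifications outside $B_R$) shows $\phi_\rho|_{B_R}$ minimises $\phi\mapsto\int_{B_R}(1-\sqrt{1-|\n \phi|^2})\,dx-\langle\rho,\phi\rangle$ in $C^{0,1}_{\phi_\rho}(B_R)$, so \cite[Theorem~3.2]{BS} lets the affine relation propagate along the line $\ell\supset\overline{xy}$ inside $B_R$. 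This propagation can only be interrupted by a point charge: if $\ell$ avoided every $x_i$, then $\phi_\rho$ would be affine of slope $1$ on $\ell\cap B_R$ for every large $R$, hence on all of $\ell$, contradicting $\phi_\rho(z)\to 0$ as $|z|\to\infty$. Hence $\ell$ meets some $x_i$; running the argument on the other side of $\overline{xy}$ shows $\ell$ must meet a second charge $x_j$ as well (otherwise the ray escapes to infinity along $\ell$, again contradicting the decay), and $i\neq j$ because a line passes through a given point only once. Therefore $\overline{xy}\subset\overline{x_ix_j}\subset\G$. Consequently $\Omega\setminus K_\Omega\supseteq\Omega\setminus\G$; since every point of $\RN\setminus\G$ has a ball neighbourhood whose closure avoids the charges, applying the previous step on such balls yields $\phi_\rho\in C^\infty(\RN\setminus\G)$, strictly spacelike on $\RN\setminus\G$, with $-\dv(\n \phi_\rho/\sqrt{1-|\n \phi_\rho|^2})=0$ classically on $\RN\setminus\G$. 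Together with $\phi_\rho\in C(\RN)$ and the behaviour at infinity this establishes the displayed boundary value problem and items~\eqref{it:411} and \eqref{it:412}.

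For item~\eqref{it:413}, fix $i\neq j$. If $\phi_\rho$ is strictly spacelike at every point of $\overline{x_ix_j}\setminus\{x_i,x_j\}$, then by the above it solves $Q^-(\phi_\rho)=0$ classically in a neighbourhood of $\overline{x_ix_j}\setminus\{x_i,x_j\}$, which is the first alternative. Otherwise there is $p\in\overline{x_ix_j}\setminus\{x_1,\dots,x_k\}$ at which $\phi_\rho$ fails to be strictly spacelike; applying the first paragraph to a small ball about $p$ disjoint from the charges shows that $p$ lies on a light segment, which by the previous paragraph is a subsegment of a line joining two charges, so that $\phi_\rho$ is affine of slope $1$ on a portion of $\overline{x_ix_j}$ through $p$. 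Propagating once more with \cite[Theorem~3.2]{BS} in large balls --- the affine relation being interrupted only at the charges $x_i$ and $x_j$ --- gives $\phi_\rho(tx_i+(1-t)x_j)=t\phi_\rho(x_i)+(1-t)\phi_\rho(x_j)$ for $0<t<1$, the second alternative.

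I expect the main obstacle to be the control of $K_\Omega$ in the second paragraph: one has to combine the Bartnik--Simon propagation theorem for light rays --- stated there for regular sources --- with the boundary condition at infinity, so as to exclude light rays running off to infinity and to pin down the point charges as the only admissible endpoints of such rays. This is also precisely the mechanism that makes the geometric book-keeping along $\G$ (collinearities and intersections of the segments $\overline{x_ix_j}$) work in item~\eqref{it:413}.
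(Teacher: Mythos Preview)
Your overall strategy is the paper's strategy: localise in a bounded smooth domain $\Omega$ with $\overline\Omega\subset\RN\setminus\{x_1,\dots,x_k\}$, observe that $\phi_\rho|_\Omega$ minimises the area-type functional with zero source there, invoke \cite[Corollary~4.2]{BS} to get strict spacelikeness and $Q^-(\phi_\rho)=0$ off the light-segment set $K_\Omega$, and then use light-ray propagation together with boundedness of $\phi_\rho$ to force $K_\Omega\subset\Gamma$. The smoothness and item~\eqref{it:413} are then read off exactly as you describe.

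The one genuine gap is in your propagation step. You pass to a ball $B_R$ containing all the charges, note that $\phi_\rho|_{B_R}$ minimises $\phi\mapsto\int_{B_R}(1-\sqrt{1-|\nabla\phi|^2})\,dx-\langle\rho,\phi\rangle$ over $C^{0,1}_{\phi_\rho}(B_R)$, and then invoke \cite[Theorem~3.2]{BS} on $B_R$. But that theorem is proved for bounded (mean-curvature) data, and here $\rho$ is a superposition of Diracs on $B_R$; you flag this yourself (``stated there for regular sources'') without resolving it. The paper sidesteps the issue completely: rather than enlarging to balls that swallow the charges, it argues by contradiction that if the line $\ell\supset\overline{xy}$ meets $\Gamma$ in only finitely many points, then $\ell$ can pass through at most one $x_i$ (two would force $\overline{x_ix_j}\subset\ell\cap\Gamma$), so one can stretch $\Omega$ arbitrarily far along $\ell$ in at least one direction to obtain domains $\Omega'\subset\RN\setminus\{x_1,\dots,x_k\}$ with $\Omega\subset\Omega'$. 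On each such $\Omega'$ the source is identically zero, so \cite[Theorem~3.2]{BS} applies as stated and the affine relation extends across $\Omega'$; letting $\Omega'$ grow contradicts $\phi_\rho\in L^\infty(\RN)$. Thus $\ell$ must contain some $\overline{x_ix_j}$, and stretching up to the nearest charges on either side of $\overline{xy}$ gives $\overline{xy}\subset\Gamma$. Your heuristic ``propagation can only be interrupted by a point charge'' is exactly right, but the rigorous implementation is to always propagate inside domains that avoid the charges, so that the Bartnik--Simon theorem is used only where its hypotheses hold.
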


\begin{proof}
Let $\Omega$ be an arbitrary bounded open domain with smooth boundary in $\RN\setminus\{x_1,\ldots,x_k\}$.
Here we repeat the same arguments of Subsection \ref{se:cbg}, using the same notations. The main difference now is that $\rho=0$, since $\O\subset\RN\setminus\{x_1,\ldots,x_k\}$.
As in Subsection \ref{se:cbg}, we infer that $\phi_\rho$ is strictly spacelike and $Q^-(\phi_\rho)=0$ in $\Omega\setminus K$ where $K$ is defined in \eqref{eq:setK}. Furthermore, we have
\[
\phi_\rho(t x+(1-t)y)=t\phi_\rho(x)+(1-t)\phi_\rho(y),
\]
for every $0<t<1$, where $x,y\in\partial\Omega$ are such that $|\phi_\rho(x)-\phi_\rho(y)|=|x-y|$ and $\overline{xy}\subset\Omega$. Again, if $K=\emptyset$, then $\phi_\rho$ is strictly spacelike. \\
We now show that $K$ contains at most $\Gamma$.
Assume by contradiction there exist $x,y\in\partial\Omega$ such that $x\neq y$, $\overline{xy}\subset\Omega$ and $|\phi_\rho(x)-\phi_\rho(y)|=|x-y|$ and such that the straight line spanned by $\overline{xy}$ intersects $\G$ at a finite number of points (possibly zero). Without loss of generality, we can assume that $\phi_\rho(x)>\phi_\rho(y)$. It is easy to see, again, that for all $t\in (0,1)$
\begin{equation}\label{eq:ray2}
\phi_\rho (tx +(1-t)y)=\phi_\rho(y)+t|x-y|.
\end{equation}
Observe also that, since the line spanned by $x$ and $y$ intersects $\G$ at a finite number of points only, we can arbitrarily stretch $\Omega$ in at least one direction of $\overline{xy}$ to build new open sets $\Omega'\subset\RN\setminus\{x_1,\ldots,x_k\}$ with smooth boundaries and such that $\O\subset \O'$. Observe that ${\phi_\rho}|_{\O'} $ is a minimizer for $I_{\O'}$ in $C_{\phi_\rho}^{0,1}(\O')$ 
and, by \cite[Theorem~3.2]{BS}, we have that \eqref{eq:ray2} holds for all $t\in\mathbb{R}$ such that $tx +(1-t)y\in \O'$. 
Now, we reach a contradiction with the boundedness of $\phi_\rho$ by choosing $\O'$ long enough in the direction $\overline{xy}$.\\
Arguing in a similar way, we see that on each edge of $\Gamma$, either $Q^-(\phi_\rho)=0$ or the full edge belongs to $K$, namely (\ref{it:413}) holds.\\
Assertion (\ref{it:411}) follows from \cite[Remark p. 147]{BS}, assertion (\ref{it:412}) from \cite[Corollary 4.2]{BS}. 
\end{proof}

The next Lemma shows somehow the continuity of the minimizer with respect to the coefficients $a_{i}$, $i=1,\ldots,k$. As a consequence, when the coefficients are small, the minimizer $\phi_{\rho}$ is smooth on $\Gamma$.

\begin{lemma}\label{le:delta1}
For any $\eps>0$ there exists $\s>0$ such that, if  $\max_{ i=1,\ldots,k}|a_i|<\s$, then $\|\phi_\rho\|_\infty<\eps$.
\end{lemma}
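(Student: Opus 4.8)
The statement to prove is Lemma \ref{le:delta1}: for any $\eps>0$ there exists $\s>0$ such that $\max_i |a_i| < \s$ implies $\|\phi_\rho\|_\infty < \eps$. The natural approach is to combine the comparison principle (Lemma \ref{monotonia}) with an explicit radial barrier, then invoke the continuous embedding of $\X$ into $L^\infty(\RN)$ (Lemma \ref{lemma21}(\ref{it:embLinf})). Concretely, since the dependence on $\rho\mapsto\phi_\rho$ is monotone in the sense of Definition \ref{def:rhopos}, I would dominate $\rho = \sum_i a_i \d_{x_i}$ from above and below by suitable radial superpositions of point masses of fixed locations, whose potentials can be controlled, and then sandwich $\phi_\rho$.

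First I would reduce to a single point charge by additivity/monotonicity considerations. Write $\s = \max_i |a_i|$ and note $-\s\sum_i \d_{x_i} \le \rho \le \s\sum_i \d_{x_i}$ in the order of Definition \ref{def:rhopos}. By Lemma \ref{monotonia} it suffices to bound $\|\phi_{\pm\s\sum_i \d_{x_i}}\|_\infty$. For a single charge $\s\d_{x_0}$, the minimizer $\phi_{\s\d_{x_0}}$ is radial about $x_0$ and, by the estimates in Proposition \ref{pr:C}/Theorem \ref{thm:reg} applied to this explicit density (or by the classical BIon computation referenced via \cite{P}), one has a pointwise bound $|\phi_{\s\d_{x_0}}(x)| \le \s\, g(|x-x_0|)$ for a fixed decreasing function $g$ with $g(0)<\infty$ (indeed $g(r) \sim C r^{-(N-2)}$ at infinity and $g$ bounded near $0$), uniformly in $\s$ bounded. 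Alternatively, and perhaps more cleanly within the paper's framework: since $I$ is minimized at $\phi_\rho$ and $I(\phi_\rho)\le I(0)=0$, the inequality $\tfrac12\|\n\phi_\rho\|_2^2 \le \langle\rho,\phi_\rho\rangle \le \s\sum_i|\phi_\rho(x_i)| \le \s k \|\phi_\rho\|_\infty \le C\s k \|\n\phi_\rho\|_2$ (using the embedding) gives $\|\n\phi_\rho\|_2 \le 2Ck\s$, hence $\|\phi_\rho\|_\infty \le C'\s$ by the embedding again, which is exactly the claim with $\s$ chosen so that $C'\s<\eps$.

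The second route above is in fact the shortest: it uses only Lemma \ref{blabla}, Proposition \ref{pr:min}, and the embedding constant from Lemma \ref{lemma21}(\ref{it:embLinf}), and never needs the comparison principle or an explicit barrier. So the proof I would write is: pick $\phi\in\X$ test; from $I(\phi_\rho)\le 0$ and $\|\rho\|_{\X^*}\le \s\sum_i \|\d_{x_i}\|_{\X^*} = \s C_k$ (each $\d_{x_i}\in\X^*$ with norm bounded by the $L^\infty$-embedding constant, since $\langle\d_{x_i},\psi\rangle = \psi(x_i) \le \|\psi\|_\infty \le C\|\psi\|_\X$), deduce $\tfrac12\|\phi_\rho\|_\X^2 \le \|\rho\|_{\X^*}\|\phi_\rho\|_\X$, hence $\|\phi_\rho\|_\X \le 2C_k\s$, and finally $\|\phi_\rho\|_\infty \le C\|\phi_\rho\|_\X \le 2CC_k\s$; choosing $\s < \eps/(2CC_k)$ finishes.

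**Main obstacle.** There is essentially no serious obstacle — the only thing to be careful about is verifying that $\d_{x_i}\in\X^*$ with a norm controlled independently of the data, which is immediate from $\X \hookrightarrow L^\infty(\RN)$ (assertion (\ref{it:embLinf}) of Lemma \ref{lemma21}); the rest is the standard coercivity estimate already appearing in the proof of Lemma \ref{compactset}. The mild subtlety, if one prefers the barrier/comparison route instead, would be producing a uniform-in-$\s$ pointwise bound for the single-charge potential near the singularity, but since we only need an $L^\infty$ bound and the single-charge minimizer is itself bounded (again by the embedding applied to $\s\d_{x_0}\in\X^*$), even that reduces to the same coercivity argument. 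Thus I would present the short energy-estimate proof and note, if desired, that the monotonicity route gives the same conclusion.
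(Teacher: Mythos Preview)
Your second route---the direct energy estimate---is correct and more elementary than the paper's argument. The paper proceeds by contradiction: assuming a sequence $\rho_n=\sum_i a_i^n\d_{x_i}$ with $\max_i|a_i^n|<1/n$ but $\|\phi_{\rho_n}\|_\infty>c$, it invokes the convergence result Theorem~\ref{th:convergenza} (with the uniform domination $-\sum_i\d_{x_i}\le\rho_n\le\sum_i\d_{x_i}$) to force $\phi_{\rho_n}\to 0$ uniformly in $\RN$. That theorem in turn rests on the comparison principle Lemma~\ref{monotonia}, so the paper's proof is closer in spirit to the barrier route you sketched first, just packaged abstractly. Your coercivity argument bypasses all of this: it is essentially the computation behind Lemma~\ref{le:delta2}, with the added observation that the constant $C(a_1,\ldots,a_k)$ appearing there is nothing but $k\max_i|a_i|$. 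What you gain is an explicit quantitative bound $\|\phi_\rho\|_\infty\le C(k)\,\s^\alpha$, which the paper's contradiction argument does not provide; what the paper's route buys is that the same Theorem~\ref{th:convergenza} is reused elsewhere (Corollary~\ref{corollario}), so the indirect proof comes essentially for free once that machinery is in place.

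One small technical caveat: the embedding $\X\hookrightarrow L^\infty$ does not furnish a \emph{linear} inequality $\|\phi\|_\infty\le C\|\n\phi\|_2$ on $\X$ (take $\phi_R=(R-|x|)_+$ and let $R\to 0^+$: then $\|\phi_R\|_\infty=R$ while $\|\n\phi_R\|_2\sim R^{N/2}$). What one actually obtains, interpolating $\|\n\phi\|_\infty\le 1$ with $\|\n\phi\|_2$ and combining with Sobolev/Morrey, is $\|\phi\|_\infty\le C\|\n\phi\|_2^\gamma$ for some $\gamma\in(0,1)$. This is harmless for your purpose: from $\tfrac12\|\n\phi_\rho\|_2^2\le k\s\,\|\phi_\rho\|_\infty\le Ck\s\,\|\n\phi_\rho\|_2^\gamma$ one still gets $\|\n\phi_\rho\|_2\le(2Ck\s)^{1/(2-\gamma)}$ and hence $\|\phi_\rho\|_\infty\to 0$ as $\s\to 0$. (The paper's own proof of Lemma~\ref{le:delta2} is written with the same shortcut.)
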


\begin{proof}
Assume by contradiction that there exists $c>0$ such that, for all $n\in\mathbb{N}$, $n\ge 1$, there exist  $a_1^n,\ldots, a_k^n\in \R$ with $\max_{ i=1,\ldots,k}|a_i^n|<1/n$ and such that 
$\|\phi_{\rho_n}\|_\infty>c$, where $\rho_n=\sum_{i=1}^k a_i^n \d_{x_i}$ and $\phi_{\rho_n}$ is the minimum of the functional associated with $\rho_n$. Since $\rho_n \to 0$ in $\X^*$ and $-\sum_{i=1}^k  \d_{x_i}\le \rho_n\le \sum_{i=1}^k  \d_{x_i}$,  by Theorem \ref{th:convergenza}, we infer that $\phi_{\rho_n}\to \phi_0=0$ uniformly in $\mathbb{R}^N$ and we reach a contradiction.
\end{proof}

Finally, we show that the minimizer is bounded uniformly with respect to the coefficients and that the location $x_{i}$, $i=1,\ldots,k$, do not influence this bound. Again, as a consequence, the minimizer $\phi_{\rho}$  is smooth on $\Gamma$ as soon as the charges are far from each other.

\begin{lemma}\label{le:delta2}
There exists $C=C(a_1,\ldots,a_k)>0$ such that, for all $x_i\in \RN$, $i=1,\ldots,k$, $\|\phi_\rho\|_\infty<C$.
\end{lemma}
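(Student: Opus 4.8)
The plan is to read off the bound directly from the energy inequality satisfied by the minimizer, together with a \emph{quantitative} form of the embedding $\X\hookrightarrow L^\infty$; the whole point is that both ingredients are uniform in the positions $x_1,\dots,x_k$.

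First I would use that $\phi_\rho$ minimizes $I$ and that $0\in\X$, so $I(\phi_\rho)\le I(0)=0$, that is,
\[
\irn\Big(1-\sqrt{1-|\n\phi_\rho|^2}\Big)\,dx\;\le\;\sum_{i=1}^{k}a_i\,\phi_\rho(x_i)\;\le\;\Big(\sum_{i=1}^{k}|a_i|\Big)\,\|\phi_\rho\|_\infty .
\]
By the left inequality in \eqref{ineq} the left-hand side is at least $\tfrac12\|\n\phi_\rho\|_2^2$, whence
\[
\tfrac12\,\|\n\phi_\rho\|_2^{2}\;\le\;\Big(\sum_{i=1}^{k}|a_i|\Big)\,\|\phi_\rho\|_\infty ,
\]
and I emphasize that the factor $\sum_{i}|a_i|$ does not depend on the location of the charges.

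The second ingredient is the quantitative version of Lemma~\ref{lemma21}$(\ref{it:embLinf})$: there is a constant $C_N>0$, depending only on $N$, with $\|\phi\|_\infty\le C_N\|\n\phi\|_2^{2/N}$ for every $\phi\in\X$. I would obtain it by a level-set argument: after replacing $\phi$ by $-\phi$ if needed we may assume $M:=\|\phi\|_\infty=\sup_{\RN}\phi>0$, which is attained at some $x_0$ by continuity and decay (Lemma~\ref{lemma21}$(\ref{it:C0})$); since $|\n\phi|\le1$ a.e., we get $\phi(x)\ge M-|x-x_0|\ge M/2$ on $B_{M/2}(x_0)$, hence $\|\phi\|_{2^*}^{2^*}\ge (M/2)^{2^*}|B_{M/2}(x_0)|=c_N M^{2^*+N}$; comparing with the Sobolev inequality $\|\phi\|_{2^*}\le C\|\n\phi\|_2$ and using the elementary identity $2^*/(2^*+N)=2/N$ gives the claim. (A linear estimate $\|\phi\|_\infty\le C\|\n\phi\|_2$ is in fact \emph{false} on $\X$ — small cones provide counterexamples — so the sublinear exponent $2/N<2$ is exactly what makes the scheme close.)

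Putting the two parts together, $\tfrac12\|\n\phi_\rho\|_2^{2}\le C_N\big(\sum_i|a_i|\big)\|\n\phi_\rho\|_2^{2/N}$, hence $\|\n\phi_\rho\|_2^{2-2/N}\le 2C_N\sum_i|a_i|$ and so $\|\n\phi_\rho\|_2\le\big(2C_N\sum_i|a_i|\big)^{N/(2N-2)}$; feeding this into the quantitative embedding yields $\|\phi_\rho\|_\infty\le C_N\big(2C_N\sum_i|a_i|\big)^{1/(N-1)}=:C(a_1,\dots,a_k)$, a bound depending only on the coefficients, which is the assertion. Everything here is routine once the second step is in place, and that step is really the only delicate point, because Lemma~\ref{lemma21}$(\ref{it:embLinf})$ is stated without a rate and, as just noted, the dependence of $\|\phi\|_\infty$ on $\|\n\phi\|_2$ on $\X$ is genuinely sublinear. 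If one does not want to isolate the sharp exponent $2/N$, the cruder bound $\|\phi\|_\infty\le C_N\big(\|\n\phi\|_2^{\alpha_1}+\|\n\phi\|_2^{\alpha_2}\big)$ with $\alpha_1,\alpha_2\in(0,1)$, which already falls out of the proof of Lemma~\ref{lemma21}, is sufficient: any exponents strictly below $2$ make the final algebra go through and give the same conclusion.
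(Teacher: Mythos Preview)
Your proof follows the same outline as the paper's: use $I(\phi_\rho)\le I(0)=0$ together with \eqref{ineq} to get $\tfrac12\|\n\phi_\rho\|_2^2\le\big(\sum_i|a_i|\big)\|\phi_\rho\|_\infty$, and then close the loop with the embedding $\X\hookrightarrow L^\infty$. The difference is that you supply a \emph{quantitative} version of that embedding, namely $\|\phi\|_\infty\le C_N\|\n\phi\|_2^{2/N}$, via a clean level-set argument, whereas the paper simply writes the chain
\[
c\|\phi_\rho\|_\infty^2\le c\|\n\phi_\rho\|_2^2\le\irn\Big(1-\sqrt{1-|\n\phi_\rho|^2}\Big)\,dx\le\langle\rho,\phi_\rho\rangle\le C(a_1,\ldots,a_k)\|\phi_\rho\|_\infty
\]
and reads off the bound. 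As you correctly observe, the first inequality in this chain amounts to a \emph{linear} bound $\|\phi\|_\infty\lesssim\|\n\phi\|_2$ on $\X$, which is in fact false (truncated cones $\phi_M(x)=(M-|x|)_+$ with $M\to0$ give $\|\phi_M\|_\infty=M$ but $\|\n\phi_M\|_2\sim M^{N/2}$). Your sublinear exponent $2/N<2$ is exactly what makes the algebra close. So your argument is not merely a different route: it is a more careful execution of the same idea that also repairs a slip in the paper's own proof of this lemma.
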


\begin{proof}
	Since $I(\phi_\rho)\le 0$ and using \eqref{ineq} and the continuous embedding of $\X$ into $L^\infty(\RN)$, we have
	\[
	c\|\phi_\rho\|_\infty^2
	\le c\|\n \phi_\rho\|_2^2
	\le \irn \Big(1 -\sqrt{1-|\n \phi_\rho|^2}\Big) dx \le \langle \rho,\phi_\rho \rangle
	\le C(a_1,\ldots,a_k) \|\phi_\rho\|_\infty.
	\]
\end{proof}

With these lemmas, we can complete the proof of Theorem \ref{th:deltafinale}. 
\begin{proof}[Proof of Theorem \ref{th:deltafinale}]
We start with the key argument to prove that $\phi_{\rho}$ is a weak solution outside $\{x_1,\ldots,x_k\}$.

\medbreak

\noindent{\it Claim 1: for every bounded domain $\Omega$ such that $\bar{\Omega}\subset\mathbb{R}^N\setminus\{x_1,\ldots,x_k\}$, there exists a unique distributional solution $\bar{\phi}_\rho$ of the problem
	\[
	\left\{
	\begin{array}{ll}
	-\dv \left( \dfrac{\n \phi}{\sqrt{1-|\n \phi_\rho|^2}}\right)=0, & \hbox{in }\O,
	\\
	\phi=\phi_\rho, & \hbox{on }\de \O.
	\end{array}
	\right.
	\]
	}
The proof of this claim follows from a general result of Trudinger on divergence elliptic operators with measurable coefficients, see \cite[Theorem 3.2]{Trud}. The assumption \cite[(3.21)]{Trud} is clearly satisfied in our setting since the right-hand side of the equation is zero in $\Omega$.

\medbreak
	 
\noindent{\it Claim 2: we have $\bar{\phi}_\rho=\phi_\rho$.} By the arbitrariness of $\Omega$ in $\mathbb{R}^N\setminus\{x_1,\ldots,x_k\}$, we deduce that $\bar{\phi}_\rho$ is the unique distributional solution of
	\[
	\left\{
	\begin{array}{ll}
	-\dv \left( \dfrac{\n \phi}{\sqrt{1-|\n \phi_\rho|^2}}\right)=0, & \hbox{in }\RN\setminus\{x_1,\ldots,x_k\},
	\\
	\phi(x)\to 0, &\hbox{as }x\to \infty.
	\end{array}
	\right.
	\]
Lemma \ref{lem:delta2} and the uniqueness of $\bar{\phi}_\rho$ then imply that $\bar{\phi}_\rho=\phi_\rho$ a.e. in $\RN$.

\medbreak

\noindent{\it Proof of (\ref{it:t161}).} Using the same arguments as in the proof of Lemma \ref{lem:delta2}, we aim to prove that $K=\emptyset$.
Assume by contradiction that $K\neq\emptyset$. Then there exist $x,y\in\partial\Omega$ such that $x\neq y$, $\overline{xy}\subset\Omega$ and $|\phi_\rho(x)-\phi_\rho(y)|=|x-y|$.
Without loss of generality, we can assume that $\phi_\rho(x)>\phi_\rho(y)$. It is easy to see that for all $t\in (0,1)$
\begin{equation*} 
\phi_\rho (tx +(1-t)y)=\phi_\rho(y)+t|x-y|.
\end{equation*}
Two possibilities occur: either $\overline{xy}$ intersects $\G$ in a finite number of points (possibly zero), or  $\overline{xy}$  intersects $\G$ in an infinite number of points. In the first case, we conclude as before. In the second case, without loss of generality and applying, if necessary, again \cite[Theorem~3.2]{BS}, we can assume that $\overline{xy}$ can be any piece of $\overline{x_1 x_2}$. Fixing $\eps>0$ such that $2 \eps<\min_{ i,j=1,\ldots,k, \ i\neq j}|x_i-x_j|$, by Lemma \ref{le:delta1}, there exists  $\s>0$ such that, if  $\max_{ i=1,\ldots,k}|a_i|<\s$, then $\|\phi_\rho\|_\infty<\eps$. Since we can find $x',y'\in \overline{x_1x_2}$ with $|x'-y'|>2 \eps$, we reach a contradiction, indeed
\[
2 \eps<|x'-y'|=|\phi_\rho(x')-\phi_\rho(y')|<2 \eps.
\]
The behavior of the gradient of $\phi_\rho$ near the singularities $x_i$ is a consequence of \cite[Theorem 1.5]{E} (see also \cite[Theorem 1.4]{K}).

\medbreak

\noindent{\it Proof of (\ref{it:t162}).} This simply follows by modifying the arguments used to prove (\ref{it:t161}) and taking $\tau=2 C$, where $C$ is given by Lemma \ref{le:delta2}.
\end{proof}

\begin{remark}\label{rem:condTrud}
Since $\rho$ is the divergence of the field 
\[
	F(x) = \sum_{i=1}^{k} b_i \frac{x-x_i}{|x-x_i|^{N}}, 
\]
where $b_i=\frac{a_i}{(N-2)|\mathbb{S}^{N-1}|}$, the equation \eqref{eq:k} can be written 
$$-\dv \left( \dfrac{\n \phi}{\sqrt{1-|\n \phi_\rho|^2}}\right)=\dv F(x),\quad\text{ for } x\in\mathbb{R}^{N}.$$
Trudinger's result  \cite[Theorem 3.2]{Trud} then applies in a bounded open set $\Omega$ containing all the points $x_i$ if 
\begin{equation*}
	\left|\sum_{i=1}^{k} \sum_{j=1}^{k} b_i b_j \int_\O\frac{(x-x_i)\cdot(x-x_j)}{|x-x_i|^{N}|x-x_j|^{N}}\sqrt{1-|\n \phi_\rho|^2} \ dx\right|<+\infty.
	\end{equation*}	
But checking this last assumption is delicate as it requires a precise study of the behaviour of $|\n \phi_\rho|$ around the points $x_{i}$. 
\end{remark}

\medbreak

We conclude this section by showing that we can prove that $\phi_\rho$ satisfies classically \eqref{eq:k} in $\RN\setminus\{x_1,\ldots,x_k\}$  under some symmetry assumptions. This is yet another case where indeed $\phi_{\rho}$ solves the partial differential equation outside the points $\{x_1,\ldots,x_k\}$. 
Assume for simplicity that we have two charges with equal coefficients, namely 
$$\rho =a (\delta_{x_{1}}+\delta_{x_{2}}).$$ By uniqueness of the minimizer and since the functional is now invariant under the orthogonal transformations that exchanges $x_{1}$ and $x_{2}$, we infer that $\phi_{\rho}$ is symmetric and therefore cannot be affine with slope $1$ on the segment $\overline{x_1x_2}$. Therefore, the assertion (\ref{it:413}) of  Lemma \ref{lem:delta2} allows to conclude. The same argument can be used when we have a symmetric configuration of charges with equal coefficient. 

In some special situations, we can argue without assuming any symmetry to prove that the minimizer is not affine with slope $1$ on some of the edges of $\Gamma$. 
As an example, take three charges located at $x_{1}$, $x_{2}$ and $x_{3}$ and suppose that those points are not colinear. One can order the value of $\phi_{\rho}$ and assume without loss of generality that $\phi_{\rho}(x_{1})\le\phi_{\rho}(x_{2})\le\phi_{\rho}(x_{3})$. Then, $\phi_{\rho}$ cannot be affine with slope $1$ on $\overline{x_1x_2}$ and $\overline{x_2x_3}$ since otherwise we have
$$ 
\phi_{\rho}(x_{3})-\phi_{\rho}(x_{1}) = \phi_{\rho}(x_{3})-\phi_{\rho}(x_{2}) + \phi_{\rho}(x_{2})-\phi_{\rho}(x_{1}) = |x_{3}-x_{2}| + |x_{2}-x_{1}|> |x_{3}-x_{1}|.
$$
Other similar situations can be ruled out with the same argument but this is clearly an incomplete and unsatisfactory approach towards the understanding of the general case.

\section{Approximations of the minimizer}\label{se:approx}

In this section, we provide several ways to approximate the minimizer $\phi_{\rho}$ by a sequence of solutions of some approximating PDEs. It is unfortunately unclear that the sequence of PDEs leads to \eqref{eq:BI} at the limit except if $\rho$ is smooth but this case does not require any approximation procedure since it is covered by Theorem \ref{th:limitato}.

\subsection{Approximation through a finite order expansion of the Lagrangian}\label{sse:fop}

One way to overcome, in some sense, the non differentiability of the functional \eqref{eq:functionalBI} was proposed by Fortunato, Orsina and Pisani \cite{FOP} where the authors consider $N=3$ and observe that for $b$ large,
\begin{equation}
\label{approx2}
\mathcal{L}_{\rm BI} = b^2\left(1-\sqrt{1-\frac{|\n \phi|^2}{b^2}}\right) \sim \frac{|\n \phi |^2}{2}   + \frac{|\n \phi |^4}{8b^2}.
\end{equation}
Then, for every density $\rho\in L^1(\RT)$, the Euler equation 
\[
-\dv\left(\left(1+\frac{1}{2b^{2}}|\n \phi|^2\right)\n \phi\right)=\rho, \quad\hbox{in }\RT,
\]
has an unique finite energy solution \cite{FOP}. This means somehow that if we substitute Maxwell's Lagragian by the right-hand side of \eqref{approx2}, the contradiction to the principle of finiteness of the energy disappears. 

We extend the study of approximated solutions \cite{FOP}, see also \cite{Wang}, by looking for higher order expansions, assuming $\rho$ is in the dual space of $D^{1,2}(\RN)\cap D^{1,2n}(\RN)$ for some $n\ge 1$. This include the case of a Radon measure. 
Setting $b=1$, for $n\ge 1$, we define 
$\X_{2n}$ as the completion of $C_c^\infty (\RN)$ with respect to the norm defined by 
\begin{equation*} 
 \|\phi\|^{2}_{\X_{2n}}:=\int_{\RN}|\nabla\phi|^2 + \left(\int_{\RN}|\nabla\phi|^{2n} dx\right)^{1/n}.
\end{equation*}
Formally, the operator $Q^{-}$ (defined in \eqref{Q-}) can be expended as a sum of $2h$-Laplacian, namely
$$Q^{-}(\phi) = -\sum_{h=1}^{\infty} \alpha_{h}\Delta_{2h}\phi,$$
where for all $h\ge 1$, $\alpha_{h}>0$  (the exact values of the coefficient $\alpha_{h}$ are given in \cite{Wang,K}, they are not important for our purpose) and $\Delta_{2h}\phi := \dv(|\nabla \phi|^{2h-2}\nabla \phi)$. The curvature operator is formally the Gateaux derivative of the functional 
$$\irn\left(1 - \sqrt{1-|\nabla \phi|^2}\right) dx = \irn\sum_{h=1}^{\infty}\frac{\alpha_{h}}{2h}|\nabla \phi|^{2h}\, dx,$$
where the power series in the right hand side converges pointwise when $|\nabla \phi(x)|\le 1$.  Assuming $\rho\in\X_{2n}^{*}$, let us denote the $n$th approximation of the functional \eqref{eq:functionalBI} by 
\begin{equation*} 
I_{n}:=\phi\in \X_{2n}  \mapsto  \sum_{h=1}^{n}\frac{\alpha_{h}}{2h} \irn|\nabla \phi|^{2h}\,dx
- \langle \rho, \phi\rangle_{\X_{2n}}.
\end{equation*}
This functional is $C^{1}$ and we have existence and uniqueness of a critical point.

\begin{proposition}\label{pr:approx}
Given $n_0\ge 1$ and $\rho\in \X_{2n_0}^{*}$, then, for all $n\ge n_0$, the functional $I_{n}:\X_{2n}\to\R$ has one and only one critical point. 
\end{proposition}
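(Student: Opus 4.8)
The plan is to treat $I_n$ exactly as the full functional $I$ was treated in Section \ref{se:relaxed}, but now in the reflexive Banach space $\X_{2n}$ where the functional is genuinely $C^1$, so that existence follows from the direct method and uniqueness from strict convexity. First I would record that $\X_{2n}$ is a uniformly convex (hence reflexive) Banach space: its norm is built from the $D^{1,2}$ and $D^{1,2n}$ norms, both of which are uniformly convex, and a standard argument shows the combination is too. Since $n\ge n_0$, the embedding $\X_{2n}\hookrightarrow \X_{2n_0}$ is continuous (one has $\|\nabla\phi\|_{2}$ controlled directly, and interpolation between $L^2$ and $L^{2n}$ controls $\|\nabla\phi\|_{2h}$ for $2\le 2h\le 2n$), so $\rho\in\X_{2n_0}^{*}$ restricts to an element of $\X_{2n}^{*}$ and the pairing $\langle\rho,\phi\rangle_{\X_{2n}}$ makes sense; moreover $\phi\mapsto\langle\rho,\phi\rangle$ is weakly continuous on $\X_{2n}$.

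Next I would verify the hypotheses of the direct method. Coercivity: by \eqref{ineq}-type elementary estimates (here just $\sum_{h=1}^n \frac{\alpha_h}{2h}\|\nabla\phi\|_{2h}^{2h}\ge \frac{\alpha_1}{2}\|\nabla\phi\|_2^2$, keeping also the top term $\frac{\alpha_n}{2n}\|\nabla\phi\|_{2n}^{2n}$) one gets
\[
I_n(\phi)\ge \frac{\alpha_1}{2}\|\nabla\phi\|_2^2 + \frac{\alpha_n}{2n}\|\nabla\phi\|_{2n}^{2n} - \|\rho\|_{\X_{2n}^*}\|\phi\|_{\X_{2n}},
\]
which tends to $+\infty$ as $\|\phi\|_{\X_{2n}}\to\infty$ because the quadratic and the degree-$2n$ terms together dominate the term $\|\rho\|\|\phi\|_{\X_{2n}}$ (recall $\|\phi\|_{\X_{2n}}^2 = \|\nabla\phi\|_2^2 + \|\nabla\phi\|_{2n}^2$). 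Weak lower semicontinuity: each $\phi\mapsto\irn|\nabla\phi|^{2h}\,dx$ is convex and strongly continuous on $\X_{2n}$ (for $2\le 2h\le 2n$, interpolation bounds $\|\nabla\phi\|_{2h}$ by the $\X_{2n}$ norm), hence weakly l.s.c., while $-\langle\rho,\cdot\rangle$ is weakly continuous; so $I_n$ is weakly l.s.c. Strict convexity: the integrand $y\mapsto\sum_{h=1}^n\frac{\alpha_h}{2h}|y|^{2h}$ is strictly convex in $y\in\R^N$ (already the $h=1$ term $\frac12|y|^2$ is strictly convex), so $\phi\mapsto \sum_h\frac{\alpha_h}{2h}\irn|\nabla\phi|^{2h}$ is strictly convex, and adding the affine term $-\langle\rho,\cdot\rangle$ preserves strict convexity. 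Existence of a minimizer then follows by taking a minimizing sequence, extracting a weakly convergent subsequence by reflexivity, and passing to the limit; strict convexity gives uniqueness of the minimizer.

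Finally I would note that, since $I_n\in C^1(\X_{2n},\R)$ — differentiability of each $\phi\mapsto\irn|\nabla\phi|^{2h}$ with derivative $\psi\mapsto 2h\irn|\nabla\phi|^{2h-2}\nabla\phi\cdot\nabla\psi$ is standard, using that $|\nabla\phi|^{2h-1}\in L^{2n/(2h-1)}$ and Hölder — critical points in the classical sense coincide with minimizers: any critical point satisfies $\langle I_n'(\phi),\phi-\psi\rangle=0$ for all $\psi$, and by convexity $I_n(\psi)\ge I_n(\phi)+\langle I_n'(\phi),\psi-\phi\rangle = I_n(\phi)$, so $\phi$ is the (unique) minimizer; conversely the minimizer is a critical point since the functional is $C^1$ on the whole space. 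Hence there is one and only one critical point. The only mildly delicate point is the uniform convexity of $\X_{2n}$ and the interpolation estimates needed to see that the intermediate terms $\|\nabla\phi\|_{2h}$, $1<h<n$, are controlled by and continuous with respect to the $\X_{2n}$ norm; everything else is a routine transcription of the arguments already carried out for $I$ in Lemmas \ref{blabla}–\ref{pr:min}, now in a setting where no relaxation is needed because the functional is smooth.
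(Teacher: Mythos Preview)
Your proposal is correct and follows exactly the approach the paper takes: existence via the direct method (coercivity, weak lower semicontinuity, reflexivity of $\X_{2n}$) and uniqueness via strict convexity, together with the observation that $I_n$ is $C^1$ so critical points and minimizers coincide. The paper in fact dismisses the proof as standard and gives only this outline; you have simply written out the details.
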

\begin{proof}
The proof is standard and does not deserve many details. Existence follows from the direct method of the calculus of variation since this functional is bounded from below, coercive
and weakly lower semicontinuous on $\X_{2n}$. Uniqueness follows from the strict convexity of $I_{n}$.
\end{proof}

We now describe the densities covered by this statement. As soon as $2n>\max\{N,2^*\}$, Sobolev inequality combined with Morrey inequality show that $\X_{2n}$ is continuously imbedded in $C^{0,\beta_{n}}_{0}(\RN)$, with $\beta_{n}=1-\frac{N}{2n}$, where we recall that $u\in C^{0,\beta_{n}}_{0}(\RN)$ if there exists $C>0$ such that for every $x,y\in\RN$,
$$|u(x)-u(y)|\le C|x-y|^{\beta_{n}}\quad \text{ and }\ \lim_{|x|\to\infty}u(x)=0.$$	
It follows that if $2n>\max\{N,2^*\}$, $\X_{2n}\subset C_{0}(\RN)$ and therefore $(C_{0}(\RN))^{*}\subset \X_{2n}^*$.

As first important examples, we cover the case $\rho\in L^{1}(\RN)$ since the linear functional 
$$\phi\mapsto \irn \rho\phi\, dx$$
is bounded on $C_{0}(\RN)$ and the case {$\rho = \sum_{i=1}^k a_i \d_{x_i}$ since the linear functional
$$\phi\mapsto\sum_{i=1}^k a_i \phi(x_i)$$
is also bounded on $C_{0}(\RN)$. 
In fact, we can cover the case of Radon measure. Indeed by Riesz-Markov-Kakutani Representation Theorem, see for instance \cite{Fonseca-Leoni}, $(C_{0}(\RN))^{*}$ can be identified with $\mathcal{M}(\RN;\R)$, the space of signed finite Radon measures (i.e. Borel regular measures which are finite on compact sets of $\R^{N}$). This means that if $\rho\in (C_{0}(\RN))^{*}$, there exists a unique Radon measure $\mu$ such that
$$\langle \rho, \phi\rangle = \irn \phi\, d\mu,\quad \text{for all }\phi\in C_{0}(\RN).$$
\\
Observe also that we can also deal with $L^{1}_{\rm loc}(\RN)$ densities. Indeed, the dual spaces of $C_{c}(\RN)$ and $C_{0}(\RN)$ coincide in the sense that if $\rho\in (C_{0}(\RN))^{*}$, then the restriction of $\rho$ to $C_{c}(\RN)$ is a linear bounded functional whereas if $\rho\in (C_{c}(\RN))^{*}$, then it has a unique extension $\bar\rho\in (C_{0}(\RN))^{*}$ such that $\langle \bar\rho, \phi\rangle=\langle \rho, \phi\rangle$ for $\phi\in C_{c}(\RN)$ and the norms of $\rho$ and $\bar \rho$ are equal. 

By interpolation, if $\phi \in \X_{2n}$, then $\phi\in D^{1,q}(\RN)$ for any $2\le q\le 2n$. It follows that $\X_{2n}^{*}$ also contains the weak divergence of any vector field $\xi\in L^{q'}(\RN;\RN)$, where $1/q+1/q'=1$ and $2\le q\le 2n$, i.e. $\frac{2n}{2n-1}\le q'\le 2$. In particular, observe that if $\xi\in L^1(\RN;\RN)$, then $\dv \xi\in \X^{*}$ but in general we cannot conclude that $\dv \xi\in \X_{2n}^{*}$ for a finite integer $n$. This implies that for $\xi\in L^1(\RN;\RN)$, we can take $\rho=\dv \xi$ in Theorem \ref{thm:relaxedBI} but this case is not cover by Proposition \ref{pr:approx} nor by Theorem \ref{th:approxK}. 

\medbreak

Let $\rho \in \X_{2n_0}^*$ for some $n_0\ge 1$ and, for all $n\ge n_0$, let $\phi_n$ be the unique solution of the approximated problem given by Proposition \ref{pr:approx}. The next theorem was basically the heart of the existence of a minimizer of the functional in \cite{K}. We give it here in a general setting, providing a detailed proof for completeness. We mainly follow the idea of \cite[Subsection 2.4 (first part)]{K}.

\begin{theorem}\label{th:approxK}
If $\rho \in \X_{2n_0}^*$ for some $n_0\ge 1$, then $\phi_n$ tends to $\phi_\rho$ weakly in $\X_{2m}$ for all $m\ge n_0$ and uniformly on compact sets.
\end{theorem}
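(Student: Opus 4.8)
The plan is to run a standard direct-method compactness argument: show that the sequence $(\phi_n)_n$ is bounded in every $\X_{2m}$, extract a weak limit, identify the limit with $\phi_\rho$ by passing to the inequality defining the minimizer, and finally upgrade to convergence along the full sequence by uniqueness. First I would establish the uniform bound. Since $\phi_n$ minimizes $I_n$ on $\X_{2n}$, we have $I_n(\phi_n)\le I_n(0)=0$; using that every coefficient $\alpha_h>0$ and the elementary bound $\frac12 t\le 1-\sqrt{1-t}\le t$ — more precisely the first few terms of the series — together with $\langle\rho,\phi_n\rangle_{\X_{2n}}\le\|\rho\|_{\X_{2n_0}^*}\|\phi_n\|_{\X_{2n_0}}$, one gets
\[
\tfrac12\|\n\phi_n\|_2^2+\tfrac{\alpha_{n_0}}{2n_0}\|\n\phi_n\|_{2n_0}^{2n_0}\le\sum_{h=1}^n\frac{\alpha_h}{2h}\irn|\n\phi_n|^{2h}\,dx\le\langle\rho,\phi_n\rangle_{\X_{2n}}\le C\|\phi_n\|_{\X_{2n_0}}.
\]
This yields a bound on $\|\phi_n\|_{\X_{2n_0}}$ uniform in $n\ge n_0$, hence on $\|\n\phi_n\|_2$ and on $\|\n\phi_n\|_{2n_0}$. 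For any fixed $m\ge n_0$, the same chain of inequalities (keeping the $h=m$ term once $n\ge m$) gives a uniform bound on $\|\n\phi_n\|_{2m}$ as well; by weak compactness of $\X_{2m}$ there is $\bar\phi\in\X_{2m}$ with $\phi_n\weakto\bar\phi$ in $\X_{2m}$ along a subsequence, and by the Morrey-type embedding of the excerpt (choosing $m$ with $2m>\max\{N,2^*\}$, which is legitimate since we may take the subsequence inside a large $\X_{2m}$) the convergence is also uniform on compact sets; in particular $\bar\phi\in\X$ with $\|\n\bar\phi\|_\infty\le1$ by weak lower semicontinuity of the $L^{2m}$ norms as $m\to\infty$, or directly since a uniform $L^q$ bound on $\n\phi_n$ for all $q$ forces $\|\n\bar\phi\|_\infty\le1$.

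Next I would identify $\bar\phi$ with $\phi_\rho$. Fix any $\psi\in C_c^\infty(\RN)$ with $\|\n\psi\|_\infty<1$. For $n$ large, $\psi\in\X_{2n}$, so by minimality $I_n(\phi_n)\le I_n(\psi)$, i.e.
\[
\sum_{h=1}^n\frac{\alpha_h}{2h}\irn|\n\phi_n|^{2h}\,dx-\langle\rho,\phi_n\rangle\le\sum_{h=1}^n\frac{\alpha_h}{2h}\irn|\n\psi|^{2h}\,dx-\langle\rho,\psi\rangle.
\]
On the right-hand side, since $\|\n\psi\|_\infty<1$ the partial sums increase to $\irn(1-\sqrt{1-|\n\psi|^2})\,dx$. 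On the left, each fixed block $\sum_{h=1}^{m}\frac{\alpha_h}{2h}\irn|\n\phi_n|^{2h}$ is weakly lower semicontinuous, so $\liminf_n$ of the full left-hand sum is $\ge\sum_{h=1}^m\frac{\alpha_h}{2h}\irn|\n\bar\phi|^{2h}$ for every $m$, hence $\ge\irn(1-\sqrt{1-|\n\bar\phi|^2})\,dx$ by monotone convergence; and $\langle\rho,\phi_n\rangle\to\langle\rho,\bar\phi\rangle$ since $\phi_n\weakto\bar\phi$ in $\X_{2n_0}\subset C_0(\RN)$ and $\rho\in\X_{2n_0}^*$. Passing to the limit gives $I(\bar\phi)\le I(\psi)$ for all such $\psi$, and since these $\psi$ are dense in $\X$ (one mollifies and truncates, using $\|\n\psi\|_\infty\le1$, then scales slightly below $1$) while $I$ is continuous on $\X$, we conclude $I(\bar\phi)\le I(\phi)$ for all $\phi\in\X$, so $\bar\phi=\phi_\rho$ by uniqueness (Theorem \ref{thm:relaxedBI}).

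Finally, since the limit $\phi_\rho$ is independent of the subsequence and, as shown, every weakly-$\X_{2m}$-convergent subsequence of the bounded sequence $(\phi_n)_n$ converges to it, a standard subsequence-of-subsequence argument gives $\phi_n\weakto\phi_\rho$ in $\X_{2m}$ for every $m\ge n_0$, and uniformly on compact sets. The main obstacle is the limit passage in the $n$-dependent sum $\sum_{h=1}^n\frac{\alpha_h}{2h}\irn|\n\phi_n|^{2h}$: one cannot use dominated convergence on the series directly because the number of terms grows with $n$, so the argument must be organized as "fix a finite number of terms, take $\liminf$ in $n$ by weak lower semicontinuity, then let the number of terms go to infinity by monotonicity," which is exactly where the positivity of the $\alpha_h$ and the pointwise convergence of the power series for $|\n\phi|\le1$ are essential.
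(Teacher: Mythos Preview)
Your argument is correct and follows essentially the same route as the paper's proof: uniform $\X_{2m}$-bounds from $I_n(\phi_n)\le 0$, weak compactness and a diagonal extraction, the constraint $|\n\bar\phi|\le 1$ a.e.\ from summability of the full series $\sum_h\frac{\alpha_h}{2h}\|\n\bar\phi\|_{2h}^{2h}$, and identification of the limit via weak lower semicontinuity of the truncated functionals. The only difference is organizational: the paper packages the identification step through the monotone sequence $\mathcal{I}_n=\inf_{\X_{2n}}I_n$ and shows $I(\bar\phi)=\lim_n\mathcal{I}_n=\min_\X I$ directly, whereas you compare $I_n(\phi_n)\le I_n(\psi)$ against smooth compactly supported $\psi$ with $\|\n\psi\|_\infty<1$ and then invoke density in $\X$ together with the continuity of $I$ (Lemma~\ref{blabla}); the paper's route spares you the density step, yours spares the auxiliary sequence, but the underlying mechanism is the same.
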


\begin{proof}
Let $\mathcal{I}_n=I_n (\phi_n)$.
For every $m<n$, we have $\I_m \le I_m(\phi_n) < \I_n$.
Thus $(\I_n)_n$ is a strictly increasing sequence.
Since $\I_n<0$ for all $n$, such a sequence is bounded from above.
Hence we infer that $\I:=\lim_{n\to\infty} \I_n\in \R$.\\
Since, for all $m<n$, $I_m(\phi_n)<\I_n<\I$, we deduce that $\|\nabla \phi_n\|_{2m} \le C(m)$ by coercivity.
Then, we infer that $(\phi_n)_n$ is weakly convergent in $\X_{2m}$ 
for all $m\ge  1$.
Hence, by a diagonal argument, we conclude that the limit $\bar{\phi}$ is the same for all $m$ and belongs to the $\bigcap_{m\ge  1} \X_{2m}$.\\
For every $m,n\ge 1$, with $n_0 \le m \le n$, since $\X_{2n} \subset \X_{2n_0}$ and $\I_n<0$, we have
\begin{align*}
\frac{\alpha_1}{2} \|\nabla \phi_n\|_2^2 
+ \frac{\alpha_{n_0}}{2n_0} \|\nabla \phi_n\|_{2n_0}^{2n_0}
 & \le 
\sum_{h=1}^{m} \frac{\alpha_h}{2h} \|\nabla \phi_n\|_{2h}^{2h}\\
& \le 
\sum_{h=1}^{n} \frac{\alpha_h}{2h} \|\nabla \phi_n\|_{2h}^{2h}\\
&\le 
\|\rho\|_{\X_{2n_0}^*} \|\phi_n\|_{\X_{2n_0}}. 
 \end{align*}
It follows that $\|\phi_n\|_{\X_{2n_0}} \le C(n_0)$ and
\[
\sum_{h=1}^{m} \frac{\alpha_h}{2h} \|\nabla \phi_n\|_{2h}^{2h}
\le C(n_0).
\] 
We can now take the limit as $n \to +\infty$ in this inequality. Indeed, $\phi_n\rightharpoonup\bar{\phi}$ in $D^{1,2m}(\RN)$ so that the  weak lower semicontinuity of the norms gives
\[
\sum_{h=1}^{m} \frac{\alpha_h}{2h} \|\nabla \bar\phi\|_{2h}^{2h}
\le C(n_0),
\]
for every $m\ge  n_0$.
This in turn implies that 
\[
\sum_{h=1}^{+\infty} \frac{\alpha_h}{2h} \|\nabla \bar \phi\|_{2h}^{2h} 
\le C
\]
from which we deduce that 
$$\limsup_{h\to \infty}\|\nabla \bar \phi\|_{2h} \le 1.$$
We now claim that $|\nabla \bar{\phi}|\le 1$ a.e. in $\mathbb{R}^N$.
Indeed, assume by contradiction that there exist $\varepsilon>0$ and $\Omega\subset\mathbb{R}^N$ with $|\Omega|\neq 0$ such that $|\nabla \bar{\phi}|\ge  1+\varepsilon$ a.e. in $\Omega$. Then, for every $h\ge 1$
\[
|\Omega|^{1/2h}(1+\varepsilon)
\le 
\left(\int_{\Omega} |\nabla \bar{\phi}|^{2h}dx\right)^{1/2h}
\le 
1
\]
which is a contradiction for $h$ large enough. \\
From here, we argue as in \cite[Subsection 2.5]{K} to prove that $\bar{\phi}=\phi_\rho$. We already know that $\bar{\phi}\in\X$. 
We first show that
\begin{equation}
\label{eq:primaK}
I(\bar{\phi})=\I
\end{equation} 
and then that
\begin{equation}
\label{eq:secondaK}
\I=\min_\X I(\phi).
\end{equation}
To get \eqref{eq:primaK}, observe that, since $\X\subset\X_{2m}$ for every $m\ge 1$, we have $I_m(\bar{\phi})\in\mathbb{R}$ and 
$$\lim_{m\to\infty} I_m (\bar{\phi})=I (\bar{\phi}).$$ 
Moreover $\I_m\le I_m(\bar{\phi})<I (\bar{\phi})$ and so $\I\le I (\bar{\phi})$. On the other hand, since $I_m$ is weakly lower semicontinuous, we have
\[
I_m(\bar{\phi}) \le \lim_{n\to\infty} I_m(\phi_n).
\]
But, since $I_m(\phi_n)<\I$ for $m<n$, we have that $I_m(\bar{\phi}) \le \I$ and so, passing to the limit, we get that $I(\bar{\phi})\le \I$.\\
Finally, to show \eqref{eq:secondaK}, assume by contradiction that there exists $\tilde{\phi}\in\X$ such that $I(\tilde{\phi}) =\I-\varepsilon$ for $\varepsilon>0$. Then $\I_m\le I_m(\tilde{\phi})< \I-\varepsilon$ which contradicts the fact that $\I_m >\I-\varepsilon$ for $m$ large enough.\\
The uniform convergence on compact sets follows arguing as in Lemma \ref{lemma21}.
\end{proof}

We end up this section by observing that other approximation schemes can be used. The truncation of the power series to a finite order gives a lower approximation of the action functional. Another truncation was successfully proposed in \cite{BDD} to deal with a related problem and could have been used here as well. Let us set $a_{0}(s)=(1-s)^{-1/2}$ for all $s<1$. Then 
$$I(\phi) = \frac12  \int_{\mathbb{R}^N} A_{0}(|\nabla \phi|^{2})\, dx - \langle \rho, \phi\rangle,$$
where $A_{0}(t)=\displaystyle\int_{0}^{t}a_{0}(s)\, ds$. 
Take $\theta\in (0,1)$ and define $a_{\theta}:\mathbb{R}^+\to \mathbb{R}^{+}$ by 
\[
a_{\theta}(s) =
\begin{cases}
a_{0}(s) &\mbox{for } s\in[0,1-\theta]\\
\gamma s^{n-1} + \delta & \mbox{for } s\in(1-\theta,+\infty)
\end{cases}
\]
where $\gamma$ and $\delta$ are chosen in such a way that $a_{\theta}$ is $C^{1}$.
The truncated functional $I_{\theta,n}:  \X_{2n}\to\mathbb{R}$ defined by
$$
I_{\theta,n}(\phi):= \frac12  \int_{\mathbb{R}^N} A_{\theta}(|\nabla \phi|^{2})\, dx - \langle \rho, \phi\rangle,
$$
where $A_{\theta}(t)=\displaystyle\int_{0}^{t}a_{\theta}(s)\, ds$ gives another lower estimate of $I(\phi)$. Then we can show that given $n\ge 1$ and $\rho\in \X_{2n}^{*}$, the functional $I_{\theta,n}$ has one and only one critical point which is a weak solution of
$$
-\operatorname{div}\left(a_{\theta}(|\nabla \phi|^2)\nabla \phi\right)=\rho.$$ 
In this approach, $n$ is fixed which makes the functional setting easier than in the finite order approximation of the power series. It is chosen in such a way that $\rho\in \X_{2n}^{*}$. Taking a sequence $\theta_k\to 1$, we can show that the sequence of minimizers of $I_{\theta_k,n}$ converge to the minimizer of $I$, giving yet another way to approach the minimizer by a sequence of solutions of approximating problems. 

\subsection{Approximation by mollification of the charge density}

In the previous section, we have shown that we can approximate the minimizer by a sequence of solutions of a family of approximating problems. We next show that, under sufficient conditions, we can approximate the minimizer by a sequence of solutions of the Born-Infeld equation. The starting point is the fact that a smooth charge yields a smooth minimizer that solves the Euler-Lagrange equation associated to the minimization problem. The approximation is then obtained by a first mollification of the charge.

As before, we still denote by $\phi_\rho$ the minimum associated to $\rho\in \X^*$ and we recall Definition \ref{def:rhopos} that we will use.

\begin{theorem}\label{th:convergenza}
Let $\rho \in \X^*$ and suppose that there exist $(\rho_n)_n \subset \X^*$ and $\tilde{\rho}\in \X^*$ such that $\tilde{\rho}\ge 0$, $\rho_n \to \rho$ in $\X^*$ and $-\tilde{\rho}\le \rho_n\le \tilde{\rho}$. Then $\phi_{\rho_n}$ converges to $\phi_\rho$ weakly in $\X$ and uniformly in $\RN$.
\end{theorem}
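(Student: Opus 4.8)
The plan is to establish the convergence of the minimizers $\phi_{\rho_n}$ by exploiting the comparison principle (Lemma~\ref{monotonia}), the compactness result (Lemma~\ref{compactset}), and the uniqueness of the minimizer. First I would note that the hypotheses $-\tilde\rho\le\rho_n\le\tilde\rho$ and Lemma~\ref{monotonia} immediately give the pointwise bound $\phi_{-\tilde\rho}(x)\le\phi_{\rho_n}(x)\le\phi_{\tilde\rho}(x)$ for all $x\in\RN$, uniformly in $n$; in particular $\|\phi_{\rho_n}\|_\infty$ is bounded independently of $n$ (this complements the $\X$-boundedness coming from $\rho_n\to\rho$ in $\X^*$, which makes $(\rho_n)_n$ bounded in $\X^*$). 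Since $(\rho_n)_n$ is bounded in $\X^*$, Lemma~\ref{compactset} yields a $\bar\phi\in\X$ with $\phi_{\rho_n}\weakto\bar\phi$ weakly in $\X$ and uniformly on compact sets, up to a subsequence.

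The heart of the argument is then to identify $\bar\phi=\phi_\rho$. I would do this by passing to the limit in the minimization inequality. For each $n$ and each $\psi\in\X$, we have $I_{\rho_n}(\phi_{\rho_n})\le I_{\rho_n}(\psi)$, i.e.
\[
\irn\Big(1-\sqrt{1-|\n\phi_{\rho_n}|^2}\Big)\,dx-\langle\rho_n,\phi_{\rho_n}\rangle
\le\irn\Big(1-\sqrt{1-|\n\psi|^2}\Big)\,dx-\langle\rho_n,\psi\rangle.
\]
On the right-hand side, $\langle\rho_n,\psi\rangle\to\langle\rho,\psi\rangle$ because $\rho_n\to\rho$ in $\X^*$. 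On the left-hand side, the gradient term is weakly lower semicontinuous in $\X$ (as in Lemma~\ref{blabla}), so $\irn(1-\sqrt{1-|\n\bar\phi|^2})\,dx\le\liminf_n\irn(1-\sqrt{1-|\n\phi_{\rho_n}|^2})\,dx$; and the coupling term $\langle\rho_n,\phi_{\rho_n}\rangle$ converges to $\langle\rho,\bar\phi\rangle$ since $\rho_n\to\rho$ strongly in $\X^*$ while $\phi_{\rho_n}\weakto\bar\phi$ weakly in $\X$ (a strong-weak pairing passes to the limit). Hence $I_\rho(\bar\phi)\le I_\rho(\psi)$ for every $\psi\in\X$, so $\bar\phi$ minimizes $I_\rho$; by the uniqueness in Theorem~\ref{thm:relaxedBI}, $\bar\phi=\phi_\rho$. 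Since the limit is independent of the subsequence, the whole sequence converges.

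It remains to upgrade the uniform-on-compact-sets convergence to uniform convergence on all of $\RN$. Here I would use the uniform pointwise squeeze from the first paragraph together with the decay at infinity encoded in $\X$: both $\phi_{\tilde\rho}$ and $\phi_{-\tilde\rho}$ belong to $\X$, hence by Lemma~\ref{lemma21}$(\ref{it:C0})$ they tend to $0$ at infinity, so for any $\eps>0$ there is $R>0$ with $|\phi_{\tilde\rho}(x)|,|\phi_{-\tilde\rho}(x)|<\eps$ for $|x|>R$; the squeeze then forces $|\phi_{\rho_n}(x)|<\eps$ and $|\phi_\rho(x)|\le\eps$ for $|x|>R$, uniformly in $n$. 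On the compact ball $\overline{B_R}$ the convergence is already uniform, so combining the two regions gives $\|\phi_{\rho_n}-\phi_\rho\|_\infty\to0$. The main obstacle, if any, is the strong-weak passage to the limit in $\langle\rho_n,\phi_{\rho_n}\rangle$ and making sure the uniform-at-infinity control is genuinely uniform in $n$; both are handled by the envelope functions $\phi_{\pm\tilde\rho}\in\X$ provided by the comparison principle.
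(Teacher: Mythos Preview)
Your argument is correct and follows essentially the same route as the paper: the comparison principle provides the uniform envelope $\phi_{-\tilde\rho}\le\phi_{\rho_n}\le\phi_{\tilde\rho}$, Lemma~\ref{compactset} gives weak and locally uniform subconvergence, the limit is identified with $\phi_\rho$ via weak lower semicontinuity of the gradient term together with the strong--weak passage in $\langle\rho_n,\phi_{\rho_n}\rangle$, and the envelope upgrades local uniform convergence to global uniform convergence. The only cosmetic difference is that you test the minimality against an arbitrary $\psi\in\X$ whereas the paper tests directly against $\phi_\rho$; both yield $I_\rho(\bar\phi)\le I_\rho(\phi_\rho)$ and hence $\bar\phi=\phi_\rho$ by uniqueness.
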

\begin{proof}
Observe that by Lemma \ref{monotonia}, $\phi_{\tilde{\rho}}\ge 0$ and $\phi_{-\tilde \rho}\le \phi_{\rho_n}\le \phi_{\tilde{\rho}}$. 
It is easy to see that $\phi_{-\tilde \rho}=-\phi_{\tilde \rho}$ 
and so $|\phi_{\rho_n}|\le \phi_{\tilde \rho}$. By Lemma \ref{compactset}, there exists $\bar \phi\in \X$ such that $\phi_{\rho_n}$ converges to $ \bar \phi$ weakly in $\X$ and uniformly on compact sets. 
This implies that  $|\bar \phi|\le 2 \phi_{\tilde \rho}$. 
Thanks to this uniform decay at infinity, it is easy to see that $\phi_{\rho_n}\to \bar \phi$ uniformly on $\RN$.\\
To show that $\bar \phi=\phi_{\rho}$, let us denote
\[
I_\rho(\phi)=\irn \Big(1-\sqrt{1-|\nabla \phi|^2}\Big)\ dx - \langle\rho, \phi\rangle.
\]
Since $\rho_n \to \rho$ in $\X^*$, $\phi_{\rho_n}\rightharpoonup \bar \phi$  on $\X$ and the first term of $I$ is convex, we infer that
\[
I_\rho(\bar \phi)\le \liminf_{n\to\infty} I_{\rho_n}(\phi_{\rho_n}).
\]
Moreover, since $I_{\rho_n}(\phi_{\rho_n})\le I_{\rho_n}(\phi_\rho)$ for all $n\ge 1$, and
\[
I_\rho(\phi_\rho)=\lim_{n\to\infty} I_{\rho_n}(\phi_\rho),
\]
we have
\[
\limsup_{n\to\infty} I_{\rho_n}(\phi_{\rho_n})
\le \lim_{n\to\infty} I_{\rho_n}(\phi_\rho)
= I_\rho(\phi_\rho)
\]
and we conclude.
\end{proof}

By Theorem \ref{th:limitato} and Theorem \ref{th:convergenza}, we get the following approximation of $\phi_\rho$ as uniform limit of smooth solutions of a sequence of approximated problems. 

\begin{corollary}\label{corollario}
Let $\rho \in \X^*$ and suppose that there exist $(\rho_n)_n \subset \X^*\cap L_{\rm loc}^\infty(\RN)$ and $\tilde{\rho}\in \X^*$ such that $\rho_n \to \rho$ in $\X^*$ and $-\tilde{\rho}\le\rho_n\le \tilde{\rho}$. Then the sequence $(\phi_{\rho_n})_{n}$ of (locally strictly) spacelike solutions of \eqref{eq:BI} with $\rho_{n}$ converges to $ \phi_\rho$ weakly in $\X$ and  uniformly in $\RN$.
\end{corollary}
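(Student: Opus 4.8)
The plan is to simply combine the two previously established results. The hypotheses of Corollary \ref{corollario} are exactly those of Theorem \ref{th:convergenza}, with the additional requirement that each $\rho_n$ lies in $L^\infty_{\rm loc}(\RN)$. Hence the first step is to invoke Theorem \ref{th:convergenza} directly: it yields that $\phi_{\rho_n}\rightharpoonup\phi_\rho$ weakly in $\X$ and uniformly in $\RN$. Nothing more needs to be done for the convergence part of the statement.

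For the assertion that each $\phi_{\rho_n}$ is actually a (locally strictly) spacelike weak solution of \eqref{eq:BI} with datum $\rho_n$, the second step is to apply Theorem \ref{th:limitato} to each $\rho_n$ separately. This is legitimate precisely because we have assumed $\rho_n\in L^\infty_{\rm loc}(\RN)\cap\X^*$, which is the hypothesis of that theorem; thus the minimizer $\phi_{\rho_n}$ associated to $\rho_n$ is a weak solution of the Born-Infeld equation and is locally strictly spacelike. One should note here that $\phi_{\rho_n}$ is unambiguous: it denotes the unique minimizer of the functional with source $\rho_n$ by Proposition \ref{pr:min}, which coincides with the unique weak solution by Proposition \ref{pr:unicita}.

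The two steps together give the conclusion: $(\phi_{\rho_n})_n$ is a sequence of locally strictly spacelike weak solutions of \eqref{eq:BI} (with the respective data $\rho_n$) converging to $\phi_\rho$ weakly in $\X$ and uniformly in $\RN$. There is essentially no obstacle: the corollary is a formal juxtaposition of Theorems \ref{th:limitato} and \ref{th:convergenza}, and the only point requiring a word of care is that the approximating densities $\rho_n$ must be chosen both in $L^\infty_{\rm loc}(\RN)$ (so that Theorem \ref{th:limitato} applies) and bounded in modulus by a fixed $\tilde\rho\in\X^*$ (so that Theorem \ref{th:convergenza} applies); in practice one produces such $\rho_n$ by mollification of $\rho$, which is the motivation for the subsection's title.
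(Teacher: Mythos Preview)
Your proposal is correct and follows exactly the same route as the paper, which states the corollary as an immediate consequence of Theorem~\ref{th:limitato} and Theorem~\ref{th:convergenza} without further argument. The only minor point you gloss over is that Theorem~\ref{th:convergenza} formally assumes $\tilde\rho\ge 0$, which the corollary does not state; but this is automatic from $-\tilde\rho\le\rho_n\le\tilde\rho$, so no gap arises.
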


\begin{remark}
If $(\rho_n)_n\subset L^p(\RN)$, with  $1\le p<+\infty$, is such that $\rho_n \to\rho$ in $L^p(\RN)$, we can immediately conclude that $\phi_{\rho_n} $ converges to $ \phi_\rho$ weakly in $\X$ and uniformly in $\RN$. In particular, for a datum $\rho\in L^p(\RN)$, the approximating sequence $(\phi_{\rho_n})_{n}$, where $(\rho_{n})_{n}$ is a standard sequence of mollifications of $\rho$, is made of smooth strictly spacelike solutions of \eqref{eq:BI} with the data $\rho_{n}$.
\end{remark}

\section{The Born-Infeld-Klein-Gordon equation and other extensions}\label{sec:conclusion}

Another interesting problem which involves the Born-Infeld theory appears when we couple a field, governed by the nonlinear Klein-Gordon equation, with the electromagnetic field whose Lagrangian density is given by \eqref{eq:Bcc} or \eqref{eq:BIcc}, by means of the Weil covariant derivatives.

\medbreak

In the wake of \cite{DP,M}, Yu, in \cite{yu}, deals with the system
\beq\label{eq:yu}
\left\{
\begin{array}{ll}
\displaystyle \operatorname{div}\left(\frac{\nabla \phi}{\sqrt{1-|\nabla \phi|^2}}\right)
=u^2(\o +\phi), & x\in\mathbb{R}^3,
\\
\Delta u=\left(m^2-(\o+\phi)^2\right)u-|u|^{p-2}u, & x\in\mathbb{R}^3.
\end{array}
\right.
\eeq
Fixing $u$ in a convenient space of radial functions, Yu considers the functional 
\[
E_u(\phi)=\int_{\RT} \left[\Big(1 - \sqrt{1-|\nabla \phi|^2}\Big)+\o u^2 \phi +\frac 12 \phi^2 u^2\right]dx,
\]
and proves that $E_u$ possesses a minimizer $\phi_u$ without proving that the minimum $\phi_u$ is a critical point of $E_u$. Then the second equation of \eqref{eq:yu} is solved with $\phi_u$ in place of $\phi$. Yu's conclusion is then that  $(u,\phi_u)$ is a solution of \eqref{eq:yu} in a generalized sense, meaning that the second equation is classically satisfied while $\phi_u$ is a minimizer of $E_{u}$.

Our aim here is to show that the minimizer of $E_u$, and of similar functionals, is actually a solution of the corresponding equation. This leads us to consider more general equations of the form
\begin{equation}\label{eqf}
\left\{
\begin{array}{ll}
-\operatorname{div}\left(\dfrac{\nabla \phi}{\sqrt{1-|\nabla \phi|^2}}\right)
+f(x,\phi)=0 , & x\in\mathbb{R}^N,
\\
\phi(x)\to 0, &\hbox{as }|x|\to \infty.
\end{array}
\right.
\end{equation}
We assume that $f:\RN\times \R \to \R$ is a Carath\'eodory function such that
\begin{enumerate}[label=(f\arabic{*}), ref=f\arabic{*}]
\item \label{it:f1} there exists $p\ge 2^*-1$ such that for all $(x,t)\in \RN\times \R$ 
\[
|f(x,t)|\le C |t|^p;
\] 
\item \label{it:f2} $f(\cdot, t)$ is radially symmetric, for all $t\in \R$;
\item \label{it:f3} the functional $I_F:\X \to \R$ defined by
\[
I_F(\phi)=\irn \Big(1 - \sqrt{1-|\nabla \phi|^2}\Big) \ dx+\irn F(x,\phi)\ dx,
\]
where $F(x,t)=\displaystyle\int_0^t f(x,s)\, ds$, has a nontrivial radial local minimum $\phi_f$ in $\X$.
\end{enumerate}

\begin{remark}
The existence of a local minimum of $I_F$ in $\X$ follows, for example, by standard assumptions such as the coercivity of $I_F$ and the convexity of the function $F(x,\cdot)$.
\end{remark}

We can prove the following

\begin{theorem}\label{th:f}
Suppose that (\ref{it:f1})-(\ref{it:f3}) hold, then $\phi_f$ is a nontrivial weak solution of \eqref{eqf}. 
\end{theorem}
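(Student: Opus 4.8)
The plan is to mimic the argument used in the proof of Theorem \ref{thm:exsol} for radially distributed charges, replacing the linear term $\langle\rho,\psi\rangle$ by $\irn f(x,\phi_f)\psi\, dx$ and using hypotheses (\ref{it:f1})--(\ref{it:f2}) to control this nonlinear term. The key point, as in Section \ref{Sec:radial}, is that one cannot differentiate $I_F$ freely because of the lack of smoothness where $|\nabla\phi_f|=1$, so one must build a dense family of admissible variations which avoid the bad set. First I would check, exactly as in Proposition \ref{pr:sol-deb} (applied with the convex part $\Psi(\phi)=\irn(1-\sqrt{1-|\nabla\phi|^2})\,dx$ and the $C^1$ part $\Phi(\phi)=\irn F(x,\phi)\,dx$, which is $C^1$ on $\X$ by (\ref{it:f1}) and the embedding $\X\hookrightarrow L^{p+1}(\RN)$ from Lemma \ref{lemma21}), that since $\phi_f$ is a local minimum of $I_F$, the set $E=\{x\in\RN\mid |\nabla\phi_f|=1\}$ has zero Lebesgue measure and the one-sided variational inequality
\[
\irn\frac{|\nabla\phi_f|^2}{\sqrt{1-|\nabla\phi_f|^2}}\,dx
-\irn\frac{\nabla\phi_f\cdot\nabla\psi}{\sqrt{1-|\nabla\phi_f|^2}}\,dx
\le -\irn f(x,\phi_f)(\phi_f-\psi)\,dx
\]
holds for all $\psi\in\X$, together with $\dfrac{|\nabla\phi_f|^2}{\sqrt{1-|\nabla\phi_f|^2}}\in L^1(\RN)$.

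Next I would pass to radial coordinates: by (\ref{it:f2}) and the uniqueness in hypothesis (\ref{it:f3}) is not granted, but $\phi_f$ is assumed radial, so $\phi_f=\phi_f(r)$. Define $E_k=\{r\ge 0\mid |\phi_f'(r)|\ge 1-1/k\}$; then $|\bigcap_k E_k|=0$. For a test function $\psi\in\X_{\rm rad}\cap C_c^\infty(\RN)$ with $\supp\psi\subset[0,R]$, set $\psi_k(r)=-\int_r^{+\infty}\psi'(s)[1-\chi_{E_k}(s)]\,ds$, so that $\phi_f\pm t\psi_k\in\X$ for $|t|$ small (the gradient bound is preserved exactly as in the proof of Theorem \ref{thm:exsol}: on $E_k$ the perturbation vanishes, off $E_k$ one has room $1/k$). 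Since $\phi_f$ is a local minimum, the two-sided estimate gives
\[
\o_N\int_0^{+\infty}\frac{\phi_f'\psi'}{\sqrt{1-|\phi_f'|^2}}[1-\chi_{E_k}]\,r^{N-1}\,dr
=-\irn f(x,\phi_f)\psi_k\,dx,
\]
and then I would let $k\to\infty$: on the left, $\chi_{E_k}\to 0$ a.e. and the integrand is dominated using $\dfrac{|\phi_f'|^2}{\sqrt{1-|\phi_f'|^2}}\in L^1$ (splitting into $\{|\phi_f'|\le 1/2\}$ and its complement, as in Lemma \ref{le:cont}); on the right, $\psi_k\to\psi$ in $\D$ with $\|\nabla\psi_k\|_\infty$ bounded, and since $f(x,\phi_f)\in L^{(p+1)/p}\subset\X^*$ by (\ref{it:f1}), the right-hand side converges to $-\irn f(x,\phi_f)\psi\,dx$. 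This yields the weak formulation for all $\psi\in\X_{\rm rad}\cap C_c^\infty(\RN)$, then for all $\psi\in\X_{\rm rad}$ by the mollification/cutoff density argument and Lemma \ref{le:cont}, exactly as in Section \ref{Sec:radial}. Finally, taking $\psi=\phi_f$ (legitimate since $\phi_f$ is radial) upgrades the inequality in Proposition \ref{pr:sol-deb} to an equality, and the analogue of Remark \ref{rem:nehary} then gives the weak formulation for every $\psi\in\X$, with nontriviality coming directly from (\ref{it:f3}).

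The main obstacle I anticipate is precisely the step of extending the weak identity from radial test functions to all of $\X$: this works here only because $\phi_f$ itself is radial, so $\phi_f$ is an admissible radial test function and the ``Nehari-type'' identity $\irn\frac{|\nabla\phi_f|^2}{\sqrt{1-|\nabla\phi_f|^2}}\,dx=-\irn f(x,\phi_f)\phi_f\,dx$ can be combined with the variational inequality \eqref{eq:ineqcc}-analogue. A secondary technical point is verifying that $I_F$ is genuinely $C^1$ on $\X$ (so that the subdifferential machinery of Definition \ref{def} applies with $\Phi=\irn F\,dx$), which requires (\ref{it:f1}) together with $\X\hookrightarrow L^{p+1}(\RN)$; the growth exponent $p\ge 2^*-1$ is exactly what is needed for $F(x,\phi)$ to be integrable and for $\Phi'$ to be well-defined and continuous. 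Everything else is a faithful transcription of the radial argument already carried out for \eqref{eq:BI}.
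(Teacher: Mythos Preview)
Your proposal is correct and follows essentially the same approach as the paper's own proof, which is extremely terse: it simply says to argue as in Proposition~\ref{pr:sol-deb} (using that $\phi\mapsto\irn F(x,\phi)\,dx$ is $C^1$) to obtain the null-measure of $\{|\nabla\phi_f|=1\}$ and the one-sided inequality, and then to ``conclude as in the proof of Theorem~\ref{thm:exsol}.'' You have accurately unpacked exactly those two steps, including the radial $E_k$-truncation of test functions, the passage to the limit via dominated convergence and Lemma~\ref{le:cont}, and the final extension from $\X_{\rm rad}$ to $\X$ through the Nehari-type identity and the analogue of Remark~\ref{rem:nehary}.
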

\begin{proof}
Let $\phi_{f}\in\X$ be a local minimum of $I_{F}$. Arguing as in Proposition \ref{pr:sol-deb}, since the map 
$\phi\in \X\mapsto \irn F(x,\phi)$ is of class $C^1$, we infer that
$$\irn \frac{  |\n  \phi_{f}|^2}{ \sqrt{1- |\n \phi_{f}|^2}}\, dx + \irn f(x,\phi_{f})\phi_{f}\, dx \le 0$$
and that the set of points where $|\n  \phi|=1$ has zero measure. Then one concludes as in the proof of Theorem \ref{thm:exsol}. 
\end{proof}

\begin{remark}
Arguing as in Theorem \ref{th:f} we can complete the arguments of \cite{yu} concerning the existence of a nontrivial solution of \eqref{eq:yu}. In that precise case, one can even conclude that the solution is classical and even smooth.
\end{remark}

We finish this section by showing that, if $I_F$ has a nontrivial {\em local minimizer} in the generalized sense of Morse, i.e. the function is minimal with respect to compactly supported variations, see for example \cite{AAC}, then it is a solution of the \eqref{eqf}. Of course, any local or global minimizer is a local minimizer in the sense of Morse. We emphasize that we do not require any radial symmetry here.  
\begin{theorem}\label{th:f2}
Suppose that (\ref{it:f1}) holds and that there exists $\phi_0\in \X$ such that for any bounded open set $\O\subset \RN$, and for any $\phi\in \X$ with $\phi=\phi_0$ in $\RN\setminus \O$, $I_F(\phi_0)\le I_F(\phi)$. Then $\phi_0$ is a weak solution of \eqref{eqf}.
\end{theorem}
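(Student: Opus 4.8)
The plan is to use the Morse-type local minimality of $\phi_0$ to reduce, on every ball, to the structure theory of Bartnik and Simon \cite{BS} --- exactly as in the proof of Theorem~\ref{th:limitato} --- concluding that $\phi_0$ is locally strictly spacelike, and then to differentiate $I_F$ in the resulting smooth regime. First I would localize. Fix $R>0$ and let $\phi\in C^{0,1}(\overline{B_R})$ be weakly spacelike with $\phi=\phi_0$ on $\partial B_R$. Gluing $\phi$ on $B_R$ with $\phi_0$ on $\RN\setminus B_R$ produces a function $\tilde\phi$ which is still $1$-Lipschitz on $\RN$: if $x\in B_R$ and $y\notin B_R$, the segment $\overline{xy}$ meets $\partial B_R$ at a point $z$ where the two pieces agree, so $|\tilde\phi(x)-\tilde\phi(y)|\le|x-z|+|z-y|=|x-y|$. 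Since $\tilde\phi-\phi_0$ is Lipschitz with support in $\overline{B_R}$, we have $\tilde\phi\in\X$ and $\tilde\phi=\phi_0$ off $B_R$, hence $I_F(\phi_0)\le I_F(\tilde\phi)$; as the contributions over $\RN\setminus B_R$ coincide and are finite ($\n\phi_0\in L^2(\RN)$, while $\phi_0\in L^\infty(\RN)$ together with \eqref{it:f1} and the Sobolev embedding give $F(\cdot,\phi_0)\in L^1(\RN)$), it follows that $\phi_0|_{B_R}$ minimizes
\[
\phi\longmapsto\int_{B_R}\Big(1-\sqrt{1-|\n\phi|^2}\Big)\,dx+\int_{B_R}F(x,\phi)\,dx
\]
over $C^{0,1}_{\phi_0}(B_R)$.

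Then I would invoke Bartnik--Simon. Since $\X$ is embedded in $L^\infty(\RN)$ (Lemma~\ref{lemma21}), \eqref{it:f1} yields $f(\cdot,\phi_0)\in L^\infty(\RN)$, so $\int_{B_R}F(x,\phi)\,dx$ is a bounded, continuous, lower-order perturbation of the area term and the arguments of \cite{BS} apply as in the proof of Theorem~\ref{th:limitato}: by \cite[Corollary~4.2]{BS}, $\phi_0$ is strictly spacelike in $B_R\setminus K_R$ and satisfies $Q^{-}(\phi_0)=-f(\cdot,\phi_0)$ there, where $K_R$ is the union of the segments $\overline{xy}$ with $x,y\in\partial B_R$, $\overline{xy}\subset B_R$ and $|\phi_0(x)-\phi_0(y)|=|x-y|$, on which $\phi_0$ is affine with slope one. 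If some $K_R$ were nonempty, \cite[Theorem~3.2]{BS} would let me prolong such a light segment into arbitrarily long ones on which $\phi_0$ stays affine with slope one, contradicting the boundedness of $\phi_0$ (it vanishes at infinity by Lemma~\ref{lemma21}); this is the stretching argument already used in Theorem~\ref{th:limitato}. Hence $K_R=\emptyset$ for every $R$: the function $\phi_0$ is locally strictly spacelike --- for each $R>0$ there is $\delta_R\in(0,1)$ with $|\n\phi_0|\le 1-\delta_R$ on $B_R$ --- and $Q^{-}(\phi_0)=-f(\cdot,\phi_0)$ holds in $\RN$.

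Finally I would conclude by a one-parameter variation. Given $\psi\in C^\infty_c(\RN)$ with $\supp\psi\subset B_R$, for $|t|\le\delta_R/(2\|\n\psi\|_\infty)$ the function $\phi_0+t\psi$ belongs to $\X$ and equals $\phi_0$ off $B_R$, so $t\mapsto I_F(\phi_0+t\psi)$ has a minimum at $t=0$. On $B_R$ the integrand $1-\sqrt{1-|\n\phi_0|^2}$ is a smooth function of a gradient bounded away from $1$, and $\phi\mapsto\irn F(x,\phi)\,dx$ is $C^1$ on $\X$ by \eqref{it:f1}; differentiating at $t=0$ therefore yields
\[
\irn\frac{\n\phi_0\cdot\n\psi}{\sqrt{1-|\n\phi_0|^2}}\,dx+\irn f(x,\phi_0)\,\psi\,dx=0
\]
for all $\psi\in C^\infty_c(\RN)$. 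Interior gradient estimates for the equation $Q^{-}(\phi_0)=-f(\cdot,\phi_0)$, together with the decay of $f(\cdot,\phi_0)$ and of $\phi_0$ at infinity, give $|\n\phi_0(x)|\to0$ as $|x|\to\infty$ and hence $\frac{|\n\phi_0|^2}{\sqrt{1-|\n\phi_0|^2}}\in L^1(\RN)$; the identity then extends to every $\psi\in\X$ by the approximation argument from the end of the proof of Theorem~\ref{thm:exsol} combined with Lemma~\ref{le:cont}, so that $\phi_0$ is a weak solution of \eqref{eqf}.

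The main difficulty is the second step. The structure theory of \cite{BS} is stated for a prescribed, position-dependent mean curvature, whereas here the forcing $\irn F(x,\phi)\,dx$ depends on the unknown: once $\phi_0$ is known to be bounded this forcing is a bounded, continuous, lower-order perturbation --- harmless for the light-segment analysis, which only compares $\phi_0$ with nearby competitors --- but turning this observation into a proof requires revisiting the relevant parts of \cite{BS} rather than quoting them verbatim. The remaining ingredients (the gluing of the first step, the stretching argument, and the final variation) are routine once the locally strictly spacelike character of $\phi_0$ is in hand.
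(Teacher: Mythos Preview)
Your approach diverges from the paper's at precisely the point you flag as the main difficulty, and the gap there is real. Bartnik--Simon's structural results (\cite[Theorem~3.2 and Corollary~4.2]{BS}) are proved for the area-type functional with a lower-order term $\int_\Omega H\phi$ that is \emph{linear} in $\phi$; they are not stated for, and do not obviously extend to, a nonlinear perturbation $\int_\Omega F(x,\phi)$. You invoke both results in that nonlinear setting --- Corollary~4.2 to obtain strict spacelikeness off $K_R$ together with the equation there, and Theorem~3.2 to propagate light segments through larger balls in the stretching argument --- but neither citation is justified as written. Revisiting the proofs of \cite{BS} to accommodate a $\phi$-dependent forcing is conceivable, but it is not routine and you have not carried it out; as your proposal stands, the second step is unsupported.

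The paper sidesteps this issue entirely by \emph{freezing} the nonlinearity. One sets $\rho(x):=f(x,\phi_0(x))\in L^\infty(\RN)$ and, on each bounded $\Omega$, considers the linear-in-$\phi$ functional
\[
I_\Omega(\phi)=\int_\Omega\Big(1-\sqrt{1-|\nabla\phi|^2}\Big)\,dx+\int_\Omega\rho\,\phi\,dx,
\]
which has a unique minimizer $\psi_\Omega$ in $\X_\Omega=\{\phi\in\X:\phi=\phi_0\text{ on }\RN\setminus\Omega\}$. Bartnik--Simon now applies verbatim to $\psi_\Omega$: on a ball $B_{2R}(x_0)$ with $R$ large enough that $2\|\phi_0\|_\infty\le R/2$, any segment $\overline{xy}$ with $x,y\in\partial B_{2R}(x_0)$ meeting $B_R(x_0)$ has $|x-y|\ge R>|\phi_0(x)-\phi_0(y)|$, so $K\cap B_R(x_0)=\emptyset$ and $\psi_{R,0}$ solves $Q^-(\psi_{R,0})=-\rho$ on $B_R(x_0)$ (no stretching is needed). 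The remaining step is to identify $\psi_\Omega$ with $\phi_0$, and this is a short convexity argument: since $\phi_0$ is a Morse minimizer of $I_F$ and $J_\Omega(\phi)=\int_\Omega(1-\sqrt{1-|\nabla\phi|^2})\,dx$ is convex, plugging $(1-t)\psi_\Omega+t\phi_0$ into $I_F(\phi_0)\le I_F(\cdot)$, using $J_\Omega((1-t)\psi_\Omega+t\phi_0)\le(1-t)J_\Omega(\psi_\Omega)+tJ_\Omega(\phi_0)$, and letting $t\to1^-$ yields $I_\Omega(\psi_\Omega)\ge I_\Omega(\phi_0)$, hence $\psi_\Omega=\phi_0$ by uniqueness. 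This freezing-plus-convexity device is exactly the idea your scheme lacks; once you insert it after your localization step, Bartnik--Simon applies directly and there is no need to extend \cite{BS} at all.
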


\begin{proof}
Set $\rho(x) := f(x,\phi_0(x))$. For any $\O \subset \RN$ bounded, arguing as in the previous sections, we infer that the functional $I_\Omega$ defined by
\[
I_\Omega(\phi)=\int_\Omega \Big(1-\sqrt{1-|\nabla \phi|^2}\Big) \ dx + \int_\Omega \rho\phi \ dx
\]
has a unique minimizer $\psi_\Omega\in\X_\Omega$, where $\X_\Omega$ is the set of functions $\phi\in \X$ with $\phi=\phi_0$ in $\RN\setminus \O$.

\medbreak
\noindent For every $x_{0}\in\RN$ and $R>0$, we simply denote the minimizer $\psi_{B_{2R}(x_{0})}$ by $\psi_{R,0}$ where $B_{2R}(x_{0})$ is the ball of radius $2R$ centered at $x_{0}$. We claim that
\begin{equation}\label{I-hope-it-is-the-last-equation}
-\operatorname{div}\left(\dfrac{\nabla \psi_{R,0}}{\sqrt{1-|\nabla \psi_{R,0}|^2}}\right)
+f(x,\phi_{0})=0
\end{equation}
on the ball $B_R(x_{0})$ when $R>0$ is sufficiently large. Since $\phi_0$ is bounded, we may assume that $|\phi_0(x)-\phi_0(y)|\le R/2$ for every $x,y\in \R^N$ and therefore, as soon as $|x-y|\ge R$, we have  $|\phi_0(x)-\phi_0(y)|\le |x-y|/2$.
Arguing as Subsection \ref{se:cbg}, and keeping the same notations, we know that $\psi_{R,0}$ is strictly spacelike and solve \eqref{I-hope-it-is-the-last-equation} in $B_{2R}(x_{0})\setminus K$ where 
\[
K=\left\{\overline{xy}\subset B_{2R}(x_{0})\mid x,y\in\partial B_{2R}(x_{0}), x\neq y, |\phi_0(x)-\phi_0(y)|=|x-y|\right\}.
\]
Of course, if $K\cap B_R(x_{0})=\emptyset$, then our claim is proved. Assume by contradiction that $K\cap B_R(x_{0})\neq\emptyset$. Then there exist $x,y\in\partial B_{2R}(x_{0})$ such that $x\neq y$, $\overline{xy}\cap B_R(x_{0})\neq \emptyset$ and $|\phi_0(x)-\phi_0(y)|=|x-y|$. But it is easy to see that in such a case $|x-y|\ge R$ and so $|\phi_0(x)-\phi_0(y)|\le |x-y|/2$ which is a contradiction. 

\medbreak
\noindent We now conclude by showing that $\psi_{\Omega}=\phi_0$ whatever $\Omega\subset\RN$ which implies $\psi_{R,0}=\phi_0$ for every $x_{0}\in\RN$ and every $R>0$. This follows from a totally standard argument in convex analysis. To simplify the notations, we set
\[
J_\Omega(\phi)=\int_\Omega \Big(1-\sqrt{1-|\nabla \phi|^2}\Big) \ dx. 
\]
Since $\phi_0$ is a local minimizer in the sense of Morse for $I_F$, we deduce that for $0<t<1$, we have
$$J_{\Omega}((1-t)\psi_{\Omega} + t\phi_{0}) +  \int_{\Omega}F(x,(1-t)\psi_{\Omega}+t\phi_{0})\, dx \ge J_{\Omega}(\phi_{0}) + \int_{\Omega}F(x,\phi_0)\, dx.$$
Using the convexity of $J_{\Omega}$, we deduce that  
$$(1-t)J_\Omega(\psi_{\Omega}) + t J_\Omega(\phi_{0}) + \int_{\Omega}F(x,(1-t)\psi_{\Omega}+t\phi_{0})\, dx \ge J_\Omega(\phi_{0}) + \int_{\Omega}F(x,\phi_0)\, dx,$$
or equivalently
$$J_\Omega(\psi_{\Omega}) \ge J_\Omega(\phi_{0}) + \frac{1}{t-1}\left(\int_{\Omega}F(x,(1-t)\psi_{\Omega}+t\phi_{0})\, dx
-\int_{\Omega}F(x,\phi_0)\, dx\right).$$
This yields
$$J_\Omega(\psi_{\Omega}) \ge J_\Omega(\phi_{0}) + \int_{\Omega}f(x,\phi_0(x))(\phi_0-\psi_{\Omega})$$
and therefore 
$$I_{\Omega} (\psi_{\Omega}) \ge I_{\Omega} (\phi_{0}).$$
By uniqueness of the minimizer of $I_{\Omega}$, we conclude that $\psi_{\Omega} = \phi_{0}$ in $\Omega$.
\end{proof}

\end{document}